\newtheorem{thm}{Theorem}[section]% meant for sectionwise numbers
\newtheorem{ex}{Example}[section]%
\newtheorem{rem}{Remark}[section]%
\newtheorem{defn}{Definition}[section]%
\newtheorem{lem}{Lemma}[section]%
\newtheorem{cor}{Corollary}[section]
\begin{document}

\title[Article Title]{A $W$-weighted generalization of $\{1,2,3,1^{k}\}$-inverse for rectangular matrices}

%%=============================================================%%
%% Prefix	-> \pfx{Dr}
%% GivenName	-> \fnm{Joergen W.}
%% Particle	-> \spfx{van der} -> surname prefix
%% FamilyName	-> \sur{Ploeg}
%% Suffix	-> \sfx{IV}
%% NatureName	-> \tanm{Poet Laureate} -> Title after name
%% Degrees	-> \dgr{MSc, PhD}
%% \author*[1,2]{\pfx{Dr} \fnm{Joergen W.} \spfx{van der} \sur{Ploeg} \sfx{IV} \tanm{Poet Laureate} 
%%                 \dgr{MSc, PhD}}\email{iauthor@gmail.com}
%%=============================================================%%

\author*[1]{\fnm{Geeta} \sur{Chowdhry}}\email{geetac.207ma003@nitk.edu.in}
% \equalcont{These authors contributed equally to this work.}
\author[2]{\fnm{Falguni} \sur{Roy}}\email{royfalguni@nitk.edu.in}
% \equalcont{These authors contributed equally to this work.}

%\author[1,2]{\fnm{Third} \sur{Author}}\email{iiiauthor@gmail.com}
%\equalcont{These authors contributed equally to this work.}

\affil*[1,2]{\orgdiv{Department of Mathematical and Computational Sciences}, \orgname{National Institute of Technology Karnataka, Surathkal}, \orgaddress{\street{NH 66, Srinivasnagar 
Surathkal}, \city{Mangalore}, \postcode{575025}, \state{Karnataka}, \country{India}}}

%\affil[2]{\orgdiv{Department}, \orgname{Organization}, \orgaddress{\street{Street}, \city{City}, \postcode{10587}, \state{State}, \country{Country}}}

%\affil[3]{\orgdiv{Department}, \orgname{Organization}, \orgaddress{\street{Street}, \city{City}, \postcode{610101}, \state{State}, \country{Country}}}

\abstract{This paper presents a novel extension of the $\{1,2,3,1^{k}\}$-inverse concept to complex rectangular matrices, denoted as a $W$-weighted $\{1,2,3,1^{k}\}$-inverse (or $\{1',2',3',{1^{k}}'\}$-inverse), where the weight $W \in \mathbb{C}^{n \times m}$. The study begins by introducing a weighted $\{1,2,3\}$-inverse (or $\{1',2',3'\}$-inverse) along with its representations and characterizations. The paper establishes criteria for the existence of $\{1',2',3'\}$-inverses and extends the criteria to $\{1'\}$-inverses.
It is further demonstrated that $A\in \mathbb{C}^{m \times n}$ admits a $\{1',2',3',{1^{k}}'\}$-inverse if and only if $r(WAW)=r(A)$, where $r(\cdot)$ is the rank of a matrix. The work additionally establishes various representations for the set $A\{ 1',2',3',{1^{k}}'\}$, including canonical representations derived through singular value and core-nilpotent decompositions. This, in turn, yields distinctive canonical representations for the set $A\{ 1,2,3,{1^{k}}\}$. $\{ 1',2',3',{1^{k}}'\}$-inverse is shown to be unique if and only if it has index $0$ or $1$, reducing it to the weighted core inverse. Moreover, the paper investigates properties and characterizations of $\{1',2',3',{1^{k}}'\}$-inverses, which then results in new insights into the characterizations of the set $A\{ 1,2,3,{1^{k}}\}$.}

\keywords{Generalized inverses, Weighted generalized inverses, Core inverse, $W$-Weighted Drazin inverse, Weighted core EP-inverse, $\{1, 2, 3, 1^{m}\}$-inverse.}

%%\pacs[JEL Classification]{D8, H51}

\pacs[Mathematics Subject Classification]{15A09, 15A23, 15A24.}

\maketitle

\section{Introduction and preliminaries}\label{sec1}
Some essential generalized inverses in matrix theory are the Moore-Penrose inverse \cite{penrose1955generalized}, the Drazin inverse \cite{drazin1958pseudo}, the group inverse \cite{drazin1958pseudo}, the weighted Moore-Penrose inverse \cite{wang2018generalized}, the Bott-Duffin inverse \cite{bott1953algebra}, and the generalized  Bott-Duffin inverse \cite{yonglin1990generalized}. First, we recall the definitions of some of these well-known generalized inverses. Prior to that, some notations which will be used throughout the paper are the following. The set of all complex matrices of order $m \times n$ is denoted by $\mathbb{C}^{m \times n}$. The set of all $m \times n$ complex matrices of rank $r$ is denoted by $\mathbb{C}_r ^{m \times n}$, and $I_n$ represents the identity matrix of order $n$. For $A \in \mathbb{C}^{m \times n}$, the symbols $A^{-1}$, $A^*$, $r(A)$, $\mathcal{R}(A)$, $\mathcal{N}(A)$, will denote the inverse, conjugate transpose, rank,  range space, null space of $A$, respectively. For $A \in \mathbb{C}^{m \times n} $ following are the four Penrose equations \cite{penrose1955generalized}
\begin{equation*}
    (1) \ AXA=A,\  (2) \ XAX=X, \  (3) \ (AX)^*=AX, \  (4) \ (XA)^*=XA. 
\end{equation*}
If $X$ is a solution to the $(i), \ (j), \ldots$ equations, then it is referred to as an $\{ i,j,...\} $-inverse of $A$, and the set of all $\{i,j,...\}$-inverses of $A$ is denoted as $A\{i,j,...\}$. Moreover, the set of all $\{i,j,...\}$-inverses of $A$ having rank $s$ is denoted as $A\{i,j,...\}_s$. An element of the set $A\{i,j,...\}$ is often denoted as $A^{(i,j,...)}$. In particular, the well known Moore-Penrose inverse of $A \in \mathbb{C}^{m \times n} $ is the $\{1,2,3,4\}$-inverse of $A$, which is unique and it is denoted as $A^{\dagger}$ \cite{penrose1955generalized}. The generalized inverses satisfying equation $(2)$ of the Penrose equations are called outer inverses. However, if $X$ satisfies equation $(1)$ of the Penrose equations, then $X$ is known as an inner inverse of $A$. For any square matrix $A \in \mathbb{C}^{n \times n} $, the smallest non-negative integer $k$ such that $r(A^k)=r(A^{k+1})$ is known as the index of $A$, and it is represented by the symbol ind$(A)$. The Drazin inverse $A^D$ of $A \in \mathbb{C}^{n \times n}$ is the unique solution of the following equations \cite{drazin1958pseudo}
\begin{equation*}
     (1^k) \ XA^{k+1}=A^{k},\  (2) \ XAX=X, \  (5) \ AX=XA,
\end{equation*}
where $k \geq \text{ind}(A)$. When $k=1$, the Drazin inverse is known as the Group inverse and is denoted by $A^{\#}$.
Drazin inverse is defined only for complex square matrices. To extend it to complex rectangular matrices, Cline and Greville in \cite{cline1980drazin} introduced the $W$-weighted Drazin inverse.
For $A \in \mathbb{C}^{m \times n}$, $W \in \mathbb{C}^{n \times m}$ the unique solution $X \in \mathbb{C}^{m \times n}$ of the following equations is the $W$-weighted Drazin inverse(WDI) of $A$ if \cite{cline1980drazin}
\begin{equation*}
    ({1^{k}}') \ XW(AW)^{k+1}=(AW)^{k},\  (2') \ XWAWX=X, \  (5') \ AWX=XWA,
\end{equation*}
where $k \geq \text{max}\{\text{ind}(AW), \  \text{ind}(WA)\}=\text{ind}(A,W)$. The WDI will be denoted by $A^{D,W}$. Particularly, for $k=1$, WDI reduces to the weighted group inverse, $A^{\#,W}$. For $m=n$ and $W=I$, the WDI reduces to the Drazin inverse of $A$. Following are the two representations of the WDI \cite{cline1980drazin}
\begin{equation}
    A^{D,W}=A[(WA)^D]^2=[(AW)^D]^2A.
\end{equation}
A generalization of the Drazin inverse for bounded linear operators between Banach spaces is given in \cite{rakovcevic2002weighted}. When the Banach spaces are finite-dimensional, this gives another version of the Drazin inverse for complex rectangular matrices, which is of our interest. Let $A,B \in \mathbb{C}^{m \times n}$ and $W \in \mathbb{C}^{n \times m}$. The $W$-product \cite{rakovcevic2002weighted, ferreyra2018revisiting, ferreyra2019weak} of $A$ and $B$ is defined as $A \star B=AWB$, and the $W$-product of $A$ with itself $l$ times is denoted by $A^{\star l}$. Moreover if $\|A\|_{W}=\|A\|\|W\|$, where $\|\cdot\|$ represents any matrix norm on $\mathbb{C}^{m \times n}$, then $(\mathbb{C}^{m \times n}, \star , \|\cdot\|_W)=\mathfrak{A}$ forms a Banach algebra with $A^{\star l}= (AW)^{l-1}A=A(WA)^{l-1}, \ l \in \mathbb{N}.$ Let $A \in \mathfrak{A}$, then the $W$-Drazin inverse \cite{rakovcevic2002weighted} of $A$ is the unique solution $X \in \mathfrak{A}$ of the following equations
\begin{equation*}
    (1^{k*}) \ X \star A^{\star (k+2)} =A^{\star (k+1)},  \ (2^*) \ X \star A \star X = X, \ (5^*) \ A \star X =X \star A,
\end{equation*}
where $k$ is some non-negative integer.

\begin{rem}
    For $A,X \in \mathbb{C}^{m \times n}$ and $\ W \in \mathbb{C}^{n \times m}$, using the definition of $W$-product, system of equations $(1^{k*},2^*,5^* )$ can be rewritten as
    \begin{equation*}
        (1^{k*}) \ X(WA)^{k+2}=(AW)^kA, \ (2^*) \ XWAWX=X, \  (5^*) \ AWX=XWA.
    \end{equation*}
    Note that the $W$-weighted Drazin inverse also satisfies the system of equations $(1^{k*},2^*,5^* )$ and  the system of equations $(1^{k*},2^*,5^* )$ has a unique solution hence the $W$-Drazin inverse is same as the $W$-weighted Drazin inverse.
\end{rem}
Furthermore, Baksalary and Trenkler developed the notion of the core inverse for square matrices in \cite{baksalary2010core}.
For $A \in \mathbb{C}^{n \times n}$ and $\text{ind}(A) \leq 1$, the core inverse of $A$ is the unique matrix $X$ such that
\begin{equation*}
    AX=AA^{\dagger}, \ \mathcal{R}(X) \subseteq \mathcal{R}(A).
\end{equation*}
Core inverse exists for all matrices of index one, and it is unique. In this paper, we mainly deal with the generalizations of the core inverse. 
The core inverse is used to solve several problems involving matrix equation solvability and matrix partial ordering \cite{baksalary2010core}. 
Various generalizations of the core inverse can be found in \cite{baksalary2014generalized, malik2014new, manjunatha2014core, ferreyra2021weak, ferreyra2018revisiting, ferreyra2019weak,wu20221, behera2020further, mosic2022extensions}. 
Since the core inverse exists only for matrices of index 0 or 1, there are several possibilities for generalizations to matrices of arbitrary index \cite{baksalary2014generalized, malik2014new, manjunatha2014core, ferreyra2021weak,wu20221}.
One such generalization is the core-EP inverse, introduced in 
\cite{manjunatha2014core} for square complex matrices of any arbitrary index.
The unique matrix $X$ is the core-EP inverse of $A$ if it satisfies
\begin{equation*}
    XAX=X, \ \mathcal{R}(X)=\mathcal{R}(X^*)=\mathcal{R}(A^k),
\end{equation*}
where $k=\text{ind}(A)$ and it is denoted by $A^\text{\textcircled{$\dagger$}}$.
Ferreyra et al. \cite{ferreyra2018revisiting} extended the core-EP inverse to matrices of any order. For $A \in \mathbb{C}^{m \times n},\ W \in \mathbb{C}^{n \times m}$, the weighted core-EP inverse, $A^{ \text{\textcircled{$\dagger$},$W$}}$ is the unique solution $X$ to the following equations
\begin{equation*}
    WAWX=(WA)^k[(WA)^k]^{\dagger}, \ \mathcal{R}(X) \subseteq \mathcal{R}((AW)^k),
\end{equation*}
where $k \geq \text{ind}(A,W)$. 
An important characterization of the weighted core-EP inverse \cite{ferreyra2018revisiting} of $A \in \mathbb{C}^{m \times n}$ is as follows. $X \in \mathbb{C}^{m \times n}$ is the weighted core-EP inverse of $A$ if and only if it is $\{1^{k*}, 2'\}$-inverse and satisfies 
\begin{equation*}
    (3') \ (W A W X )^*= W A W X, \text{and}  \ \mathcal{R}(X) \subseteq \mathcal{R}((AW)^{ K} ).
\end{equation*}
When $k=1$, weighted core-EP inverse is known as the weighted core inverse $A^{ \text{\textcircled{$\#$},$W$}}$ of $A$ \cite{ferreyra2019weak}.
Recently, a generalization of the core inverse, possessing the $\{1\}$-inverse property, is introduced for square matrices of arbitrary index by C. Wu and J. Chen in \cite{wu20221}.
Let $A \in \mathbb{C}^{n \times n}$, then $\{1, 2, 3, 1^{k}\}$-inverse \cite{wu20221} of $A$ is defined as an element of the set $A\{1,2,3,1^{k} \}$, where $k$ is a non-negative integer and is denoted by $A^{\text{{\textcircled{$-$}}}}$.
Also, $X$ is a $\{1, 2, 4, {}^{k}1\}$-inverse of $A$ if it satisfies the following equations \cite{wu20221}
\begin{equation*}
     X \in A\{1,2,4\}, \  \text{and} \  ({}^{k}1) \ A^{k+1}X=A^k,
\end{equation*}
where $k$ is a non-negative integer. This is the only generalization of the core inverse of $A$ among \cite{baksalary2014generalized, malik2014new, manjunatha2014core, ferreyra2021weak}, which is also a $\{1\}$-inverse or inner inverse of $A$. C. Wu and J. Chen \cite{wu20221} characterized the set $A\{1,2,3,1^k\}$ and gave a canonical representation of the elements of the set. \\
Our work primarily focuses on the generalizations of $\{1, 2, 3, 1^{k}\}$-inverse. Since $\{1, 2, 3, 1^{k}\}$-inverse of $A$ is defined for square matrices, a generalization of the $\{1, 2, 3, 1^{k}\}$-inverse for complex rectangular matrices is introduced and studied in this paper. Recently, weighted inner inverses for complex rectangular matrices were defined in \cite{behera2022weighted}. $X \in \mathbb{C}^{m \times n}$ is weighted inner inverse of $A \in \mathbb{C}^{m \times n}$ if it satisfies the equation $(1') \ AWXWA=A$. Motivated by the definitions of weighted inner inverse \cite{behera2022weighted}, $W$-weighted Drazin inverse \cite{cline1980drazin} and weighted core-EP inverse \cite{manjunatha2014core}, 
an extension of $\{ 1,2,3,1^{k}\}$-inverse is defined for $A\in \mathbb{C}^{m \times n}$ as $W$-weighted $\{ 1,2,3,1^{k}\}$-inverse using the weight $W \in \mathbb{C}^{n \times m}$, which is denoted as $\{ 1',2',3',{1^{k}}'\}$-inverse. It is shown that any $A \in \mathbb{C}^{m \times n}$ have a $\{ 1',2',3',{1^{k}}'\}$-inverse if and only if $r(WAW)=r(A)$. The sets $A\{ 1',2',3'\}$ and $A\{ 1',2',3',{1^{k}}'\}$ are completely characterized. Canonical representations of the elements of the set $A\{ 1',2',3',{1^{k}}'\}$ are obtained using the core-nilpotent and the singular value decompositions. A generalization of the core-nilpotent decomposition can be seen in \cite{KMP2021gen}. $\{ 1',2',3',{1^{k}}'\}$-inverse of $A$ is unique if and only if it has index $0$ or $1$ and in that case it reduces to $A^{ \text{\textcircled{$\#$},$W$}}$. Some properties and characterizations of $\{ 1',2',3',{1^{k}}'\}$-inverse are also obtained.\\
This paper is organized as follows. Section \ref{sec2} contains definitions of $\{1',2',3'\}$-inverse and $\{1',2',4'\}$-inverse along with various characterizations and representations of the sets $A\{1',2',3'\}$ and $A\{1',2',4'\}$. In section \ref{sec3}, $\{1',2',3', {1^{k}}'\}$-inverse of $A$ is introduced. Existence criteria, characterizations, and representations of the set  $A\{1',2',3', {1^{k}}' \}$ are obtained using the results from Section \ref{sec2}. Conditions for uniqueness of $\{1',2',3',{1^{k}}'\}$-inverse are also obtained. Additionally, two canonical forms for elements of $A\{1',2',3',{1^{k}}'\}$ are obtained using the singular value and the core-nilpotent decompositions of matrices. Lastly, Section \ref{sec4} contains some properties and characterizations of $\{1',2',3', {1^{k}}'\}$-inverse of $A$.

\section{$\{1',2',3' \}$ and $\{1',2',4' \}$-inverses}\label{sec2}
In this section, the sets $A\{1',2',3'\}$ and $A\{1',2',4'\}$ are defined and investigated. The criteria for non-emptiness, various characterizations, and representations of these sets are also obtained.

\begin{defn} \label{def1}
Let $A \in \mathbb{C}^{m \times n}, \ W \in \mathbb{C}^{n \times m}$. Then $X \in \mathbb{C}^{m \times n}$ is defined as a $\{1',2',3'\}$-inverse of $A$ if it satisfies the following equations
\begin{equation*}
    (1') AWXWA=A, \ (2') XWAWX=X, \   (3')(WAWX)^*=WAWX.
\end{equation*}
\end{defn}
\begin{defn}
Let $A \in \mathbb{C}^{m \times n}, \ W \in \mathbb{C}^{n \times m}$. Then $X \in \mathbb{C}^{m \times n}$ is defined as a $\{1',2',4'\}$-inverse of $A$ if it satisfies the equations
\begin{equation*}
    (1') AWXWA=A, \ (2') XWAWX=X, \   (4')(XWAW)^*=XWAW.
\end{equation*}
\end{defn}
The existence criteria for $\{1',2',3'\}$-inverses of $A \in \mathbb{C}^{m \times n}$ is given by the following theorem.
\begin{thm}
 \label{1,2,3 non empty}
Let $A \in \mathbb{C}^{m \times n}, \ W \in \mathbb{C}^{n \times m}$, then $A\{1',2',3'\} \neq \emptyset$ if and only if $r(WAW)=r(A)$.
\end{thm}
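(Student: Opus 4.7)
The statement is an iff, so I would prove the two directions separately, with the forward direction being routine rank manipulation and the reverse direction requiring an explicit construction.

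\textbf{Forward direction.} Assume $X \in A\{1',2',3'\}$. I would extract the rank condition purely from $(1')$. Reading $A = AWXWA$ as $A = (AW)(XWA)$, I get $r(A) \leq r(AW) \leq r(A)$, so $r(AW) = r(A)$. Next, right-multiplying $(1')$ by $W$ gives $AW = AWXWAW$, hence $r(AW) \leq r(XWAW) \leq r(WAW)$. Combined with the trivial $r(WAW) \leq r(A)$, this forces $r(WAW) = r(A)$. So equations $(2')$ and $(3')$ are not even needed here.

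\textbf{Reverse direction.} Assume $r(WAW) = r(A)$. The chain $r(WAW) \leq r(WA) \leq r(A)$ and $r(WAW) \leq r(AW) \leq r(A)$ forces $r(WA) = r(AW) = r(A)$. These rank equalities are equivalent to the two key containments
\begin{equation*}
    \mathcal{R}(AW) = \mathcal{R}(A) \quad \text{and} \quad \mathcal{N}(W) \cap \mathcal{R}(A) = \{0\}.
\end{equation*}
My candidate for $X$ is simply any ordinary $\{1,2,3\}$-inverse $Y$ of the square-ish matrix $WAW$; for concreteness one can take $Y = (WAW)^{\dagger}$, which always exists and lies in $(WAW)\{1,2,3\}$. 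Setting $X = Y$, conditions $(2')$ and $(3')$ are automatic: $XWAWX = YWAWY = Y = X$ and $(WAWX)^* = (WAWY)^* = WAWY = WAWX$.

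\textbf{The one nontrivial step is $(1')$.} I would exploit the two containments above in two stages. From $WAWYWAW = WAW$, I deduce $W(AWYWAW - AW) = 0$, i.e.\ the columns of $AWYWAW - AW$ lie in $\mathcal{N}(W)$. But they also lie in $\mathcal{R}(AW) = \mathcal{R}(A)$, and $\mathcal{N}(W) \cap \mathcal{R}(A) = \{0\}$ then forces
\begin{equation*}
    AWYWAW = AW.
\end{equation*}
Now using $\mathcal{R}(A) \subseteq \mathcal{R}(AW)$, write $A = AWM$ for some $M$. Then $AWYWA = AWYWAW\cdot M = AW\cdot M = A$, which is exactly $(1')$.

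The main obstacle, as expected, is promoting the $\{1\}$-inverse identity for $WAW$ up to the $(1')$ identity for $A$; the forward direction and the verification of $(2'),(3')$ are automatic, so the whole proof really hinges on translating $r(WAW)=r(A)$ into the two range/null-space conditions that cancel the outer $W$'s one at a time.
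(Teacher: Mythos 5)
Your proposal is correct and follows essentially the same route as the paper: both directions reduce to picking a $\{1,2,3\}$-inverse of $WAW$ and using the rank equalities $r(AW)=r(WA)=r(A)$ to cancel the outer $W$'s and recover $(1')$ (the paper phrases the left cancellation via a factorization $A=U^*WA$ where you use $\mathcal{N}(W)\cap\mathcal{R}(A)=\{0\}$, which is the same fact). The only cosmetic difference is in the forward direction, where the paper squeezes $r(X)\leq r(WAW)\leq r(A)\leq r(X)$ using $(1')$ and $(2')$ together, while you extract the conclusion from $(1')$ alone.
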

\begin{proof}
Firstly, assume $A\{1',2',3'\} \neq \emptyset$. Then for $X \in A\{1',2',3'\} $, $r(X) \leq r(WAW) \leq r(A)\leq r(X)$. Conversely, if $r(A) = r(WAW)$ then $\mathcal{R}(A^*)=\mathcal{R}((WA)^*)$. So, there exists $U \in \mathbb{C}^{n \times m} $ such that $A^*=(WA)^*U$, which gives $A=U^*WA$. Also, $r(A) = r(WAW)$ gives $\mathcal{R}(AW)=\mathcal{R}(A)$, which implies $A=AWV$ for some $ V \in \mathbb{C}^{m \times n}$. Since $(WAW)\{1,2,3\} \neq \emptyset$,  there exists $X \in (WAW)\{1,2,3\}$ satisfying $XWAWX=X$ and $(WAWX)^*=WAWX$. Now
\begin{equation*}
    \begin{split}
    AWXWA =U^*WAWXWAWV= U^*WAWV=AWV=A.
    \end{split}
\end{equation*}
Hence, $X \in A\{1',2',3'\}$.
\end{proof}

\begin{cor} \label{A(1,2,3)=waw(1,2,3)}
    Let $A \in \mathbb{C}^{m \times n}, \ W \in \mathbb{C}^{n \times m}$ such that $r(WAW)=r(A)$. Then $X \in A\{1',2',3'\}$ if and only if $X \in (WAW)\{1,2,3\}$. 
\end{cor}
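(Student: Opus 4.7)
The proof plan is to exploit the fact that equations $(2')$ and $(3')$ in Definition \ref{def1} are literally equations $(2)$ and $(3)$ applied to the matrix $WAW$: indeed, $XWAWX = X$ is $X(WAW)X = X$, and $(WAWX)^* = WAWX$ is $((WAW)X)^* = (WAW)X$. So the entire content of the corollary reduces to showing that the two inner-inverse conditions, $AWXWA = A$ (that is, $(1')$ for $A$) and $(WAW)X(WAW) = WAW$ (that is, $(1)$ for $WAW$), are equivalent under the rank hypothesis together with the other two equations.

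For the forward direction, if $X \in A\{1',2',3'\}$, I would multiply $AWXWA = A$ on the left and on the right by $W$ to obtain $WAWXWAW = WAW$ directly, with no use of the rank hypothesis. Conditions $(2)$ and $(3)$ for $WAW$ are already supplied by $(2')$ and $(3')$, so $X \in (WAW)\{1,2,3\}$ follows immediately.

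For the converse, I would reuse the factorization technique already developed in the proof of Theorem \ref{1,2,3 non empty}. From $r(WAW) = r(A)$ one extracts $\mathcal{R}(A^*) = \mathcal{R}((WA)^*)$ and $\mathcal{R}(A) = \mathcal{R}(AW)$, producing $U \in \mathbb{C}^{n \times m}$ and $V \in \mathbb{C}^{m \times n}$ with $A = U^*WA$ and $A = AWV$. Given $X \in (WAW)\{1,2,3\}$, substituting these factorizations gives
\[
AWXWA \;=\; U^*WAWXWAWV \;=\; U^*(WAW)V \;=\; AWV \;=\; A,
\]
so $(1')$ holds. As before, $(2')$ and $(3')$ transfer directly from $(2)$ and $(3)$ for $WAW$.

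The only step that is not purely formal is the converse direction, and even there the real work, extracting $U$ and $V$ from the rank equality, has already been carried out once in Theorem \ref{1,2,3 non empty}, so no new obstacle arises; the corollary is essentially a repackaging of that argument, stripped of the existence-of-$X$ assertion.
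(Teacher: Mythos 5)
Your proposal is correct and follows essentially the same route as the paper, which simply defers to the computation in the proof of Theorem \ref{1,2,3 non empty}: the converse direction is exactly the factorization argument with $A=U^{*}WA$ and $A=AWV$, and the forward direction is the trivial left-and-right multiplication by $W$ together with the observation that $(2')$ and $(3')$ coincide with $(2)$ and $(3)$ for $WAW$. If anything, you spell out the easy forward inclusion more explicitly than the paper does.
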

\begin{proof}
    The proof is direct from the proof of Theorem $\ref{1,2,3 non empty}$.
\end{proof}
Considering $\{1',2',4'\}$-inverses instead of $\{1',2',3'\}$-inverses in the proof of the Theorem $\ref{1,2,3 non empty}$, the existence criteria for $\{1',2',4'\}$-inverses for $A\in \mathbb{C}^{m \times n}$ is given by the following corollary.
\begin{cor}
    Let $A \in \mathbb{C}^{m \times n}, \ W \in \mathbb{C}^{n \times m}$, then $A\{1',2',4'\} \neq \emptyset$ if and only if $r(WAW)=r(A)$.
\end{cor}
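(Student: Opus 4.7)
The plan is to simply run the proof of Theorem \ref{1,2,3 non empty} again with the obvious change: replace the Penrose equation $(3)$ by $(4)$ throughout, and exploit the fact that $(WAW)\{1,2,4\}$ is always non-empty.

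First I would handle the ``only if'' direction exactly as before: if $X \in A\{1',2',4'\}$, then the equation $(1')\; AWXWA = A$ forces $r(A) \le r(WAW)$, while trivially $r(WAW) \le r(A)$. Hence $r(WAW) = r(A)$. This half does not even use $(4')$.

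For the ``if'' direction, assume $r(WAW) = r(A)$. As in the proof of Theorem \ref{1,2,3 non empty}, this gives $\mathcal{R}(A^*) = \mathcal{R}((WA)^*)$ and $\mathcal{R}(AW) = \mathcal{R}(A)$, so we can pick $U \in \mathbb{C}^{n\times m}$ and $V \in \mathbb{C}^{m\times n}$ with $A = U^*WA$ and $A = AWV$. Since $(WAW)\{1,2,4\}$ is non-empty (every complex matrix admits a $\{1,2,4\}$-inverse), choose $X \in (WAW)\{1,2,4\}$. Then $XWAWX = X$, so $(2')$ holds, and $(XWAW)^* = XWAW$, so $(4')$ holds. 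For $(1')$ the same chain as in Theorem \ref{1,2,3 non empty} works verbatim:
\begin{equation*}
AWXWA = U^*WAWXWAWV = U^*WAWV = AWV = A.
\end{equation*}
Thus $X \in A\{1',2',4'\}$.

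There is no real obstacle; the argument is symmetric to the $\{1',2',3'\}$ case because the hypothesis $r(WAW) = r(A)$ simultaneously controls both the row and column spaces of $A$ relative to $WA$ and $AW$, and the Hermitian equation $(4')$ is never used in verifying $(1')$. The only minor bookkeeping point is to remember that the roles of $(3)$ and $(4)$ are interchangeable here: both $\{1,2,3\}$-inverses and $\{1,2,4\}$-inverses of $WAW$ always exist, so the construction of $X$ goes through unchanged.
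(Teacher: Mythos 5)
Your proposal is correct and is exactly the route the paper takes: the paper derives this corollary by stating that the proof of Theorem~\ref{1,2,3 non empty} goes through verbatim with $\{1',2',4'\}$ in place of $\{1',2',3'\}$, which is precisely what you carry out (the key computation $AWXWA=U^*WAWXWAWV=A$ never touches the Hermitian condition, so swapping $(3')$ for $(4')$ costs nothing). No gaps.
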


Following is an example to verify the equivalence of $\{1',2',3'\}$-inverse and $(WAW)\{1,2,3\}$-inverse. Furthermore, in the case $\{1',2',3'\}$-inverse exists, it is shown that it may not be unique.
\begin{ex} 
Take $W=\begin{bmatrix}
    1 & 0\\
    0 & 0
\end{bmatrix}$, $A=\begin{bmatrix}
    0 &0 \\
    1 & 1
\end{bmatrix}$, $B=\begin{bmatrix}
    1 & 1\\
    0 & 0
\end{bmatrix}$. Then $WAW=0$ (matrix with all entries $0$), hence $r(WAW)\neq r(A)$. 
It is well-known that $$ X=((WAW)^*WAW)^{(1)}(WAW)^*$$ is $\{1,2,3\}$-inverse of $WAW$. 
Here $ X=((WAW)^*WAW)^{(1)}(WAW)^*=0$. Clearly $X=0$ doesn't satisfy equation $(1{'})$, hence $X \notin A\{1',2',3'\}$. Whereas $WBW=\begin{bmatrix}
    1 & 0\\
    0 & 0
\end{bmatrix}$, hence $r(WBW)=r(B)=1$. Then, $\displaystyle$ $Y=((WBW)^*WBW)^{(1)}(WBW)^*=\begin{bmatrix}
    1 & 0\\
    a & 0
\end{bmatrix},$
$\forall a \in \mathbb{C}$. Therefore, $Y\in B\{1',2',3'\}$ is not unique.
\end{ex}

The next result gives a representation and characterization of the set of weighted inner inverses of a complex rectangular matrix. To accomplish this, the following theorem is required.
\begin{thm} \cite{ben2003generalized} \label{AXB=D}
    Let $A \in \mathbb{C}^{m \times n}, \ B \in \mathbb{C}^{p \times q}, \ D \in \mathbb{C}^{m \times q}$. Then the matrix equation $AXB=D$ is consistent if and only if, for some $A^{(1)}, B^{(1)}$, $AA^{(1)}DB^{(1)}B=D$ holds, in which case the general solution is
    \begin{equation*}
        X=A^{(1)}DB^{(1)}+Y-A^{(1)}AYBB^{(1)},
    \end{equation*}
    where $Y \in \mathbb{C}^{n \times p}$ is arbitrary.  
\end{thm}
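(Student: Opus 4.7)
The plan is to handle the consistency characterization and the general-solution formula in two stages, exploiting the defining property $AA^{(1)}A=A$ and $BB^{(1)}B=B$ of inner inverses throughout.

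For the \emph{consistency} part, I would prove both directions directly. For necessity, suppose $AXB=D$ has a solution $X_{0}$. Then for any fixed choice of inner inverses $A^{(1)}$ and $B^{(1)}$,
\begin{equation*}
AA^{(1)}DB^{(1)}B = AA^{(1)}(AX_{0}B)B^{(1)}B = (AA^{(1)}A)X_{0}(BB^{(1)}B) = AX_{0}B = D.
\end{equation*}
For sufficiency, if the identity $AA^{(1)}DB^{(1)}B=D$ holds for some specific choice of inner inverses, then $X_{0}:=A^{(1)}DB^{(1)}$ is an explicit solution because $AX_{0}B=AA^{(1)}DB^{(1)}B=D$. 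This gives the "if and only if'' characterization.

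For the \emph{general-solution formula}, I would first verify that every matrix of the stated form is a solution. Writing $X = A^{(1)}DB^{(1)} + Y - A^{(1)}AYBB^{(1)}$ and plugging into $AXB$,
\begin{equation*}
AXB = AA^{(1)}DB^{(1)}B + AYB - (AA^{(1)}A)Y(BB^{(1)}B) = D + AYB - AYB = D,
\end{equation*}
using consistency for the first term and the inner-inverse identities for the third. Then I would show every solution is captured by the formula: given any solution $X$, set $Y:=X$. The right-hand side becomes
\begin{equation*}
A^{(1)}DB^{(1)} + X - A^{(1)}AXBB^{(1)} = A^{(1)}(AXB)B^{(1)} + X - A^{(1)}(AXB)B^{(1)} = X,
\end{equation*}
so $X$ indeed lies in the parametrized family. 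Varying $Y$ over $\mathbb{C}^{n\times p}$ therefore sweeps out exactly the solution set.

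There is no serious obstacle here; the whole argument is a short manipulation powered by $AA^{(1)}A=A$ and $BB^{(1)}B=B$. The only point to be careful about is that the consistency criterion is stated for \emph{some} choice of inner inverses, while once consistency holds the identity $AA^{(1)}DB^{(1)}B=D$ in fact holds for \emph{every} such choice — this is implicit in the necessity direction and is what lets the general-solution formula be written with an arbitrary but fixed pair $A^{(1)}, B^{(1)}$.
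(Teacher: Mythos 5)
Your proof is correct and complete: it is precisely the classical argument (due to Penrose, and reproduced in the cited Ben-Israel--Greville reference) based solely on the identities $AA^{(1)}A=A$ and $BB^{(1)}B=B$. The paper itself states this result only as a cited theorem and gives no proof, so there is nothing to compare against beyond noting that your necessity computation, the explicit particular solution $A^{(1)}DB^{(1)}$, and the substitution $Y:=X$ to recover an arbitrary solution are exactly the standard steps; your closing remark that the criterion then holds for \emph{every} choice of inner inverses is also accurate and worth making explicit.
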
 
\begin{thm} \label{thm2.3}
    Let $A \in \mathbb{C}^{m \times n}, \ W \in \mathbb{C}^{n \times m}$. Then the set $A\{1'\}$ is nonempty if and only if $r(AW)=r(WA)=r(A)$. If $A\{1'\} \neq \emptyset $, then
    \begin{equation*}
        A\{1'\}=\{(AW)^{(1)}A(WA)^{(1)}+Y-(AW)^{(1)}AWYWA(WA)^{(1)} \mid Y\in \mathbb{C}^{m \times n} \}.
    \end{equation*}
\end{thm}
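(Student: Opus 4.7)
The plan is to reduce the defining equation $AWXWA=A$ to a matrix equation of the form $\widetilde{A}X\widetilde{B}=D$ and then invoke Theorem \ref{AXB=D} directly with $\widetilde{A}=AW$, $\widetilde{B}=WA$, $D=A$. The stated formula for $A\{1'\}$ is then exactly the general-solution formula read off from Theorem \ref{AXB=D}, so the only real work is to establish the biconditional for non-emptiness.

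For the forward direction, suppose $X \in A\{1'\}$. From $A=AWXWA$, the standard rank inequality $r(PQ)\le \min\{r(P),r(Q)\}$ applied twice yields $r(A)\le r(AW)\le r(A)$ and $r(A)\le r(WA)\le r(A)$, which immediately gives $r(AW)=r(WA)=r(A)$. This part is essentially a one-line observation.

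For the converse, the main step is to verify the Penrose--type consistency condition of Theorem \ref{AXB=D}, namely
\begin{equation*}
(AW)(AW)^{(1)}\,A\,(WA)^{(1)}(WA)=A,
\end{equation*}
using only the rank hypotheses. The rank equality $r(AW)=r(A)$, together with the containment $\mathcal{R}(AW)\subseteq \mathcal{R}(A)$, forces $\mathcal{R}(AW)=\mathcal{R}(A)$, so $A=(AW)V_1$ for some $V_1\in\mathbb{C}^{m\times n}$; hence $(AW)(AW)^{(1)}A=(AW)(AW)^{(1)}(AW)V_1=(AW)V_1=A$. Analogously, $r(WA)=r(A)$ combined with $\mathcal{R}((WA)^*)\subseteq \mathcal{R}(A^*)$ gives $\mathcal{R}((WA)^*)=\mathcal{R}(A^*)$, so $A=V_2^*(WA)$ for some $V_2$, which yields $A(WA)^{(1)}(WA)=V_2^*(WA)(WA)^{(1)}(WA)=V_2^*(WA)=A$. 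Chaining these two identities proves the consistency condition. Theorem \ref{AXB=D} then both confirms that $A\{1'\}\neq\emptyset$ and delivers the claimed parametrization
\begin{equation*}
X=(AW)^{(1)}A(WA)^{(1)}+Y-(AW)^{(1)}(AW)Y(WA)(WA)^{(1)},\qquad Y\in\mathbb{C}^{m\times n}.
\end{equation*}

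The only genuinely delicate point is the range-inclusion argument used to pass from the rank equalities to the factorizations $A=(AW)V_1$ and $A=V_2^*(WA)$; everything else is bookkeeping around Theorem \ref{AXB=D}. I would therefore structure the proof as: (i) forward rank chain, (ii) range/factorization lemma from the rank equalities, (iii) verification of the $AA^{(1)}DB^{(1)}B=D$ hypothesis, (iv) citation of Theorem \ref{AXB=D} to extract both existence and the parametrized solution set.
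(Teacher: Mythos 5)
Your proposal is correct and follows exactly the paper's route: the paper's entire proof is the one-line instruction to substitute $AW$, $WA$, $A$ for $A$, $B$, $D$ in Theorem \ref{AXB=D}. You additionally supply the details the paper leaves implicit (the forward rank chain and the verification that the rank equalities imply the consistency condition $(AW)(AW)^{(1)}A(WA)^{(1)}(WA)=A$ via the range factorizations), and these details are all sound.
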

\begin{proof}
    The proof follows by substituting $AW$ for $A$, $WA$ for $B$ and $A$ for $D$ in Theorem $\ref{AXB=D}$.
\end{proof}
\begin{rem} \label{rem2.1}
    Note that since $r(WAW)=r(A)$ is equivalent to $r(AW)=r(WA)=r(A)$ by Frobenius inequality \cite{horn2012matrix}, the set $A\{1'\}$ is nonempty if and only if the set $A\{1',2',3'\}$ is nonempty.
\end{rem}

The following result provides an improved version of Corollary 2.1 in \cite{behera2022weighted}.
% \begin{thm}
%     Let $A \in \mathbb{C}^{m \times n}$, $B \in \mathbb{C}^{m \times s}$, $c \in \mathbb{C}^{n \times k}$ and $W \in \mathbb{C}^{n \times m}$. Then $A^{(1,W)}_{(B,C)}$ exists if and only if $(WAW)^{1}_{(B,C)}$ exists and $r(WAW)=r(A)$. In this case $$A^{(1,W)}_{(B,C)}=(WAW)^{(1)}_{(B,C)}.$$
% \end{thm}
\begin{thm}
    Let $A \in \mathbb{C}^{m \times n}$, $B \in \mathbb{C}^{m \times s}$, $c \in \mathbb{C}^{n \times k}$ and $W \in \mathbb{C}^{n \times m}$. Then $X$ a weighted inner inverse of $A$ satisfying $R(XWAW)=R(B)$ and $R(I_n-WAWX)=R(C)$ exists if and only if $Y$ an inner inverse of $WAW$ satisfying $R(XWAW)=R(B)$ and $R(I_n-WAWX)=R(C)$ exists and $r(WAW)=r(A)$. In this case, $X=Y$.
\end{thm}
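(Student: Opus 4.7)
The claim is essentially that, under the rank equality $r(WAW)=r(A)$, weighted inner inverses of $A$ coincide with ordinary inner inverses of $WAW$, so I would split the biconditional into the two usual directions and note that the range conditions on $B$ and $C$ are preserved automatically because the matrix $X=Y$ is the same on both sides.

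For the forward direction, suppose $X$ satisfies $AWXWA=A$ together with the two range conditions. Multiplying this identity on the left and right by $W$ immediately gives $WAWXWAW=WAW$, so $X$ is an inner inverse of $WAW$; the range conditions are unchanged since they already involve $WAW$ and $WAWX$. For the rank equality, I would use the chain $r(A)=r(AWXWA)\le r(WA)\le r(A)$ (and the symmetric chain for $AW$) to get $r(WA)=r(AW)=r(A)$, and then invoke the Frobenius inequality as in Remark \ref{rem2.1} to conclude $r(WAW)=r(A)$.

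For the converse, suppose $Y$ is an inner inverse of $WAW$ satisfying the range conditions, and $r(WAW)=r(A)$. The goal is to upgrade $YWAWY\cdot\text{(}\ldots\text{)}$ — actually, just to show $AWYWA=A$. Here I would reuse the construction inside the proof of Theorem \ref{1,2,3 non empty}: from $r(WAW)=r(A)$ we get $\mathcal{R}(A^\ast)=\mathcal{R}((WA)^\ast)$ and $\mathcal{R}(AW)=\mathcal{R}(A)$, which yield matrices $U,V$ with $A=U^\ast WA$ and $A=AWV$. Then
\begin{equation*}
AWYWA \;=\; U^\ast WAWYWAWV \;=\; U^\ast WAWV \;=\; AWV \;=\; A,
\end{equation*}
so $Y$ is a weighted inner inverse of $A$. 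Since $X=Y$, the range conditions transfer verbatim.

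The only mildly subtle point — and the one most likely to need care when writing out the argument cleanly — is the handling of the two range conditions: one must confirm that these conditions depend only on the products $XWAW$ and $WAWX$, which remain unchanged under the identification $X=Y$, so no additional verification is needed beyond the inner-inverse equivalence. Everything else is a direct application of results already established in Section \ref{sec2}.
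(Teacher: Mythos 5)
Your proof is correct and follows essentially the same route as the paper, which simply cites Theorem \ref{thm2.3} and Remark \ref{rem2.1}: both arguments reduce the weighted inner-inverse condition on $A$ to the ordinary inner-inverse condition on $WAW$ via the rank equality $r(WAW)=r(A)$ (your Frobenius-inequality chain is exactly Remark \ref{rem2.1}, and your $U^{\ast},V$ factorization is the device already used in the proof of Theorem \ref{1,2,3 non empty}). Your write-up is in fact more explicit than the paper's one-line citation, and your observation that the range conditions involve only $XWAW$ and $WAWX$, hence transfer verbatim under $X=Y$, is the right way to dispose of that part of the statement.
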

\begin{proof}
    The proof follows from Theorem \ref{thm2.3} and Remark \ref{rem2.1}.
\end{proof}

\noindent The next theorem gives two full-rank representations of the sets $A \{ 1', 2', 3' \} $ and $A \{ 1', 2', 4' \} $. This can be done by using the following lemma, which gives the full-rank representations of $\{2,3\}, \ \{2,4\}, \ \{1,2,3\}$ and $\{1,2,4\}$-inverses.
\begin{lem} \cite{stanimirovic2011full} \label{222}
    Let $A \in \mathbb{C}^{m \times n}_r$ and $s$ be an integer such that $0 < s \leq r$. Suppose that there exist two arbitrary matrices $V$ and $U$ such that $r(VA)$=$r(V)$ and $r(AU)$=$r(U)$. Then,
\begin{enumerate}
    \item[(i)] $A\{2,3\}_s=\{U((AU)^{*}AU)^{-1}(AU)^{*} : U \in \mathbb{C}^{n \times s}_s \}$,
    \item[(ii)] $A\{2,4\}_s=\{(VA)^{*}(VA(VA)^{*})^{-1}V : V \in \mathbb{C}^{s \times m}_s\}$,
    \item[(iii)] $A\{1,2,3\}=\{U((AU)^{*}AU)^{-1}(AU)^{*} : U \in \mathbb{C}^{n \times r}_r \}=A\{2,3\}_r$,
    \item[(iv)] $A\{1,2,4\}=\{(VA)^{*}(VA(VA)^{*})^{-1}V : V \in \mathbb{C}^{r \times m}_r\}=A\{2,4\}_r$.
\end{enumerate}
\end{lem}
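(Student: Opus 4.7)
The plan is to treat (i) and (ii) as the substantive cases, since (iii) and (iv) will then follow by a rank comparison, and to prove each by two inclusions. For the forward inclusion in (i), I would start with $X = U((AU)^{*}AU)^{-1}(AU)^{*}$ and note that the hypothesis $r(AU) = r(U) = s$ makes the Gram matrix $(AU)^{*}AU \in \mathbb{C}^{s \times s}$ invertible. A direct computation shows that $AX = AU((AU)^{*}AU)^{-1}(AU)^{*}$ is the orthogonal projector onto $\mathcal{R}(AU)$: it is visibly Hermitian and idempotent. Hermiticity is exactly equation $(3)$, while $(AX)^{2} = AX$ multiplied on the left by $X$ gives $XAX = X$, i.e.\ $(2)$. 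The rank count $r(X) = r(AX) = r(AU) = s$ then places $X$ in $A\{2,3\}_{s}$.

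For the reverse inclusion, given $X \in A\{2,3\}_{s}$, I would take a full-rank factorization $X = UV$ with $U \in \mathbb{C}^{n \times s}_{s}$ and $V \in \mathbb{C}^{s \times m}_{s}$. The outer property $X = XAX$ gives $r(AX) = r(X) = s$, and since $\mathcal{R}(AX) \subseteq \mathcal{R}(AU)$ with $r(AU) \le s$, one obtains $r(AU) = s$, so $U$ is admissible in the formula. Because $AX$ is an orthogonal projector with range $\mathcal{R}(AU)$, it must coincide with the projector $AU((AU)^{*}AU)^{-1}(AU)^{*}$. This yields $AUV = AU((AU)^{*}AU)^{-1}(AU)^{*}$. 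The critical step is now to cancel $A$ on the left: since $r(AU) = r(U)$, the map induced by $A$ on $\mathcal{R}(U)$ is injective, so $UV = U((AU)^{*}AU)^{-1}(AU)^{*}$, which is $X$ in the required form.

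Part (ii) is entirely parallel, working with $VA$, the Gram matrix $VA(VA)^{*}$, and the row space of $X$; the projector involved is now $XA$ and the cancellation is performed on the right. For (iii), I would show $A\{1,2,3\} \subseteq A\{2,3\}_{r}$ by observing that $AXA = A$ forces $r(X) \ge r(A) = r$, while any $\{2\}$-inverse satisfies $r(X) \le r(A)$. Conversely, for $X \in A\{2,3\}_{r}$, the orthogonal projector $AX$ has $r(AX) = r$ and $\mathcal{R}(AX) \subseteq \mathcal{R}(A)$, forcing $\mathcal{R}(AX) = \mathcal{R}(A)$, so projecting the columns of $A$ gives $AXA = A$, i.e.\ equation $(1)$. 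Part (iv) follows symmetrically by considering $\mathcal{R}((XA)^{*})$ inside $\mathcal{R}(A^{*})$.

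The main obstacle is the left-cancellation in the reverse inclusion, passing from $AUV = AU((AU)^{*}AU)^{-1}(AU)^{*}$ to the corresponding identity without $A$; this is precisely where the rank hypothesis $r(AU) = r(U)$ enters in a non-trivial way. Everything else reduces to mechanical verification of the Penrose equations, dimension counting, and the standard identification of $AU((AU)^{*}AU)^{-1}(AU)^{*}$ as the orthogonal projector onto $\mathcal{R}(AU)$.
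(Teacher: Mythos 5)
This lemma is quoted in the paper from \cite{stanimirovic2011full} and is not proved there, so there is no in-paper argument to compare against; your proposal has to stand on its own, and it does. The structure is the standard one and each step checks out: the Gram matrix $(AU)^{*}AU$ is invertible precisely because $r(AU)=r(U)=s$; the identification of $AU((AU)^{*}AU)^{-1}(AU)^{*}$ as the orthogonal projector onto $\mathcal{R}(AU)$ gives $(3)$ and, together with the rank count, membership in $A\{2,3\}_{s}$; and in the reverse inclusion the full-rank factorization $X=UV$ plus the chain $s=r(X)=r(AX)\le r(AU)\le r(U)=s$ makes $U$ admissible, after which $AU$ has full column rank and can be cancelled on the left of $AUV=AU((AU)^{*}AU)^{-1}(AU)^{*}$. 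The reduction of (iii)--(iv) to the $s=r$ case via $r(A)=r(AXA)\le r(X)\le r(A)$ and the observation that a rank-$r$ orthogonal projector with range inside $\mathcal{R}(A)$ must fix $\mathcal{R}(A)$ is also correct. One small wording slip: in the forward direction you say that multiplying $(AX)^{2}=AX$ on the left by $X$ gives $XAX=X$; literally that only yields $XAXAX=XAX$. The intended fact is immediate anyway, since $XAX=U G^{-1}\left((AU)^{*}AU\right)G^{-1}(AU)^{*}=UG^{-1}(AU)^{*}=X$ with $G=(AU)^{*}AU$, so this is a presentational blemish rather than a gap.
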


\begin{thm}

Let $A \in \mathbb{C}^{m \times n}, \ W \in \mathbb{C}^{n \times m}$ and $r=r(A)$, then
\begin{equation*} 
\begin{split}
    A\{1',2',3'\} =&\{Y((WAWY)^*WAWY)^{-1}(WAWY)^*
    \mid Y \in \mathbb{C}^{m \times r}_r, \\& \  r(WAWY)=r(Y)=r(A) \} \\
    =& \{Y(WAWY)^{\dagger} \mid Y \in \mathbb{C}^{m \times r}_r, \  r(WAWY)=r(Y)=r(A) \}
    \end{split}
\end{equation*}
and
\begin{equation*}
\begin{split}
    A\{1',2',4'\} =&\{(YWAW)^*(YWAW(YWAW)^*)^{-1}Y \mid Y \in \mathbb{C}^{r \times m}_r, \\& \  r(YWAW)=r(Y)=r(A) \} \\
    =& \{(YWAW)^{\dagger}Y \mid Y \in \mathbb{C}^{r \times m}_r, \  r(YWAW)=r(Y)=r(A) \}.
    \end{split}
\end{equation*}
\end{thm}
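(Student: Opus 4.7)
The plan is to reduce everything to Lemma \ref{222} via Corollary \ref{A(1,2,3)=waw(1,2,3)}, which identifies $A\{1',2',3'\}$ with $(WAW)\{1,2,3\}$, together with the analogous identification $A\{1',2',4'\} = (WAW)\{1,2,4\}$. The latter is obtained essentially verbatim from the proof of Theorem \ref{1,2,3 non empty}: the chain $AWXWA = U^*WAWXWAWV = U^*WAWV = A$ uses only that $X$ is a $\{1\}$-inverse of $WAW$ together with the rank hypothesis, so the Hermitian symmetry condition can be imposed on $XWAW$ instead of $WAWX$ without any change to the argument. This reduction step is therefore essentially free.

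Once the reduction is in place, I would apply Lemma \ref{222}(iii) to $WAW \in \mathbb{C}^{n \times m}_{r}$, which yields
\begin{equation*}
(WAW)\{1,2,3\} = \{Y((WAWY)^* WAWY)^{-1}(WAWY)^* : Y \in \mathbb{C}^{m \times r}_r\},
\end{equation*}
with the implicit requirement $r(WAWY) = r$ for the central inverse to exist. Inserting this into Corollary \ref{A(1,2,3)=waw(1,2,3)} gives the first displayed equality in the description of $A\{1',2',3'\}$. The second displayed equality is then just the standard full-rank formula for the Moore-Penrose inverse: whenever $M$ has full column rank, $M^\dagger = (M^*M)^{-1}M^*$. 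Here I apply it with $M = WAWY$, which has full column rank by the rank hypothesis, and this immediately collapses the long expression to $Y(WAWY)^\dagger$.

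The description of $A\{1',2',4'\}$ is handled symmetrically, replacing Lemma \ref{222}(iii) by Lemma \ref{222}(iv) and the full column rank formula by its dual $M^\dagger = M^*(MM^*)^{-1}$ for matrices of full row rank, applied with $M = YWAW$. In summary, the theorem is really a dimensional bookkeeping exercise on top of the existing corollary and lemma, and I do not anticipate any genuine obstacle; the only point requiring mild care is tracking that the rank hypothesis $r(WAWY) = r$ (resp.\ $r(YWAW) = r$) is precisely what turns the abstract full-rank expressions of Lemma \ref{222} into well-defined matrices and, simultaneously, what enables the Moore-Penrose collapse in each case.
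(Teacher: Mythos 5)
Your proposal is correct and follows essentially the same route as the paper: both identify $A\{1',2',3'\}$ with $(WAW)\{1,2,3\}$ (and dually for $\{1',2',4'\}$) via Corollary \ref{A(1,2,3)=waw(1,2,3)} and then invoke Lemma \ref{222}, with the rank condition $r(WAWY)=r(Y)=r(A)$ doing the bookkeeping. The only cosmetic difference is that the paper routes the argument through the $\{2,3\}_s$ sets of Lemma \ref{222}(i) and a rank chain, whereas you use part (iii) directly and make the Moore--Penrose collapse explicit.
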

\begin{proof}
Using Lemma $\ref{222}$, we have \begin{equation} \label{2,3}
    \begin{split}
        &(WAW)\{2,3\}_s = \\
        &\{Y((WAWY)^*WAWY)^{-1}(WAWY)^* \mid   Y \in \mathbb{C}^{m \times r}_s, \  r(WAWY)=r(Y) \},
    \end{split}
\end{equation}
    for $s\leq r(WAW)$. If $s=r(WAW)$ then $(WAW)\{2,3\}_s=(WAW)\{1,2,3\}$. If $X \in (WAW)\{1,2,3\} $, then by equation $(\ref{2,3})$, $r(X)= r(Y)= r(WAWY) \leq 
 r(WAW) \leq r(A)$. Now if $r(A)=r(X)$, then $s=r(WAW) = r(A)=r$. Hence by Corollary \ref{A(1,2,3)=waw(1,2,3)}, $A\{1', 2',3'\} \neq \emptyset$ and $X \in A\{1', 2',3'\}$. Now if $X' \in A\{1', 2',3'\} \neq \emptyset$, then from Definition \ref{def1},  $r(X')=r(A)$ and by Corollary \ref{A(1,2,3)=waw(1,2,3)}, $X' \in (WAW)\{1,2,3\}$. The proof follows similarly for the set $A\{1',2',4'\}$.
\end{proof}
The following lemma provides a representation of $\{1,2,3\}$ and $\{1,2,4\}$-inverses. A similar representation is obtained for $\{1',2',3'\}$ and $\{1',2',4'\}$-inverses in the next theorem.
\begin{lem} \cite{ben2003generalized} Let $A \in \mathbb{C}^{m \times n}$, then
\begin{equation*}
\begin{split}
    A\{1,2,3\}=\{A^{\dagger}+ (I_m-A^{\dagger}A)UA^{\dagger} \mid U \in \mathbb{C}^{m \times m} \},
 \\
  A\{1,2,4\}=\{A^{\dagger}+ A^{\dagger}V(I_n-AA^{\dagger}) \mid V \in \mathbb{C}^{m \times m} \}.
\end{split}
\end{equation*}
\end{lem}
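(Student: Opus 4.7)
The plan is to prove the first equality by mutual inclusion, and then obtain the second from the conjugate--transpose duality $X\in A\{1,2,3\}\Longleftrightarrow X^{*}\in A^{*}\{1,2,4\}$ together with $(A^{*})^{\dagger}=(A^{\dagger})^{*}$; so the real work lies in the $\{1,2,3\}$ statement.

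For the inclusion $\supseteq$, I would substitute $X=A^{\dagger}+(I-A^{\dagger}A)UA^{\dagger}$ directly into the three Penrose equations $(1),(2),(3)$. The single identity that does all the work is $A(I-A^{\dagger}A)=A-AA^{\dagger}A=0$, which annihilates the $U$-term on the left and collapses $AX$ to $AA^{\dagger}$. Hermicity of $AA^{\dagger}$ then gives (3); $AXA=AA^{\dagger}A=A$ gives (1); and $XAX=X\cdot AA^{\dagger}=A^{\dagger}AA^{\dagger}+(I-A^{\dagger}A)UA^{\dagger}AA^{\dagger}=A^{\dagger}+(I-A^{\dagger}A)UA^{\dagger}=X$ gives (2), where the last two equalities are instances of $A^{\dagger}AA^{\dagger}=A^{\dagger}$.

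For the inclusion $\subseteq$, let $X\in A\{1,2,3\}$. My first step is the canonical identity $AX=AA^{\dagger}$: condition (3) makes $AX$ Hermitian, conditions (1)--(2) make it idempotent via $(AX)^{2}=A(XAX)=AX$, and $AXA=A$ forces its range to coincide with $\mathcal{R}(A)$; since the unique Hermitian idempotent with range $\mathcal{R}(A)$ is $AA^{\dagger}$, we conclude $AX=AA^{\dagger}$. From this I read off (a) $A^{\dagger}AX=A^{\dagger}AA^{\dagger}=A^{\dagger}$ and (b) $X=XAX=XAA^{\dagger}$. Taking the witness $U:=XA$, I then compute $(I-A^{\dagger}A)(XA)A^{\dagger}=XAA^{\dagger}-A^{\dagger}AXAA^{\dagger}=X-A^{\dagger}AX=X-A^{\dagger}$, applying (b) to each term of the middle expression and (a) at the last step. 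Rearranging yields $X=A^{\dagger}+(I-A^{\dagger}A)UA^{\dagger}$, as required.

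The $\{1,2,4\}$ identity follows either by replaying the argument with left and right roles exchanged --- condition (4) forces $XA=A^{\dagger}A$ by the same projector characterisation, and $V:=AX$ serves as the witness --- or automatically from the duality stated at the outset. I expect the only nontrivial step to be the distillation of $AX=AA^{\dagger}$ from the three Penrose axioms; once this canonical form is in hand, everything else is repeated application of the triple identity $A^{\dagger}AA^{\dagger}=A^{\dagger}$.
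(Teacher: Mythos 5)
Your proof is correct. The paper offers no proof of this lemma---it is imported directly from Ben-Israel and Greville---and your argument (the annihilation identity $A(I-A^{\dagger}A)=0$ for one inclusion; the extraction of $AX=AA^{\dagger}$ as the unique Hermitian idempotent with range $\mathcal{R}(A)$, followed by the explicit witness $U=XA$, for the other; and conjugate-transpose duality for the $\{1,2,4\}$ case) is the standard one. Note that your witness $U=XA\in\mathbb{C}^{n\times n}$ silently corrects a dimension typo in the statement as printed: for $A\in\mathbb{C}^{m\times n}$ the first formula must read $I_n-A^{\dagger}A$ with $U\in\mathbb{C}^{n\times n}$, and the identity in the second formula must be $I_m-AA^{\dagger}$.
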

\begin{thm}

 \label{lemma 7}
    Let $A \in \mathbb{C}^{m \times n}, \ W \in \mathbb{C}^{n \times m}$, then
\begin{equation*}
\begin{split}
    A\{1',2',3'\}=\{(WAW)^{\dagger}+ (I_m-(WAW)^{\dagger}WAW)U(WAW)^{\dagger} \\ \mid U \in \mathbb{C}^{m \times m}, \  r(WAW)=r(A) \}
    \end{split}
\end{equation*}
and
\begin{equation} \label{4''}
\begin{split}
    A\{1',2',4'\}=\{(WAW)^{\dagger}+ (WAW)^{\dagger}V(I_n-WAW(WAW)^{\dagger}) \\ \mid V \in \mathbb{C}^{m \times m}, \  r(WAW)=r(A) \}.
    \end{split}
\end{equation}
\end{thm}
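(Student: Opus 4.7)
The plan is to derive this theorem as a straightforward consequence of two results that are already established in the excerpt: Corollary \ref{A(1,2,3)=waw(1,2,3)} (and its analogue for $\{1',2',4'\}$-inverses), together with the preceding lemma giving representations of $\{1,2,3\}$- and $\{1,2,4\}$-inverses in terms of the Moore--Penrose inverse.

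First, I would note that $WAW \in \mathbb{C}^{m \times m}$, so that $(WAW)^{\dagger} \in \mathbb{C}^{m \times m}$, and the projectors $I_m - (WAW)^{\dagger}WAW$ and $I_n - WAW(WAW)^{\dagger}$ appearing in the claimed representations have the correct dimensions to match the lemma applied to the matrix $WAW$. Under the hypothesis $r(WAW)=r(A)$, Theorem \ref{1,2,3 non empty} guarantees $A\{1',2',3'\} \neq \emptyset$, and Corollary \ref{A(1,2,3)=waw(1,2,3)} yields the set equality
\begin{equation*}
    A\{1',2',3'\} = (WAW)\{1,2,3\}.
\end{equation*}
Applying the preceding lemma with $WAW$ in place of $A$ immediately gives
\begin{equation*}
    (WAW)\{1,2,3\} = \{(WAW)^{\dagger} + (I_m - (WAW)^{\dagger}WAW)U(WAW)^{\dagger} \mid U \in \mathbb{C}^{m \times m}\},
\end{equation*}
establishing the first equality. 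If instead $r(WAW) \neq r(A)$, then by Theorem \ref{1,2,3 non empty} the set $A\{1',2',3'\}$ is empty, consistent with the rank constraint imposed on the right-hand side.

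For the second equality, the argument is entirely parallel: the analogous version of Corollary \ref{A(1,2,3)=waw(1,2,3)} for $\{1',2',4'\}$-inverses (which follows by the same reasoning as in the proof of Theorem \ref{1,2,3 non empty}, as already remarked in the text) gives $A\{1',2',4'\} = (WAW)\{1,2,4\}$ whenever $r(WAW)=r(A)$, and then the lemma's representation of $(WAW)\{1,2,4\}$ yields the claim.

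There is no real obstacle here; the theorem is essentially a transportation of known representations for $\{1,2,3\}$- and $\{1,2,4\}$-inverses across the reduction $A\{1',2',3'\} = (WAW)\{1,2,3\}$ supplied by Corollary \ref{A(1,2,3)=waw(1,2,3)}. The only point requiring care is making sure the rank condition $r(WAW)=r(A)$ is explicitly recorded, since it is both the precondition for the corollary and the condition that ensures the two sets in the statement are nonempty simultaneously.
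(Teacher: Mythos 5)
Your proposal is correct and follows essentially the same route as the paper: both reduce the problem to the $\{1,2,3\}$- (resp.\ $\{1,2,4\}$-) inverses of $WAW$ via Corollary~\ref{A(1,2,3)=waw(1,2,3)} and the known Moore--Penrose representation of those sets, the paper's proof merely making the explicit choice $U=XWAW$ (resp.\ $V=WAWX$) for the forward inclusion. One small slip in your dimension check: $WAW\in\mathbb{C}^{n\times m}$ (not $\mathbb{C}^{m\times m}$), so $(WAW)^{\dagger}\in\mathbb{C}^{m\times n}$; the projectors $I_m-(WAW)^{\dagger}WAW$ and $I_n-WAW(WAW)^{\dagger}$ still have the sizes you claim, and since the cited lemma applies to rectangular matrices the argument is unaffected.
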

\begin{proof}
Let $X \in A\{1',2',3'\}$ be an arbitrary element of the set $A\{1',2',3'\}$. Substitute $XWAW$ for $U$ in $(WAW)^{\dagger}+ (I_m-(WAW)^{\dagger}WAW)U(WAW)^{\dagger}$, we get $(WAW)^{\dagger}+ (I_m-(WAW)^{\dagger}WAW)XWAW(WAW)^{\dagger}=X$. The converse is trivial. Likewise equation $(\ref{4''})$ follows by taking $V=WAWX$.
\end{proof}

It is well known that $X \in \mathbb{C}^{m \times n}$ is a $\{1,2,3\}$-inverse(or $\{1,2,4\}$-inverse) of $A \in \mathbb{C}^{m \times n}$ if and only if it is an outer inverse such that $\mathcal{N}(X)=\mathcal{N}(A^*)$(or $\mathcal{R}(X)=\mathcal{R}(A^*)$).
The following two theorems characterizes the sets $A\{1',2',3'\}$ and $A\{1',2',4'\}$. To prove these theorems, we present the following lemma.
\begin{lem}  \label{any 2 give 3} \cite{ben2003generalized}
   A third assertion is implied by any two of the following three:
   \begin{itemize}
       \item[(i)] $X \in A\{1\}$.
       \item[(ii)] $X \in A\{2\}$.
       \item[(iii)] $r(X)=r(A)$.
   \end{itemize}
\end{lem}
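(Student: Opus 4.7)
The plan is to prove the three separate implications, each time exploiting only the Penrose-type equations $AXA=A$ or $XAX=X$ together with elementary rank inequalities $r(MN)\le\min\{r(M),r(N)\}$ and the reverse inequalities obtained from the equations themselves. The unifying observation I would build the argument around is that whenever one of the equations $(1)$ or $(2)$ holds, the products $XA$ and $AX$ become candidates for idempotent matrices, and the rank hypothesis $r(X)=r(A)$ is exactly what is needed to pin down their ranges.

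First I would handle $(i)\wedge(ii)\Rightarrow(iii)$, which is immediate: $r(A)=r(AXA)\le r(X)$ from $(i)$, and $r(X)=r(XAX)\le r(A)$ from $(ii)$, so (iii) follows. This warm-up also isolates the two basic rank inequalities reused below.

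Next I would prove $(i)\wedge(iii)\Rightarrow(ii)$. The key step is to observe that $(XA)^{2}=X(AXA)=XA$, so $XA$ is idempotent. Sandwiching with $r(A)=r(AXA)\le r(XA)\le r(A)$ gives $r(XA)=r(A)=r(X)$; combined with $\mathcal{R}(XA)\subseteq\mathcal{R}(X)$ this forces $\mathcal{R}(XA)=\mathcal{R}(X)$. Any idempotent acts as the identity on its own range, so for every $v$ the vector $Xv\in\mathcal{R}(X)=\mathcal{R}(XA)$ satisfies $XA(Xv)=Xv$, which is precisely $XAX=X$. The symmetric statement $(ii)\wedge(iii)\Rightarrow(i)$ is proved the same way with the roles of $XA$ and $AX$ swapped: $(AX)^{2}=A(XAX)=AX$, then $r(X)=r(XAX)\le r(AX)\le r(X)$ together with $r(X)=r(A)$ gives $\mathcal{R}(AX)=\mathcal{R}(A)$, and applying the projector $AX$ to any $Av\in\mathcal{R}(A)$ yields $AXA=A$.

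The only real subtlety is making sure the rank equality is tight enough to upgrade the inclusion $\mathcal{R}(XA)\subseteq\mathcal{R}(X)$ (respectively $\mathcal{R}(AX)\subseteq\mathcal{R}(A)$) to equality; this is where the hypothesis $r(X)=r(A)$ is essential and where a careless treatment could fail. Once this is cleanly stated, the projector-acts-as-identity-on-its-range principle closes each of the two nontrivial implications with no further computation.
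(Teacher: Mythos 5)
Your proposal is correct. The paper itself offers no proof of this lemma --- it is simply quoted from Ben-Israel and Greville --- so there is nothing internal to compare against; your argument (the two elementary rank inequalities for $(i)\wedge(ii)\Rightarrow(iii)$, and for the other two implications the observation that $XA$, respectively $AX$, is idempotent with range equal to $\mathcal{R}(X)$, respectively $\mathcal{R}(A)$, by the rank hypothesis, so that the projector acts as the identity there) is exactly the standard proof found in that reference, and every step, including the upgrade of the range inclusion to an equality via equality of dimensions, is justified.
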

\begin{thm}
    
    \label{a2ts}
Let $A \in \mathbb{C}^{m \times n}, \ W \in \mathbb{C}^{n \times m}$. Then the following statements are equivalent.
\begin{itemize} 
    \item[(i)] $X \in A\{1',2',3'\}$.
    \item[(ii)]  $r(WAW)=r(A)$, $XWAWX=X$,  and $\mathcal{N}(X)=\mathcal{N}((WA)^*)$.
    \item[(iii)] $r(WAW)=r(A)$, $AWXWX=A$, and $\mathcal{N}(X)=\mathcal{N}((WA)^*)$.
\end{itemize}

\end{thm}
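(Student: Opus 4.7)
The plan is to leverage Corollary~\ref{A(1,2,3)=waw(1,2,3)}, which states that under the rank condition $r(WAW)=r(A)$ one has $X \in A\{1',2',3'\} \Longleftrightarrow X \in (WAW)\{1,2,3\}$. Thus (i) is equivalent to the conjunction ``$r(WAW)=r(A)$ and $X\in M\{1,2,3\}$'' for $M:=WAW$, and I will show that (ii) and (iii) are the same conjunction written in different ways.

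The technical preliminary I will establish first is that, under $r(WAW)=r(A)$, $\mathcal{N}((WA)^*)=\mathcal{N}((WAW)^*)$. The Frobenius inequality forces $r(WA)=r(A)=r(WAW)$, so both nullspaces live in $\mathbb{C}^n$ with common dimension $n-r(A)$; the inclusion $\mathcal{N}((WA)^*)\subseteq \mathcal{N}((WAW)^*)$ is immediate from $(WAW)^*=W^*(WA)^*$, and equality then follows by dimension.

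For (i) $\Longleftrightarrow$ (ii), the reduction collapses to the classical characterization: $X\in M\{1,2,3\}$ iff $XMX=X$ and $\mathcal{N}(X)=\mathcal{N}(M^*)$. Forward: $MX$ is Hermitian and idempotent (the latter from $XMX=X$), with $\mathcal{R}(MX)=\mathcal{R}(M)$ from $MXM=M$, hence $MX$ is the orthogonal projection onto $\mathcal{R}(M)$, giving $\mathcal{N}(MX)=\mathcal{N}(M^*)$, while $\mathcal{N}(X)=\mathcal{N}(MX)$ follows from $X=XMX$. Backward: $\mathcal{N}(X)=\mathcal{N}(M^*)$ gives $r(X)=r(M)$; combined with $X\in M\{2\}$, Lemma~\ref{any 2 give 3} forces $X\in M\{1\}$, so $MXM=M$; then $MX$ is idempotent with range $\mathcal{R}(M)$ and nullspace $\mathcal{R}(M)^{\perp}$, hence the orthogonal projection, hence Hermitian.

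For (i) $\Longleftrightarrow$ (iii), the forward direction is immediate from $(1')$ together with the nullspace computation already given. For the backward direction, multiplying $AWXWA=A$ by $W$ on both sides yields $WAWXWAW=WAW$, i.e., $X\in M\{1\}$; the nullspace hypothesis supplies $r(X)=r(M)$, so Lemma~\ref{any 2 give 3} delivers $X\in M\{2\}$, that is, $XMX=X$; the Hermitian property of $MX$ then follows from the same orthogonal-projection identification used above. The main obstacle is the nullspace bookkeeping: the theorem is phrased with $\mathcal{N}((WA)^*)$, whereas the natural partner of $M=WAW$ is $\mathcal{N}((WAW)^*)$, and identifying these two spaces is exactly where the rank condition becomes indispensable; the remainder is a systematic use of Lemma~\ref{any 2 give 3} to exchange the inner- and outer-inverse conditions once the ranks are forced to coincide.
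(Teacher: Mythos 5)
Your proposal is correct and follows essentially the same route as the paper: reduce to $(WAW)\{1,2,3\}$ via Corollary~\ref{A(1,2,3)=waw(1,2,3)}, identify $\mathcal{N}((WA)^*)$ with $\mathcal{N}((WAW)^*)$ under the rank condition, and invoke Lemma~\ref{any 2 give 3} to trade the inner- and outer-inverse equations. The only difference is that you spell out the orthogonal-projection argument and the dimension count that the paper dismisses as ``well known'' or ``trivial.''
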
 
\begin{proof}
    $(i) \iff (ii)$. First, $X \in A\{1',2',3'\}$ implies $X \in (WAW)\{1,2,3\}$. Hence, $XWAWX=X$ and $\mathcal{N}(X)=\mathcal{N}((WAW)^*)$. Also $X \in A\{1',2',3'\}$ implies $r(WAW)=r(A)$ whence $\mathcal{N}((WAW)^*)=\mathcal{N}((WA)^*)$. Converse is trivial.\\
    $(ii) \iff (iii)$. It is direct by Lemma $\ref{any 2 give 3}$.
\end{proof}
\begin{thm}

Let $A \in \mathbb{C}^{m \times n}, \ W \in \mathbb{C}^{n \times m}$. Then the following statements are equivalent.
\begin{itemize} 
    \item[(i)] $X \in A\{1',2',4'\}$.
    \item[(ii)]  $r(WAW)=r(A)$, $XWAWX=X$,  and $\mathcal{R}(X)=\mathcal{R}((AW)^*)$.
    \item[(iii)] $r(WAW)=r(A)$, $AWXWX=A$, and $\mathcal{R}(X)=\mathcal{R}((AW)^*)$.
\end{itemize}

\end{thm}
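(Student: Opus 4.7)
My plan is to mirror the proof of Theorem \ref{a2ts} step for step, swapping the role of $(3')$/null-space for $(4')$/range-space throughout. The first move is to establish the obvious dual of Corollary \ref{A(1,2,3)=waw(1,2,3)}, namely that under $r(WAW)=r(A)$ one has $X \in A\{1',2',4'\}$ if and only if $X \in (WAW)\{1,2,4\}$; this is the ``reverse'' observation already recorded in the corollary following Theorem \ref{1,2,3 non empty}, obtained by rerunning the proof of Theorem \ref{1,2,3 non empty} with $\{1',2',4'\}$ in place of $\{1',2',3'\}$.

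For $(i) \Leftrightarrow (ii)$, I would combine this dual corollary with the standard characterization $X \in (WAW)\{1,2,4\}$ iff $X$ is an outer inverse of $WAW$ satisfying $\mathcal{R}(X)=\mathcal{R}((WAW)^*)$. It then only remains to replace $\mathcal{R}((WAW)^*)$ by $\mathcal{R}((AW)^*)$; the inclusion $\mathcal{R}((WAW)^*) \subseteq \mathcal{R}((AW)^*)$ is automatic, and equality is forced by the rank chain $r(WAW)=r(WA)=r(AW)=r(A)$ provided by Remark \ref{rem2.1}.

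For $(ii) \Leftrightarrow (iii)$, I would apply Lemma \ref{any 2 give 3} with $WAW$ playing the role of $A$. Assuming $(ii)$, the equation $XWAWX=X$ puts $X \in (WAW)\{2\}$, while $\mathcal{R}(X)=\mathcal{R}((AW)^*)$ together with the rank chain gives $r(X)=r(AW)=r(WAW)$; the lemma then produces $WAWXWAW=WAW$. To upgrade this to $AWXWA=A$, I would reuse the factorizations $A=U^{*}WA$ and $A=AWV$ extracted from $r(WAW)=r(A)$ in the proof of Theorem \ref{1,2,3 non empty}: right-multiplying $WAWXWAW=WAW$ by $V$ gives $WAWXWA=WA$, and then left-multiplying by $U^{*}$ yields $AWXWA=A$. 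The converse direction from $(iii)$ to $(ii)$ is easier: $AWXWA=A$ immediately sandwiches to $WAWXWAW=WAW$, so $X \in (WAW)\{1\}$, and with $r(X)=r(A)=r(WAW)$ from the range condition, Lemma \ref{any 2 give 3} returns $XWAWX=X$.

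The only step I expect to require genuine care is the passage from $WAWXWAW=WAW$ to $AWXWA=A$ in $(ii)\Rightarrow(iii)$, as this is the one place where the outer $W$'s cannot simply be absorbed into the ``virtual matrix'' $WAW$. However, this is precisely the rank-transfer device already deployed in Theorem \ref{1,2,3 non empty}, so the work reduces to citing that construction; every other step is a direct dualization, under conjugate transpose, of the corresponding step of Theorem \ref{a2ts}.
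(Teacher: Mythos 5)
Your proposal is correct and follows essentially the same route as the paper: reduce to $(WAW)\{1,2,4\}$ via the dual of Corollary~\ref{A(1,2,3)=waw(1,2,3)}, use the range characterization of $\{1,2,4\}$-inverses together with the rank chain to pass between $\mathcal{R}((WAW)^*)$ and $\mathcal{R}((AW)^*)$ for $(i)\Leftrightarrow(ii)$, and invoke Lemma~\ref{any 2 give 3} for $(ii)\Leftrightarrow(iii)$. The only difference is that you spell out the passage from $WAWXWAW=WAW$ to $AWXWA=A$ using the factorizations $A=U^{*}WA$ and $A=AWV$, a step the paper leaves implicit behind ``direct by Lemma~\ref{any 2 give 3}.''
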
 
\begin{proof}
    $(i) \iff (ii)$. Clearly, $X \in A\{1',2',4'\}$ implies $X \in (WAW)\{1,2,4\}$. Hence, $XWAWX=X$ and $\mathcal{R}(X)=\mathcal{R}((WAW)^*)$. Also $X \in A\{1',2',4'\}$ implies $r(WAW)=r(A)$ which further gives $\mathcal{R}((WAW)^*)=\mathcal{R}((AW)^*)$. Converse is trivial.\\
    $(ii) \iff (iii)$. It is direct by Lemma $\ref{any 2 give 3}$.
\end{proof}

\section{$\{1',2',3', {1^{k}}' \}$-inverses}\label{sec3}
In this section, we begin by introducing $\{1',2',3', {1^{k}}' \}$-inverses, which are defined as $W$-weighted $\{1,2,3,1^{k}\}$-inverse of $A \in \mathbb{C}^{m \times n}$. The existence criteria, characterizations, and representations of the set  $A\{1',2',3', {1^{k}}' \}$ are obtained using the results of $ A \{1', 2', $ $3' \}$ from Section \ref{sec2}. Conditions for uniqueness of the $\{1',2',3',{1^{k}}'\}$-inverse are also obtained. Canonical representations of the elements of the set $A\{ 1',2',3',$ ${1^{k}}'\}$ are obtained using the singular value decomposition and the core-nilpotent decomposition of matrices.
\begin{defn} \label{def}
Let $A \in \mathbb{C}^{m \times n}, \ W \in \mathbb{C}^{n \times m}$. Then $X \in \mathbb{C}^{m \times n}$ is called a $W$-weighted $\{1,2,3,1^k\}$-inverse if it satisfies the following equations
\begin{equation*}
\begin{split}
    &(1') AWXWA=A, \ (2') XWAWX=X, \\   &(3')(WAWX)^*=WAWX, \    ({1^{k}}') (XW)(AW)^{k+1}=(AW)^k,
    \end{split}
\end{equation*}
where $k \geq \text{ind}(A,W)$.
\end{defn}
Dually, $X \in \mathbb{C}^{m \times n}$ is a $W$-weighted $\{1,2,4,{}^{k}1\}$-inverse if it satisfies the following equations
\begin{equation*}
\begin{split}
    (1')& AWXWA=A, \ (2') XWAWX=X, \  \\ (4')& (XWAW)^*=XWAW, \ ({{}^{k}1}')
    (AW)^{k+2}X=(AW)^kA,
    \end{split}
\end{equation*}
where $k \geq \text{ind}(A,W)$. Note that for the dual case, the equation $({{}^{k}1}')$ is motivated by the definition of $W$-Drazin inverse \cite{rakovcevic2002weighted}.
A $W$-Weighted $\{1,2,3,1^k\}$-inverse of $A$ will be denoted by $A^{\text{\textcircled{$-$}},W}$ and a $W$-Weighted $\{1,2,4,{}^{k}1\}$-inverse of $A$ will be denoted by $A_{\text{\textcircled{$-$}},W}$.

The existence and a representation of $\{1',2',3',{1^{k}}'\}$-inverses of $A$ are given by the following result.
\begin{thm} \label{exist amd rep}
Let $A \in \mathbb{C}^{m \times n}, \ W \in \mathbb{C}^{n \times m}$ such that $k = \text{ind}(A, W)$ and $r(WAW)=r(A)$. Then
\begin{equation} \label{1,2,3,1m}
\begin{split}
     A\{1',2',3',{1^{k}}'\} = &\{X+(I_m-XWAW)A^{D,W}WAWX \mid X \in A\{1',2',3'\}\} \\
    =& \{X+(I_m-XWAW)YWAWX \mid X \in A\{1',2',3'\}, \\& Y \in A\{{1^{k}}'\}\}
    \end{split}
\end{equation}  
and
\begin{equation} \label{4'}
\begin{split}
    A\{1',2',4',{{}^{k}1}'\} =& \{X+XWAWA^{D,W}(I_n-WAWX) \mid X \in A\{1',2',4'\}\} \\
    =& \{X+XWAWY(I_n-WAWX) \mid X \in A\{1',2',4'\}, \\& Y \in A\{{{}^{k}1}'\}\}.
    \end{split}
\end{equation}
Therefore, $A\{1',2',3',{1^{k}}'\}\neq \emptyset$ (or $A\{1',2',4',{{}^{k}1}'\}\neq \emptyset$) if and only if $r(WAW)=r(A)$.
\end{thm}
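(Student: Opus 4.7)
The plan is to establish the two representations in $(\ref{1,2,3,1m})$ by direct inclusion arguments and then deduce the nonemptiness statement as a consequence. The dual equation $(\ref{4'})$ follows from a symmetric argument after interchanging $(3')\leftrightarrow(4')$ and $({1^{k}}')\leftrightarrow({}^{k}1')$, so I focus on $(\ref{1,2,3,1m})$.

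For the ``$\supseteq$'' direction of the first equality, fix $X \in A\{1',2',3'\}$ and set $Z := X + (I_m - XWAW)A^{D,W}WAWX$. By Corollary \ref{A(1,2,3)=waw(1,2,3)}, $X \in (WAW)\{1,2,3\}$, so $WAWXWAW = WAW$; multiplying $(1')$ on the right by $W$ yields $AWXWAW = AW$. These two identities immediately collapse the definition of $Z$: they give $WAWZ = WAWX$ and $AWZ = AWX$. Consequently $(3')$ is automatic (since $WAWZ = WAWX$ is Hermitian by hypothesis), $(1')$ follows as $AWZWA = AWXWA = A$, and $(2')$ drops out after a short substitution using $XWAWX = X$. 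For $({1^{k}}')$, iterating $AWXWAW = AW$ gives $AWXW(AW)^{k+1} = (AW)^{k+1}$, which combined with the defining relation $A^{D,W}W(AW)^{k+1} = (AW)^{k}$ makes the cross terms cancel and yields $ZW(AW)^{k+1} = (AW)^{k}$.

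For the ``$\subseteq$'' direction, given $Z \in A\{1',2',3',{1^{k}}'\}$, the idea is to take $X := Z$ itself, which already lies in $A\{1',2',3'\}$, and then show that the correction term vanishes, i.e., $(I_m - ZWAW)A^{D,W}WAWZ = 0$. This is the main obstacle. The key lemma I would prove along the way is $A^{D,W}WAW = (AW)^{D}AW$, derived from the representation $A^{D,W} = [(AW)^D]^2 A$ stated in Section \ref{sec1} together with the Drazin axioms; this matrix is precisely the Drazin projector onto $\mathcal{R}((AW)^{k})$ along $\mathcal{N}((AW)^{k})$ for $k \geq \text{ind}(AW)$. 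Consequently $\mathcal{R}(A^{D,W}WAWZ) \subseteq \mathcal{R}((AW)^{k})$. Meanwhile, $({1^{k}}')$ reads $ZWAW \cdot (AW)^{k} = (AW)^{k}$, meaning $ZWAW$ restricts to the identity on $\mathcal{R}((AW)^{k})$. Combining the two facts yields $ZWAW \cdot A^{D,W}WAWZ = A^{D,W}WAWZ$, which is the vanishing we need.

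The second equality in $(\ref{1,2,3,1m})$ then follows easily: ``$\supseteq$'' is immediate because $A^{D,W} \in A\{{1^{k}}'\}$ by definition of the $W$-weighted Drazin inverse; for ``$\subseteq$'', given $Z \in A\{1',2',3',{1^{k}}'\}$, taking $X = Y := Z$ makes the correction term collapse to $(I_m - ZWAW)ZWAWZ = Z - ZWAWZ = 0$ by $(2')$. The nonemptiness equivalence then reduces to Theorem \ref{1,2,3 non empty}: one direction is immediate, since any $\{1',2',3',{1^{k}}'\}$-inverse is in particular a $\{1',2',3'\}$-inverse, while the converse follows by applying the ``$\supseteq$'' construction above to any $X$ supplied by Theorem \ref{1,2,3 non empty}.
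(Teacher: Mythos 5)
Your overall strategy coincides with the paper's: the same chain of three sets, the same vanishing of the correction term $(I_m-ZWAW)A^{D,W}WAWZ$ for $Z\in A\{1',2',3',{1^{k}}'\}$ (the paper does this algebraically, writing $A^{D,W}WAW=(A^{D,W}W)^k(AW)^k$ and invoking $({1^{k}}')$; your geometric phrasing via the Drazin projector $A^{D,W}WAW=(AW)^DAW$ is equivalent and correct), and the same direct verification that $X+(I_m-XWAW)A^{D,W}WAWX$ satisfies $(1'),(2'),(3'),({1^{k}}')$. The nonemptiness claim is handled as in the paper.

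There is, however, one genuine gap: you never prove the inclusion
\[
\bigl\{X+(I_m-XWAW)YWAWX \mid X\in A\{1',2',3'\},\ Y\in A\{{1^{k}}'\}\bigr\}\subseteq A\{1',2',3',{1^{k}}'\},
\]
which is required for the third set in the first display of the theorem to equal the other two. Call the second and third sets $S_1$ and $S_2$. Both of your arguments for the ``second equality'' establish the same inclusion, $S_1\subseteq S_2$: the remark that $A^{D,W}\in A\{{1^{k}}'\}$ shows each element of $S_1$ is of the form defining $S_2$, and the choice $X=Y:=Z$ shows $A\{1',2',3',{1^{k}}'\}\subseteq S_2$, which by your first equality is again $S_1\subseteq S_2$. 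Neither yields $S_2\subseteq S_1$ (equivalently $S_2\subseteq A\{1',2',3',{1^{k}}'\}$), so as written the chain of equalities does not close. The paper supplies exactly this missing piece in its step (iii), checking directly that $Z=X+(I_m-XWAW)YWAWX$ satisfies all four equations for an arbitrary $Y\in A\{{1^{k}}'\}$. The repair in your framework is immediate: your verification for $Y=A^{D,W}$ uses only the identity $A^{D,W}W(AW)^{k+1}=(AW)^k$ together with $AWXWAW=AW$ and $WAWXWAW=WAW$, and the first of these is precisely the defining property of an arbitrary $Y\in A\{{1^{k}}'\}$; so the computation carries over verbatim with $A^{D,W}$ replaced by $Y$. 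State that explicitly and the proof is complete.
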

\begin{proof}
    To get equation $(\ref{1,2,3,1m})$, the proof will be given in the following three steps:
    \begin{itemize}
        \item[{(i)}] $A\{1',2',3',{1^{k}}'\} \subseteq \{X+(I_m-XWAW)A^{D,W}WAWX \mid X \in A\{1',2',3'\}\}$.
        \item[{(ii)}] $\{X+(I_m-XWAW)A^{D,W}WAWX \mid  X \in A\{1',2',3'\}\} \} \subseteq \{X+(I_m-XWAW)YWAWX \mid X \in A\{1',2',3'\}, Y \in A\{{1^{k}}'\}\} $.
        \item[{(iii)}] $\{X+(I_m-XWAW)YWAWX \mid X \in A\{1',2',3'\}, Y \in A\{{1^{k}}'\}\} \subseteq A\{1',2',3',{1^{k}}'\}$.
    \end{itemize}
    Proofs of the steps.
    \begin{itemize}
        \item[(i)] Suppose that $Z \in A\{1',2',3',{1^{k}}'\}$. Then,
    \begin{equation*}
    \begin{split}
    (I_m-ZWAW)A^{D,W}WAWZ &= (I_m-ZWAW)(A^{D,W}W)^k(AW)^kZ \\
    &= ((AW)^k-ZWAW(AW)^k)(A^{D,W}W)^kZ \\
    &= 0.
    \end{split}
    \end{equation*}
    Hence, $Z=Z+(I_m-ZWAW)A^{D,W}WAWZ$, implying
    $Z \in \{X+(I_m-XWAW)A^{D,W}WAWX \mid X \in A\{1',2',3'\}\}$.
    \item[(ii)] The proof implies from the fact $A^{D,W} \in A\{{1^{k}}'\}$.
    \item[(iii)] Let $X \in A\{1',2',3'\}, Y \in A\{{1^{k}}'\}$ and $Z=X+(I_m-XWAW)YWAWX $. So, $AWZ=AWX$ implies $Z \in A\{1',2',3'\} $. Now,
    \begin{equation*}
    \begin{split}
         (ZW)(AW)^{k+1} =& (X+(I_m-XWAW)YWAWX)W(AW)^{k+1} \\
         =& XW(AW)^{k+1}+YWAWXW(AW)^{k+1} \\&-XWAWYWAWXW(AW)^{k+1} \\
         =& YWAWXW(AW)^{k+1}\\
         =& YWAW(AW)^{k}\\
         =& (AW)^k.
    \end{split}
    \end{equation*}
     Hence, $Z \in A\{1',2',3',{{1^{k}}'} \} $. 
    \end{itemize}
    Likewise equation $(\ref{4'})$ can be proved.
\end{proof}

\begin{rem} \label{remark 13}
$X=(WAW)^{\dagger}$ in Theorem $\ref{exist amd rep}$, gives a particular representation of $A^{\text{\textcircled{$-$}},W}$,  i.e., $$ A^{\text{\textcircled{$-$}},W} = (WAW)^{\dagger} +  (I_m-(WAW)^{\dagger}WAW) A^{D,W}WAW(WAW)^{\dagger}.$$
\end{rem}
The following corollary gives the sufficient condition for the set $A\{1',2',$ $3',{1^{k}}'\}$ to be a singleton set and hence the uniqueness of $\{1',2',3',{1^{k}}'\}$-inverse of $A$.
\begin{cor} \label{1,2,3,11}
Let $A \in \mathbb{C}^{m \times n}, \ W \in \mathbb{C}^{n \times m}$ such that $ k=\text{ind}(A,W) \leq 1$ and $r(WAW)=r(A)$. Then $A^{\text{\textcircled{$\#$}},W}$(weighted core inverse) is the unique $\{1',2',3',{1^{1}}'\}$-inverse of $A$.
\end{cor}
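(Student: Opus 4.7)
The plan is to show that any $Z \in A\{1',2',3',{1^{1}}'\}$ is forced to coincide with the weighted core inverse $A^{\text{\textcircled{$\#$}},W}$, by verifying that $Z$ satisfies the two conditions defining the weighted core-EP inverse at $k=1$, namely $WAWZ = WA(WA)^{\dagger}$ and $\mathcal{R}(Z) \subseteq \mathcal{R}(AW)$. Since the weighted core-EP inverse is unique, this simultaneously gives nonemptiness of $A\{1',2',3',{1^{1}}'\}$ (via Theorem \ref{exist amd rep}, using $r(WAW)=r(A)$) and its singleton-ness, together with the identification of the unique element.

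First, I would invoke Theorem \ref{exist amd rep} at $k=1$ to conclude $A\{1',2',3',{1^{1}}'\}\neq\emptyset$. Now pick any $Z$ in this set and set $E:=WAWZ$. By $(3')$ the matrix $E$ is Hermitian, and by $(2')$, $E^{2}=WAW(ZWAWZ)=WAWZ=E$, so $E$ is a Hermitian idempotent. Rewriting $(1')$ as $WA=(WAWZ)(WA)=E\,WA$ yields $\mathcal{R}(WA)\subseteq\mathcal{R}(E)$, while the reverse containment is clear from $E=(WAW)Z$. Consequently $E$ is the orthogonal projection onto $\mathcal{R}(WA)$, i.e., $WAWZ=WA(WA)^{\dagger}$.

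Next I would establish $\mathcal{R}(Z)=\mathcal{R}(AW)$. From $({1^{1}}')$, $AW=ZW(AW)^{2}$, hence $\mathcal{R}(AW)\subseteq\mathcal{R}(Z)$. From $(2')$, $r(Z)\leq r(WAW)=r(A)=r(AW)$, the last equalities coming from the hypothesis together with the Frobenius-inequality consequence recorded in Remark \ref{rem2.1}; the reverse containment follows.

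The two conditions $WAWZ=WA(WA)^{\dagger}$ and $\mathcal{R}(Z)\subseteq\mathcal{R}(AW)$ are exactly those defining the weighted core-EP inverse of $A$ at $k=1$, which by the terminology introduced in the preliminaries is $A^{\text{\textcircled{$\#$}},W}$. By its uniqueness, $Z=A^{\text{\textcircled{$\#$}},W}$. The main obstacle is not any single heavy calculation but the bookkeeping required to extract these two clean defining conditions from the four equations $(1'),(2'),(3'),({1^{1}}')$; both the range identification $\mathcal{R}(E)=\mathcal{R}(WA)$ and the rank chain $r(Z)=r(AW)$ lean crucially on the hypothesis $r(WAW)=r(A)$.
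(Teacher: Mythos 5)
Your proof is correct, and although it terminates at the same place as the paper --- verifying that any $Z\in A\{1',2',3',{1^{1}}'\}$ satisfies the two equations defining the weighted core-EP inverse at $k=1$, namely $WAWZ=WA(WA)^{\dagger}$ and $\mathcal{R}(Z)\subseteq\mathcal{R}(AW)$, and then invoking uniqueness of that inverse --- it reaches that point by a genuinely different and more economical route. The paper first passes through the representation of Theorem \ref{exist amd rep}, writing $X=Z+(I_m-ZWAW)A^{D,W}WAWZ$ with $Z\in A\{1',2',3'\}$, substituting $A^{D,W}=A(WA)^{\#}(WA)^{\#}$ to obtain the closed form $X=A(WA)^{\#}WZ$, and only then reads off $\mathcal{R}(X)\subseteq\mathcal{R}(AW)$ from that explicit formula; it also identifies $\mathcal{R}(WAWX)=\mathcal{R}(WA)$ by a rank chain. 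You instead work directly from the four defining equations: the inclusion $\mathcal{R}(AW)\subseteq\mathcal{R}(Z)$ comes straight from $({1^{1}}')$, and the rank chain $r(Z)\le r(WAW)=r(A)=r(AW)$ (using the hypothesis together with Remark \ref{rem2.1}) upgrades it to equality, so no explicit formula for $Z$ is ever needed; likewise your identification of $E=WAWZ$ as the orthogonal projector onto $\mathcal{R}(WA)$ uses the containment $E\,WA=WA$ extracted from $(1')$ rather than a rank count. The trade-off is that the paper's longer detour is not wasted effort: the intermediate identity $X=A(WA)^{\#}WZ$ is recorded immediately after the corollary as a new representation of the weighted core inverse, a byproduct your more direct argument does not produce.
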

\begin{proof}
    Let $X \in A\{1',2',3',{1^{1}}'\}$, thus there exists $Z \in A\{1',2',3'\}$ by Theorem \ref{exist amd rep}, such that 
    \begin{equation}\label{eq 18}
        \begin{split}
            X &= Z+(I_m-ZWAW)A^{D,W}WAWZ\\
            &= Z+A^{D,W}WAWZ-ZWAWA^{D,W}WAWZ\\
            &=Z+(A(WA)^{\#}(WA)^{\#})WAWZ-ZWAW(A(WA)^{\#}(WA)^{\#})WAWZ\\
            &=Z+A(WA)^{\#}WZ-ZWAWZ\\
            &= A(WA)^{\#}WZ.
        \end{split}
    \end{equation}
    Since for $k=1$, $A^{\text{\textcircled{$\#$}},W}=A^{\text{\textcircled{$\dagger$}}, W}$ \cite{ferreyra2019weak}, we will show that $X=A^{\text{\textcircled{$\dagger$}}, W}$. It is clear that $(WAWX)^2=WAWX$ and $X \in A\{3'\}$ imply that $WAWX$ is an orthogonal projection. Clearly $\mathcal{R}(WAWX) \subseteq \mathcal{R}(WA)$. Now since $X \in A\{1',2'\}$, it follows,
    \begin{equation*}
    \begin{split}
        r(A)&= r(AWXWA)\leq r(X)=r(XWAWX) \\& \leq r(WAWX) \leq r(WA) \leq r(A).
        \end{split}
    \end{equation*}

Thus, $r(WAWX)=r(WA)$ and $\mathcal{R}(WAWX)=\mathcal{R}(WA)$. Moreover, since $$X = A(WA)^{\#}W  Z = A(WA)^{\#}WA(WA)^{\#} WZ = AWA(WA)^{\#} (WA)^{\#} WZ,$$ it follows
    \begin{equation*}
        \mathcal{R}(X)=\mathcal{R}(AWA(WA)^{\#}(WA)^{\#}WZ) \subseteq \mathcal{R}(AW).
    \end{equation*}
    Hence $X=A^{\text{\textcircled{$\dagger$}}, W}$, which is unique.
\end{proof}
Note that equation (\ref{eq 18}) of Corollary $\ref{1,2,3,11}$ also gives a new representation of the weighted core inverse of $A$, provided $r(WAW)=r(A)$.

\begin{cor}
Let $A \in \mathbb{C}^{m \times n}, \ W \in \mathbb{C}^{n \times m}$ such that $r(WAW)=r(A)$. Then $X \in \mathbb{C}^{m \times n}$ is a weighted core inverse of $A$ if and only if 
\begin{equation*}
\begin{split}
    &AWXWA=A, \ XWAWX=X, \\& (WAWX)^*=WAWX, \ (XW)(AW)^2=AW.
    \end{split}
\end{equation*}
\end{cor}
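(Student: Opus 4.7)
The plan is to observe that the four displayed equations are exactly the conditions $(1'),(2'),(3'),({1^{1}}')$ of Definition \ref{def} with $k=1$. Thus the corollary amounts to the assertion that, under $r(WAW)=r(A)$, $X$ is the weighted core inverse of $A$ if and only if $X\in A\{1',2',3',{1^{1}}'\}$. Both implications then reduce to Corollary \ref{1,2,3,11}, which states that when $\text{ind}(A,W)\le 1$ and $r(WAW)=r(A)$, the unique element of $A\{1',2',3',{1^{1}}'\}$ is $A^{\text{\textcircled{$\#$}},W}$.

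For the forward direction, the existence of the weighted core inverse forces $\text{ind}(A,W)\le 1$ by definition, so Corollary \ref{1,2,3,11} immediately gives $A^{\text{\textcircled{$\#$}},W}\in A\{1',2',3',{1^{1}}'\}$ and hence $X$ satisfies the four equations. For the backward direction, assume $X$ satisfies the four equations. The relation $(XW)(AW)^2=AW$ yields $\mathcal{R}(AW)\subseteq\mathcal{R}((AW)^2)$, so $r(AW)=r((AW)^2)$ and $\text{ind}(AW)\le 1$. By the classical Flanders-type identity $r((AW)^k)-r((AW)^{k+1})=r((WA)^k)-r((WA)^{k+1})$ for every $k\ge 1$ (a consequence of the matching nonzero Jordan structure of $AW$ and $WA$), one also obtains $r(WA)=r((WA)^2)$, so $\text{ind}(WA)\le 1$, and hence $\text{ind}(A,W)\le 1$. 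Corollary \ref{1,2,3,11} now yields $X=A^{\text{\textcircled{$\#$}},W}$.

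The main obstacle lies in the backward direction: extracting $\text{ind}(WA)\le 1$ from the four equations, for which one must invoke the rank-jump matching between $AW$ and $WA$. A self-contained alternative is to verify the weighted core-EP characterization recalled in the introduction directly: $(2')$ and $(3')$ are given, $X(WA)^3=AWA$ follows by right-multiplying $({1^{1}}')$ by $A$, and $\mathcal{R}(X)=\mathcal{R}(AW)$ follows from the idempotency of $XWAW$ (which is immediate from $(1'),(2')$) together with $X=XWAWX$ and the rank chain $r(X)\le r(WAW)=r(A)=r(AW)\le r(X)$.
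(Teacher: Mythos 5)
Your overall strategy is the same as the paper's: the four displayed equations are exactly $(1'),(2'),(3'),({1^{1}}')$ of Definition~\ref{def} with $k=1$, and the paper's proof is precisely the one-line reduction to Corollary~\ref{1,2,3,11}. You are also right to flag that this reduction is only ``direct'' in the backward direction once $\text{ind}(A,W)\le 1$ has been extracted from the four equations --- a point the paper silently skips --- so attempting to supply that step is a genuine improvement. The problem is that the tool you use for it is false. The claimed identity $r((AW)^k)-r((AW)^{k+1})=r((WA)^k)-r((WA)^{k+1})$ for $k\ge 1$ does not hold in general: take $A=\left[\begin{smallmatrix}0&1\\0&0\end{smallmatrix}\right]$ and $W=\left[\begin{smallmatrix}1&0\\0&0\end{smallmatrix}\right]$, so that $AW=0$ while $WA=A$ is nilpotent of index $2$; at $k=1$ the left-hand side is $0$ and the right-hand side is $1$. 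Flanders' theorem matches the Jordan structures of $AW$ and $WA$ only on the nonzero spectrum, and the rank jump $r(M^k)-r(M^{k+1})$ counts nilpotent Jordan blocks of size exceeding $k$, which need not agree; the theorem only bounds the discrepancy in block sizes by one, so $\text{ind}(AW)\le 1$ by itself yields $\text{ind}(WA)\le 2$, not $\le 1$.

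The conclusion you want is nonetheless true, but you must use the standing hypothesis $r(WAW)=r(A)$ (equivalently $r(AW)=r(WA)=r(A)$), which your index argument never invokes. For instance: $r(AW)=r(A)$ gives $\mathcal{R}(AW)=\mathcal{R}(A)$, and $({1^{1}}')$ gives $r((AW)^2)=r(AW)$, hence $\mathcal{N}(AW)\cap\mathcal{R}(A)=\mathcal{N}(AW)\cap\mathcal{R}(AW)=\{0\}$ and so $r(AWA)=r(A)$; since $r(WA)=r(A)$ means $W$ is injective on $\mathcal{R}(A)\supseteq\mathcal{R}(AWA)$, we obtain $r((WA)^2)=r(WAWA)=r(AWA)=r(WA)$, i.e.\ $\text{ind}(WA)\le 1$. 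Your ``self-contained alternative'' does not escape the issue either: the core-EP characterization quoted in the introduction is stated for $k\ge\text{ind}(A,W)$, so identifying the resulting object with the weighted \emph{core} inverse again presupposes $\text{ind}(A,W)\le 1$; moreover, $\mathcal{R}(X)\subseteq\mathcal{R}(AW)$ does not follow from the idempotency of $XWAW$ --- it follows from $({1^{1}}')$, which gives $\mathcal{R}(AW)\subseteq\mathcal{R}(X)$, combined with the equality of ranks in your chain.
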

\begin{proof}
    The proof is direct by Corollary $\ref{1,2,3,11}$ and Definition $\ref{def}$.
\end{proof}
To get the necessary condition for the uniqueness of $\{1',2',3',{1^{k}}'\} $-inverse of $A$, the following representation of the set is useful.
\begin{thm} \label{full set}
 Let $A \in \mathbb{C}^{m \times n}, \ W \in \mathbb{C}^{n \times m}$ such that $k= \text{ind}(A,W)$. Then
\begin{equation*} 
\begin{split}
    A&\{1',2',3',{1^{k}}'\}=\{A^{\text{\textcircled{$-$}}, W}+(I_m-(WAW)^{\dagger}WAW)U((WAW)^{\dagger} \\&-(WAW)^{\dagger}WAWA^{D,W}WAW(WAW)^{\dagger}) 
    \mid U \in \mathbb{C}^{m \times m}, r(WAW)=r(A) \}
    \end{split}
\end{equation*}
and
\begin{equation*}
\begin{split}
    A\{1',2'&,4',{{}^{k}1}'\}= \{A_{\text{\textcircled{$-$}},w}+((WAW)^{\dagger}-(WAW)^{\dagger}WAWA^{D,W}WAW \\ & (WAW)^{\dagger})  V(I_m -WAW(WAW)^{\dagger}) 
    \mid V \in \mathbb{C}^{m \times m}, r(WAW)=r(A) \}. 
    \end{split}
\end{equation*}

\end{thm}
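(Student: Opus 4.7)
The plan is to combine the parametric description of $A\{1',2',3'\}$ from Theorem~\ref{lemma 7} with the ``bridge'' formula $Z = X + (I_m - XWAW)A^{D,W}WAWX$ supplied by Theorem~\ref{exist amd rep}. Since Theorem~\ref{exist amd rep} already shows that every element of $A\{1',2',3',{1^{k}}'\}$ arises in this way from some $X \in A\{1',2',3'\}$, and Theorem~\ref{lemma 7} writes every such $X$ as $X = (WAW)^{\dagger} + (I_m - P)U(WAW)^{\dagger}$ with $P := (WAW)^{\dagger}WAW$ and $U \in \mathbb{C}^{m \times m}$ free, the theorem reduces to substituting the latter expression into the former and regrouping the result as $A^{\text{\textcircled{$-$}},W}$ plus a $U$-dependent remainder.

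The key simplification is the identity $WAW(I_m - P) = 0$, which is immediate from $WAW(WAW)^{\dagger}WAW = WAW$. This collapses $WAWX$ to the $U$-free projector $Q := WAW(WAW)^{\dagger}$, and consequently gives $XWAW = P + (I_m - P)UP$ and $I_m - XWAW = (I_m - P)(I_m - UP)$. Substituting into the bridge formula and expanding,
\[ Z = (WAW)^{\dagger} + (I_m - P)U(WAW)^{\dagger} + (I_m - P)A^{D,W}Q - (I_m - P)UPA^{D,W}Q. \]
The two $U$-free summands are precisely the representation of $A^{\text{\textcircled{$-$}},W}$ recorded in Remark~\ref{remark 13}, while the two $U$-dependent summands factor as $(I_m - P)U\bigl[(WAW)^{\dagger} - PA^{D,W}Q\bigr]$. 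Written with $P$ and $Q$ expanded, this is exactly the claimed parametric set. Running the substitution backwards shows that every matrix of this form arises as some bridge-image of a legitimate $X \in A\{1',2',3'\}$, hence lies in $A\{1',2',3',{1^{k}}'\}$, so the two sets coincide.

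The main obstacle is bookkeeping rather than depth: one must organize the algebra so that the ``base'' member $X_0 = (WAW)^{\dagger}$ of $A\{1',2',3'\}$ contributes, under the bridge map, exactly the canonical element $A^{\text{\textcircled{$-$}},W}$, and then verify that the remaining $U$-dependent part collapses into the single bracketed factor $(WAW)^{\dagger} - PA^{D,W}Q$. The dual statement for $A\{1',2',4',{{}^{k}1}'\}$ follows by an entirely parallel computation: use the $\{1',2',4'\}$-parametrization $X = (WAW)^{\dagger} + (WAW)^{\dagger}V(I_n - Q)$ from Theorem~\ref{lemma 7}, the dual bridge formula $Z = X + XWAW \, A^{D,W}(I_n - WAWX)$ from Theorem~\ref{exist amd rep}, and the companion identity $(I_n - Q)WAW = 0$, which analogously reduces $XWAW$ to the $V$-free projector $P$ and isolates the dual canonical element $A_{\text{\textcircled{$-$}},W}$ via the analogue of Remark~\ref{remark 13}.
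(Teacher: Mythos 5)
Your proposal is correct and follows essentially the same route as the paper: substitute the parametrization of $A\{1',2',3'\}$ from Theorem~\ref{lemma 7} into the bridge formula $X+(I_m-XWAW)A^{D,W}WAWX$ of Theorem~\ref{exist amd rep}, use $WAW(I_m-(WAW)^{\dagger}WAW)=0$ to collapse the expansion, and identify the $U$-free part with $A^{\text{\textcircled{$-$}},W}$ via Remark~\ref{remark 13}. The projector bookkeeping with $P$ and $Q$ is just a tidier presentation of the paper's direct expansion.
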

\begin{proof} $\displaystyle$  By Theorem \ref{lemma 7}, any element of the set $A\{1',2',3'\}$, say $Y$, can be written in the form $Y=(WAW)^{\dagger}+ (I_m-(WAW)^{\dagger} WAW) U (WAW)^{\dagger}$ for some $U \in \mathbb{C}^{m\times m}$. Substituting $Y$ for $X$ into $X + (I_m-XWAW )A^{D,W}WAW X $, which is a representation of the element of the set $A\{1',2',3',{1^{k}}'\}$ obtained in the equation $(\ref{1,2,3,1m})$, we obtain
    \begin{equation*}
    \begin{split}
        &(WAW)^{\dagger}+ (I_m-(WAW)^{\dagger}WAW)U(WAW)^{\dagger} +(I_m-((WAW)^{\dagger}+ (I_m \\ &- (WAW)^{\dagger}WAW)U(WAW)^{\dagger})WAW)A^{D,W}WAW(WAW)^{\dagger}\\
        &=(WAW)^{\dagger}+(I_m-(WAW)^{\dagger}WAW)U(WAW)^{\dagger} \\& +(I_m-(WAW)^{\dagger}WAW)A^{D,W}WAW(WAW)^{\dagger}\\ &-(I_m-(WAW)^{\dagger}WAW)U(WAW)^{\dagger}WAWA^{D,W}WAW(WAW)^{\dagger}\\
        &=A^{\text{\textcircled{$-$}},w}+(I_m-(WAW)^{\dagger}WAW)U((WAW)^{\dagger} \\&-(WAW)^{\dagger}WAWA^{D,W}WAW(WAW)^{\dagger}).
    \end{split}
    \end{equation*}
    Likewise, representation for $A\{1',2',4',{{}^{k}1}'\}$ can be obtained.
\end{proof}
The following result is valid for all rectangular matrices $A \in \mathbb{C}^{n \times m}_r, B \in \mathbb{C}^{m \times t}_s$ and needed for the theorem that follows. The proof is included for completeness.
\begin{lem} \label{AWB=0}
    Let $A \in \mathbb{C}^{n \times m}_r, B \in \mathbb{C}^{m \times t}_s$. If $AUB=0$ for all $U \in \mathbb{C}^{m \times m} $, then $A=0$ or $B=0$.
\end{lem}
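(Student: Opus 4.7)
I would prove the statement by contraposition: assume both $A \neq 0$ and $B \neq 0$, and exhibit an explicit $U \in \mathbb{C}^{m \times m}$ with $AUB \neq 0$.

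The plan is to use elementary matrices. Since $A \neq 0$, there exist indices $i, j$ with $a_{ij} \neq 0$, and since $B \neq 0$, there exist indices $p, q$ with $b_{pq} \neq 0$. Let $e_j \in \mathbb{C}^m$ denote the $j$-th standard basis column vector, and similarly for $e_p$. Set $U = e_j e_p^{T} \in \mathbb{C}^{m \times m}$, the matrix whose only nonzero entry is a $1$ at position $(j, p)$.

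Then $AUB = (A e_j)(e_p^{T} B)$, which is the outer product of the $j$-th column of $A$ with the $p$-th row of $B$. The $(i, q)$-entry of this matrix is exactly $a_{ij} b_{pq} \neq 0$, so $AUB \neq 0$. This contradicts the hypothesis, completing the proof.

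There is really no obstacle here — the lemma is a routine fact about bilinear pairings on matrix spaces, and the rank-one choice $U = e_j e_p^{T}$ isolates a single product of scalars $a_{ij} b_{pq}$. The rank information $r(A) = r$ and $r(B) = s$ stated in the hypothesis is not needed for the argument; it suffices that the matrices are nonzero.
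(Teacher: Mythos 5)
Your proof is correct, and it takes a genuinely different route from the paper. The paper argues by contradiction using the rank normal forms $A = P_1 \left[\begin{smallmatrix} I_r & 0 \\ 0 & 0 \end{smallmatrix}\right] Q_1$ and $B = P_2 \left[\begin{smallmatrix} I_s & 0 \\ 0 & 0 \end{smallmatrix}\right] Q_2$, then chooses $U = Q_1^{-1} P_2^{-1}$ so that $AUB = P_1 \left[\begin{smallmatrix} I_{\min(r,s)} & 0 \\ 0 & 0 \end{smallmatrix}\right] Q_2 \neq 0$. Your choice $U = e_j e_p^{T}$ is more elementary: it avoids any decomposition, requires no invertible factors, and isolates the single scalar $a_{ij} b_{pq}$, which makes the argument essentially entry-level linear algebra and transparently generalizes to matrices over any ring without zero divisors. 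What the paper's choice buys in exchange is a structural bonus: its $U$ realizes the maximum possible rank $\min(r,s)$ of $AUB$, which is more than the lemma needs but consistent with the paper's general habit of working through canonical forms. You are also right that the rank subscripts $r$ and $s$ in the hypothesis play no essential role in either argument beyond encoding that the matrices are nonzero (in the paper's proof they merely label the identity blocks).
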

\begin{proof}
    Using the rank normal form of $A$ and $B$, there exists $P_1 \in \mathbb{C}^{n \times n}$, $P_2 \in \mathbb{C}^{m \times m}$, $Q_1 \in \mathbb{C}^{m \times m}$ and $Q_2 \in \mathbb{C}^{t \times t}$ such that $P_1, Q_1, P_2, Q_2$ are invertible matrices and
    \begin{equation*}
        A= P_1 \begin{bmatrix}
    I_r & 0 \\
    0 & 0 
    \end{bmatrix} Q_1, \ \text{and} \  B= P_2 \begin{bmatrix}
    I_s & 0 \\
    0 & 0 
    \end{bmatrix} Q_2.
    \end{equation*}
    Let $A \neq 0$, $B \neq 0$ and $AUB=0 $ for all $U \in \mathbb{C}^{m \times m} $. Consider
    \begin{equation*}
         AQ_1^{-1}P_2^{-1}B =P_1 \begin{bmatrix}
    I_{min(r,s)} & 0 \\
    0 & 0 
    \end{bmatrix} Q_2 \neq 0,
    \end{equation*}
    which contradicts the hypothesis, hence $A=0$ or $B=0$.
\end{proof}
The following theorem gives the sufficient and necessary condition for the uniqueness of the $\{1',2',3',{1^{k}}'\} $-inverse of $A$.
\begin{thm}
    Let $A \in \mathbb{C}^{m \times n}, \ W \in \mathbb{C}^{n \times m}$ such that $k= \text{ind}(A,W) \leq 1$. Consequently, the following are equivalent
    \begin{enumerate}
        \item[(i)] $k \leq 1$.
        \item[(ii)] $A$ has a unique $\{1',2',3',{1^{k}}'\}$-inverse.
        \item[(iii)] $A$ has a unique $\{1',2',4',{{}^{k}1}'\}$-inverse.
    \end{enumerate}
\end{thm}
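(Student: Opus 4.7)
The plan is to establish the three-way equivalence by proving (i)$\Rightarrow$(ii), (i)$\Rightarrow$(iii) together, and then the single nontrivial converse (ii)$\Rightarrow$(i), with (iii)$\Rightarrow$(i) following by a dual argument.

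The implications (i)$\Rightarrow$(ii) and (i)$\Rightarrow$(iii) are essentially already in hand: Corollary \ref{1,2,3,11} shows that when $k\le 1$, every $\{1',2',3',{1^{k}}'\}$-inverse equals the weighted core inverse $A^{\text{\textcircled{$\#$}},W}$, and the corresponding dual statement handles the $\{1',2',4',{{}^{k}1}'\}$ case.

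For (ii)$\Rightarrow$(i), I would use the parametrization given in Theorem \ref{full set}: uniqueness forces
\[
(I_m-(WAW)^{\dagger}WAW)\,U\,\big((WAW)^{\dagger}-(WAW)^{\dagger}WAWA^{D,W}WAW(WAW)^{\dagger}\big)=0
\]
for every $U\in\mathbb{C}^{m\times m}$. Lemma \ref{AWB=0} then forces one of the two bracketed factors to vanish, splitting the proof into two cases. In \emph{Case A} the identity $(WAW)^{\dagger}WAW=I_m$ gives $r(WAW)=m$, so by Remark \ref{rem2.1} the matrix $AW\in\mathbb{C}^{m\times m}$ is invertible (hence $\mathrm{ind}(AW)=0$); writing $(WA)^2=W(AW)A$ and using invertibility of $AW$ gives $r((WA)^2)=r(WA)$, so $\mathrm{ind}(WA)\le 1$ and therefore $k\le 1$.

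In \emph{Case B} the vanishing factor is
$(WAW)^{\dagger}=(WAW)^{\dagger}WAWA^{D,W}WAW(WAW)^{\dagger}$. Using the identity $WA^{D,W}=(WA)^{D}$, which follows directly from $A^{D,W}=A[(WA)^{D}]^{2}$ and the Drazin cancellation $(WA)[(WA)^D]^{2}=(WA)^D$, this equation becomes (after left multiplication by $WAW$)
\[
P=P\,E\,P,\qquad P:=WAW(WAW)^{\dagger},\quad E:=WA(WA)^{D}.
\]
Since $\mathcal{R}(E)=\mathcal{R}((WA)^{k})\subseteq\mathcal{R}(WA)=\mathcal{R}(P)$, we have $PE=E$, and $P=PEP$ collapses to $P=EP$. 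Reading this as a pointwise statement shows $\mathcal{R}(P)\subseteq\mathrm{Fix}(E)=\mathcal{R}((WA)^{k})$, so $\mathcal{R}(WA)=\mathcal{R}((WA)^{k})$, which forces $\mathrm{ind}(WA)\le 1$. To promote this to $\mathrm{ind}(AW)\le 1$, I would argue by contradiction: if $0\ne v\in\mathcal{R}(AW)\cap\mathcal{N}(AW)$ with $v=AWu$, then $Wv=WAWu\in\mathcal{R}(WA)$ and $AWv=0$ combined with $\mathcal{N}(A)=\mathcal{N}(WA)$ (from $r(WA)=r(A)$) gives $Wv\in\mathcal{N}(WA)$, so $Wv\in\mathcal{R}(WA)\cap\mathcal{N}(WA)=\{0\}$; then $v\in\mathcal{R}(A)\cap\mathcal{N}(W)$, a space of dimension $r(A)-r(WA)=0$, contradicting $v\ne 0$. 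Hence $k\le 1$.

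The implication (iii)$\Rightarrow$(i) proceeds by the dual representation in Theorem \ref{full set}, with Case A yielding $r(WAW)=m$ (so $WA$ becomes invertible) and Case B yielding $\mathrm{ind}(AW)\le 1$ together with the symmetric kernel-range argument. The step I expect to be most delicate is Case B of the converse, specifically the passage from the algebraic identity $P=PEP$ to the range-stabilization $\mathcal{R}(WA)=\mathcal{R}((WA)^{k})$ and then the transfer of the index bound from $WA$ to $AW$ using only $r(WAW)=r(A)$; the two small range/kernel arguments above are the technical core of the proof.
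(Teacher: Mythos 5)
Your proposal is correct and follows the same skeleton as the paper's proof: $(i)\Rightarrow(ii)$ via Corollary \ref{1,2,3,11}, and $(ii)\Rightarrow(i)$ by forcing $(I_m-(WAW)^{\dagger}WAW)\,U\,((WAW)^{\dagger}-(WAW)^{\dagger}WAWA^{D,W}WAW(WAW)^{\dagger})=0$ for all $U$, invoking Lemma \ref{AWB=0}, and splitting into the same two cases. The only genuine divergence is in Case B. The paper multiplies the vanishing factor on both sides by $WAW$ to get $WAW=WAWA^{D,W}WAW$, lifts this to $A=AWA^{D,W}WA$ using the factorization $A=U^*WAWV$ available from $r(WAW)=r(A)$, and concludes at once that $A^{D,W}$ is the weighted group inverse, so $\mathrm{ind}(AW)\le 1$ and $\mathrm{ind}(WA)\le 1$ simultaneously. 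You instead pass to the idempotent identity $P=EP$ with $P=WAW(WAW)^{\dagger}$ and $E=WA(WA)^D$, deduce $\mathcal{R}(WA)=\mathcal{R}((WA)^k)$, and then transfer the index bound to $AW$ by a separate kernel--range intersection argument using $r(WAW)=r(A)$. Both routes are valid; the paper's is shorter because the single identity $A=AWA^{D,W}WA$ controls both indices at once, while yours is more geometric and makes explicit exactly where the rank hypothesis enters. (Your Case A is also fine, and in fact slightly more careful than the paper's, which asserts that $WA$ is invertible when only $\mathrm{ind}(WA)\le 1$ is guaranteed for $m<n$.)
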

\begin{proof} 
     $(i) \implies (ii)$. It follows by Corollary $\ref{1,2,3,11}$.\\
     $(ii) \implies (i)$. Let $A$ have unique $\{1',2',3',{1^{k}}'\}$-inverse $A^{\text{\textcircled{$-$}},w} \in A\{1',2',3',{1^{k}}'\}$. By Theorem \ref{exist amd rep}, 
     \begin{equation*}
     \begin{split}
        A^{\text{\textcircled{$-$}},w}&+(I_m-(WAW)^{\dagger}WAW) U ((WAW)^{\dagger} \\&-(WAW)^{\dagger}WAWA^{D,W}WAW(WAW)^{\dagger}) \in A\{1',2',3',{1^{k}}'\}
        \end{split}
     \end{equation*} for all $U \in \mathbb{C}^{m \times m}$. Hence both should be equal, and we get
     \begin{equation*}
     \begin{split}
         (I_m{-}(WAW)^{\dagger}WAW)U((WAW)^{\dagger}{-}(WAW)^{\dagger}WAWA^{D,W}WAW(WAW)^{\dagger}) \\ {=}0.
         \end{split}
     \end{equation*}
     Then by Lemma $\ref{AWB=0}$, either $I_m=(WAW)^{\dagger}WAW$ or $$(WAW)^{\dagger}= (WAW)^{\dagger}WAWA^{D,W}WAW(WAW)^{\dagger}.$$ $I_m=(WAW)^{\dagger}WAW$ implies $AW$ and $WA$ are invertible matrices, hence $k=0$. In the latter case,
     \begin{equation*}
         \begin{split}
         WAW&=WAW(WAW)^{\dagger}WAW\\
             &= WAW((WAW)^{\dagger}WAWA^{D,W}WAW(WAW)^{\dagger})WAW\\
             &= WAWA^{D,W}WAW.
         \end{split}
     \end{equation*}
     Since $r(WAW)=r(A)$, using Corollary \ref{A(1,2,3)=waw(1,2,3)}, $A=AWA^{D,W}WA $ and $A^{D,W}=A^{\#,W}$. Thus, $k \leq 1$.\\
     $(i)\iff (iii)$ is similar to $(i) \iff (ii)$.
\end{proof}
Now two canonical representations will be derived for $\{1',2',3',{1^{k}}'\}$-inverse using singular value decomposition and core-nilpotent decomposition.  For the first representation, in the following theorem, we recall a canonical representation of $A^{D,W}$ of $A \in \mathbb{C}^{m \times n}$, obtained using the singular value decompositions (SVD) of $A \in \mathbb{C}^{m \times n}_r$ and $W \in \mathbb{C}^{n \times m}_s$ given as follows \cite{ben2003generalized,horn2012matrix}
\begin{equation} \label{20}
    A=K\begin{bmatrix}
        \Sigma_1 & 0 \\
        0 &0 
    \end{bmatrix}L^*  \ \text{and} \ W=M \begin{bmatrix}
        \Sigma_2 & 0 \\
        0 &0 
    \end{bmatrix}N^*,
\end{equation}
where $\Sigma_1$ and $\Sigma_2$ are diagonal matrices with diagonal entries as the singular values of $A$ and $W$, respectively. $K \in \mathbb{C}^{m \times m},\ L \in \mathbb{C}^{n \times n}, \ M \in \mathbb{C}^{n \times n} \ \text{and}  \ N\in \mathbb{C}^{m \times m}$ are unitary matrices. 
\begin{thm} \cite{meng2017dmp}
    Let $A \in \mathbb{C}^{m \times n}_r$ and $W \in \mathbb{C}^{n \times m}_s$ having SVD as in $(\ref{20})$. Then
    \begin{equation}  \label{aDW}
        {A^{D,W}}{=}K\begin{bmatrix}
            {\Sigma_1 M_1[(\Sigma_2 K_1 \Sigma_1 M_1)^D]^2} & {\Sigma_1 M_1[(\Sigma_2 K_1 \Sigma_1 M_1)^D]^3 \Sigma_2 K_1 \Sigma_1 M_2} \\
            0 & 0
        \end{bmatrix} {M^*},
    \end{equation}
    where \begin{equation*}
    N^*K= \begin{bmatrix}
    K_1 & K_2 \\
    K_3 & K_4
\end{bmatrix}, \ L^*M= \begin{bmatrix}
    M_1 & M_2 \\
    M_3 & M_4
\end{bmatrix}, \ K_1 \in \mathbb{C}^{s \times r}, \  \text{and} \ M_1 \in \mathbb{C}^{r \times s}.
\end{equation*}
\end{thm}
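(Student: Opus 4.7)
The plan is to verify that the right-hand side of \eqref{aDW}, call it $X$, satisfies the three defining equations $({1^k}')$, $(2')$, and $(5')$ of the WDI for $k$ sufficiently large, and then to appeal to the uniqueness of $A^{D,W}$. First, substituting the SVDs \eqref{20} together with the partitions of $N^{*}K$ and $L^{*}M$ gives
\begin{equation*}
AW = K\begin{bmatrix}\Sigma_1 M_1\Sigma_2 & 0\\ 0 & 0\end{bmatrix}N^{*}, \qquad WA = M\begin{bmatrix}\Sigma_2 K_1\Sigma_1 & 0\\ 0 & 0\end{bmatrix}L^{*}.
\end{equation*}
Setting $P := \Sigma_1 M_1$, $Q := \Sigma_2 K_1$, and $T := QP = \Sigma_2 K_1 \Sigma_1 M_1 \in \mathbb{C}^{s\times s}$, the telescoping identity $(PQ)^{\ell-1}P = P(QP)^{\ell-1} = PT^{\ell-1}$ gives, by induction, $(AW)^{\ell} = K\begin{bmatrix}PT^{\ell-1}\Sigma_2 & 0\\ 0 & 0\end{bmatrix}N^{*}$ for every $\ell \geq 1$. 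Thus the spectral data of $AW$ is encoded by the single square matrix $T$.

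From this, $W(AW)^{k+1} = M\begin{bmatrix}T^{k+1}\Sigma_2 & 0\\ 0 & 0\end{bmatrix}N^{*}$, and multiplying on the left by $X$ collapses to $XW(AW)^{k+1} = K\begin{bmatrix}P(T^{D})^{2}T^{k+1}\Sigma_2 & 0\\ 0 & 0\end{bmatrix}N^{*}$. For $k$ large enough that the Drazin identity $(T^{D})^{2}T^{k+1} = T^{k-1}$ applies, this equals $(AW)^{k}$, verifying $({1^k}')$. Equation $(2')$ is obtained similarly: expanding $XWAWX$ in block form produces the factor $T(T^{D})^{2} = T^{D}$ in the nonzero entries of $X$, recovering $X$ itself.

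The main obstacle is verifying the commutativity equation $(5')$, because $AWX$ naturally lives in the coordinate frame $K[\,\cdot\,]M^{*}$ while $XWA$ lives in $K[\,\cdot\,]L^{*}$. Using $PQP = PT$ and the identities $T(T^{D})^{2} = T^{D}$, $T(T^{D})^{3} = (T^{D})^{2}$, direct block computation gives
\begin{align*}
AWX &= K\begin{bmatrix}PT^{D} & P(T^{D})^{2}\Sigma_2 K_1 \Sigma_1 M_2\\ 0 & 0\end{bmatrix}M^{*}, \\
XWA &= K\begin{bmatrix}P(T^{D})^{2}\Sigma_2 K_1 \Sigma_1 & 0\\ 0 & 0\end{bmatrix}L^{*}.
\end{align*}
To match them I would insert $L^{*} = (L^{*}M)M^{*}$ into $XWA$ and absorb the change-of-coordinates $L^{*}M = \begin{bmatrix}M_1 & M_2\\ M_3 & M_4\end{bmatrix}$ into the bracketed block: its $(1,1)$ entry becomes $P(T^{D})^{2}\Sigma_2 K_1 \Sigma_1 M_1 = P(T^{D})^{2}T = PT^{D}$, and the $(1,2)$ entry is $P(T^{D})^{2}\Sigma_2 K_1 \Sigma_1 M_2$, matching $AWX$. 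Hence $AWX = XWA$, and the uniqueness of the WDI forces $X = A^{D,W}$.
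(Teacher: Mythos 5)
Your verification is correct, but note that the paper does not prove this statement at all: it is quoted verbatim from Meng's paper on the DMP inverse for rectangular matrices, with only a citation, so any self-contained argument is by construction a different route. Your computations check out: with $P=\Sigma_1M_1$, $Q=\Sigma_2K_1$, $T=QP$, the block forms of $AW$, $WA$, $(AW)^{\ell}$ and the candidate $X$ are exactly as you state, the identity $\Sigma_2K_1\Sigma_1M_1=T$ does collapse $XWAWX$ to $X$, and the change-of-frame trick $L^{*}=(L^{*}M)M^{*}$ correctly turns the $(1,1)$ block of $XWA$ into $P(T^{D})^{2}T=PT^{D}$, matching $AWX$. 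The one point you should make explicit is the index bookkeeping in $({1^{k}}')$: the step $(T^{D})^{2}T^{k+1}=T^{k-1}$ needs $k-1\geq \mathrm{ind}(T)$, and $\mathrm{ind}(T)$ is not a priori bounded by $\mathrm{ind}(A,W)$ in an obvious way, so you are really only verifying the system for all sufficiently large $k$. This is harmless, but only because of two facts you should state: the solution set of $({1^{k}}')$ together with $(5')$ is non-decreasing in $k$ (indeed $({1^{k}}')$ alone implies $({1^{k+1}}')$ by right-multiplying by $AW$), and the Cline--Greville uniqueness theorem asserts a \emph{unique} solution of the system for \emph{every} $k\geq\mathrm{ind}(A,W)$, so satisfying it for one large $k$ already forces $X=A^{D,W}$. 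With that sentence added, your proof is complete; it is arguably more elementary than deriving the formula from the representation $A^{D,W}=[(AW)^{D}]^{2}A$, since it never requires computing the Drazin inverse of a matrix written in two different unitary frames.
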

A canonical form for the $\{1',2',3',{1^{k}}'\}$-inverse of $A$ is now provided using the singular value decompositions of $A \in \mathbb{C}^{m \times n}_r$ and $W \in \mathbb{C}^{n \times m}_s$.
\begin{thm} \label{thm 3.10}
Let $A \in \mathbb{C}^{m \times n}_r$ and $W \in \mathbb{C}^{n \times m}_s$ having SVD as in $(\ref{20})$. Then
\begin{equation} \label{particular elt}
    A^{\text{\textcircled{$-$}},W}=N\begin{bmatrix}
        B^{\dagger}+(I_s-B^{\dagger}B)K_1\Sigma
        _1M_1B^DB^{\dagger} &0 \\ 0 & 0
    \end{bmatrix}M^*.
\end{equation}
Furthermore,  any element $X \in A\{1',2',3',{1^{k}}'\}$ is given by
\begin{equation} \label{full set 2}
    X=A^{\text{\textcircled{$-$}},W}+N\begin{bmatrix}
        (I_s-B^{\dagger}B)U_1B^{\dagger}(I-BK_1\Sigma_1M_1B^DB^{\dagger}) & 0 \\
        I_{m-s}U_3B^{\dagger}(I-BK_1\Sigma_1M_1B^DB^{\dagger}) & 0
    \end{bmatrix}M^*,
\end{equation}
where \begin{equation*}
\begin{split}
    N^*K= \begin{bmatrix}
    K_1 & K_2 \\
    K_3 & K_4
\end{bmatrix}, \ L^*M= \begin{bmatrix}
    M_1 & M_2 \\
    M_3 & M_4
\end{bmatrix}, \ K_1 \in \mathbb{C}^{s \times r}, \   \ M_1 \in \mathbb{C}^{r \times s}, \\ 
\ B= \Sigma_2K_1\Sigma_1M_1\Sigma_2 , \  U_1 \in \mathbb{C}^{s \times s} \ \text{and} \  U_3 \in \mathbb{C}^{m-s \times s}.
\end{split}
    \end{equation*}
\end{thm}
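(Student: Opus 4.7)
The plan is to derive the canonical form (\ref{particular elt}) by substituting the SVDs (\ref{20}) into the particular-element representation from Remark \ref{remark 13},
\begin{equation*}
A^{\text{\textcircled{$-$}},W} = (WAW)^\dagger + (I_m - (WAW)^\dagger WAW)\,A^{D,W}\,WAW\,(WAW)^\dagger,
\end{equation*}
and then to obtain (\ref{full set 2}) by applying the full-set parameterization of Theorem \ref{full set}.

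First I would compute, by direct block multiplication,
\begin{equation*}
WAW = M\begin{bmatrix} B & 0 \\ 0 & 0 \end{bmatrix} N^*,
\end{equation*}
so that the unitarity of $M$ and $N$ immediately gives $(WAW)^\dagger = N\bigl[\begin{smallmatrix}B^\dagger & 0 \\ 0 & 0\end{smallmatrix}\bigr]M^*$ and $I_m - (WAW)^\dagger WAW = N\bigl[\begin{smallmatrix}I_s-B^\dagger B & 0 \\ 0 & I_{m-s}\end{smallmatrix}\bigr]N^*$. Next, I would substitute (\ref{aDW}) into $A^{D,W}\,WAW$; right-multiplication by $W$ annihilates the upper-right block of $A^{D,W}$, and the Drazin identity $(T^D)^2 T = T^D$ (with $T = \Sigma_2 K_1 \Sigma_1 M_1$, valid because $T^D T = T T^D$ and $T^D T T^D = T^D$) collapses the surviving term to
\begin{equation*}
A^{D,W}\,WAW = K\begin{bmatrix} \Sigma_1 M_1 T^D \Sigma_2 & 0 \\ 0 & 0 \end{bmatrix} N^*.
\end{equation*}

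Right-multiplying by $(WAW)^\dagger$ and expressing the leftmost $K$-factor in the $N$-basis via $K = N(N^*K)$ yields a block expression whose first block-column carries $K_1$ and $K_3$ factors. The central algebraic step is to reconcile $T^D\Sigma_2$ (arising from (\ref{aDW})) with $B^D$ (as required by the claim): using $B = T\Sigma_2$, the invertibility of $\Sigma_2$, the similarity $\Sigma_2 T = \Sigma_2 B \Sigma_2^{-1}$, and Cline's formula, one verifies that after left-multiplication by $(I_s - B^\dagger B)K_1\Sigma_1 M_1$ the contribution of $T^D\Sigma_2 B^\dagger$ coincides with that of $B^D B^\dagger$; the same identities, combined with the range inclusion $\mathcal{R}(A^{D,W})\subseteq\mathcal{R}((AW)^k)$ that characterises the weighted Drazin inverse, annihilate the $K_3$-row of the product. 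Adding $(WAW)^\dagger$ then produces exactly (\ref{particular elt}).

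For (\ref{full set 2}) I would apply Theorem \ref{full set}, which writes a generic element as $A^{\text{\textcircled{$-$}},W} + (I_m - (WAW)^\dagger WAW)\,U\,R$ with $R = (WAW)^\dagger - (WAW)^\dagger WAW A^{D,W} WAW (WAW)^\dagger$. Using the block forms established above, $R$ simplifies to
\begin{equation*}
R = N\begin{bmatrix} B^\dagger(I_s - B K_1 \Sigma_1 M_1 B^D B^\dagger) & 0 \\ 0 & 0 \end{bmatrix} M^*,
\end{equation*}
and parameterizing $U$ through $N^*UN = \bigl[\begin{smallmatrix}U_1 & U_2 \\ U_3 & U_4\end{smallmatrix}\bigr]$ with $U_1 \in \mathbb{C}^{s\times s}$ and $U_3 \in \mathbb{C}^{(m-s)\times s}$, only the first block-column of $U$ survives upon multiplying the three factors, giving (\ref{full set 2}) directly. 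The principal obstacle will be the $T^D\Sigma_2 \leftrightarrow B^D$ reconciliation in the central step; the remaining work is straightforward block-matrix bookkeeping.
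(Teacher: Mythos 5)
Your overall route (substitute the SVD blocks into the formula of Remark \ref{remark 13}, then pass to the general element via Theorem \ref{full set}) is the same as the paper's, and you are right to isolate the two reconciliation steps as the real content. The problem is that the second of them --- ``the same identities \ldots annihilate the $K_3$-row of the product'' --- is asserted rather than proved, and it is in fact false. With $T=\Sigma_2K_1\Sigma_1M_1$, the bookkeeping you set up gives
\begin{equation*}
(I_m-(WAW)^{\dagger}WAW)A^{D,W}WAW(WAW)^{\dagger}
=N\begin{bmatrix}(I_s-B^{\dagger}B)K_1\Sigma_1M_1T^{D}\Sigma_2B^{\dagger}&0\\[2pt] K_3\Sigma_1M_1T^{D}\Sigma_2B^{\dagger}&0\end{bmatrix}M^*,
\end{equation*}
since the lower-right block of $I_m-(WAW)^{\dagger}WAW$ in the $N$-basis is $I_{m-s}$, not $0$. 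The inclusion $\mathcal{R}(A^{D,W})\subseteq\mathcal{R}((AW)^k)$ constrains the columns of $A^{D,W}$ inside the $K$-basis and says nothing about how they sit relative to the $N$-basis splitting, so it cannot kill the $K_3$-row. Concretely, take $m=n=2$, $A=\begin{bmatrix}1&0\\0&0\end{bmatrix}$ (so $K=L=I_2$, $\Sigma_1=1$, $r=1$) and $W=\tfrac12\begin{bmatrix}1&1\\1&1\end{bmatrix}$ with $M=N=\tfrac1{\sqrt2}\begin{bmatrix}1&1\\1&-1\end{bmatrix}$, $\Sigma_2=1$, $s=1$. Then $K_1=K_3=M_1=\tfrac1{\sqrt2}$, $B=\tfrac12$, $A^{D,W}=\begin{bmatrix}4&0\\0&0\end{bmatrix}$, and the Remark \ref{remark 13} formula yields $A^{\text{\textcircled{$-$}},W}=\begin{bmatrix}2&2\\0&0\end{bmatrix}=N\begin{bmatrix}2&0\\2&0\end{bmatrix}M^*$, whose $(2,1)$ block equals $K_3\Sigma_1M_1T^{D}\Sigma_2B^{\dagger}=2\neq0$. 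Formula (\ref{particular elt}), by contrast, returns $(WAW)^{\dagger}=\begin{bmatrix}1&1\\1&1\end{bmatrix}$ because $I_s-B^{\dagger}B=0$, and that matrix fails $({1^{k}}')$. So this step cannot be repaired: the correct $(2,1)$ block is $K_3\Sigma_1M_1(T^{D})^2BB^{\dagger}$, and the displayed statement is itself defective there (the paper's own proof silently drops the same block).

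The first reconciliation, replacing $T^{D}\Sigma_2$ by $B^{D}$ in the $(1,1)$ block, is likewise left as an assertion. Since $B=T\Sigma_2$ with $\Sigma_2$ invertible but not commuting with $T$ in general, Cline's formula gives $B^{D}=T((\Sigma_2T)^{D})^2\Sigma_2$ with $\Sigma_2T=\Sigma_2^{2}K_1\Sigma_1M_1$, which is not $T$; you would have to exhibit the identity $(I_s-B^{\dagger}B)K_1\Sigma_1M_1T^{D}\Sigma_2B^{\dagger}=(I_s-B^{\dagger}B)K_1\Sigma_1M_1B^{D}B^{\dagger}$ explicitly, and ``one verifies'' does not do that. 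Your computations of $(WAW)^{\dagger}$, of the projector $I_m-(WAW)^{\dagger}WAW$, and the passage to (\ref{full set 2}) via Theorem \ref{full set} are fine; the two central claims are exactly where the argument, and the theorem as printed, break down.
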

\begin{proof}
Using equation (\ref{20}), we get
\begin{equation} \label{24}
\begin{split}
WAW&=M\begin{bmatrix}
    \Sigma_2K_1\Sigma_1M_1\Sigma_2 & 0\\
    0 & 0
\end{bmatrix}N^*, \ \text{which implies} \ \\
    (WAW)^{\dagger}&=N\begin{bmatrix}
    (\Sigma_2K_1\Sigma_1M_1\Sigma_2)^{\dagger} & 0\\
    0 & 0
\end{bmatrix}M^*.
\end{split}
\end{equation}
From Remark $(\ref{remark 13})$, 
\begin{equation}\label{24a}
    A^{\text{\textcircled{$-$}},W}=(WAW)^{\dagger}+ (I_m-(WAW)^{\dagger}WAW)A^{D,W}WAW(WAW)^{\dagger}.
\end{equation} By substituting the representations for $(WAW)^{\dagger}$ and $A^{D,W}$ from equations $(\ref{24})$ and $(\ref{aDW})$ respectively, into the equation (\ref{24a}), we get required representation  of $A^{\text{\textcircled{$-$}},W}$ given in equation $(\ref{particular elt})$, where $B= \Sigma_2K_1\Sigma_1M_1\Sigma_2$. By Theorem $\ref{full set}$, any element $X \in A\{1',2',3',{1^{k}}'\}$ can be written as 
\begin{equation}\label{24b}
\begin{split}
    X= A^{\text{\textcircled{$-$}},w}&+(I_m-(WAW)^{\dagger}WAW)U((WAW)^{\dagger} \\&-(WAW)^{\dagger}WAWA^{D,W}WAW(WAW)^{\dagger}).
\end{split}
\end{equation}
Consider the partition of $U$ as $U=N\begin{bmatrix}
    U_1 & U_2 \\
    U_3 & U_4
\end{bmatrix}N^{*}$, where $U_1 \in \mathbb{C}^{s \times s}$. Now by substituting the representations of $WAW, \  (WAW)^{\dagger}, \ A^{D,W}$ from equations (\ref{24}) and (\ref{aDW}) respectively and the partition form of $U$ into the equation (\ref{24b}), we get the required equation (\ref{full set 2}).
\end{proof}

For $A\in \mathbb{C}^{n \times n}$, the core-nilpotent decomposition \cite{wang2018generalized} of $A$ is given by $ A=P\begin{bmatrix}
    C & 0\\
    0 & N
\end{bmatrix}P^{-1}$, where $P, \ C$ are nonsingular matrices and $N$ is nilpotent matrix of index $k=\text{ind}(A)$. Now we recall the following result from \cite{wei2003integral}, which gives a canonical representation for $A^{D,W}$ of $A \in \mathbb{C}^{m \times n}$, obtained using the core-nilpotent decompositions of $AW$ and $WA$. 
\begin{thm}\cite{wei2003integral}
    Let $A \in \mathbb{C}^{m \times n}$, $W\in \mathbb{C}^{n \times m}$ and $k=\text{ind}(A,W)$. Then we have
    \begin{equation} \label{2003AMC}
    \begin{split}
    A=S\begin{bmatrix}
        A_1 &0 \\
        0 & A_2
    \end{bmatrix}R^{-1}, \ W =R\begin{bmatrix}
        W_1 & 0\\
        0 & W_2
    \end{bmatrix}S^{-1} \ \text{and}, \\ \ A^{D,W}=S\begin{bmatrix}
        (W_1A_1W_1)^{-1} &0 \\
        0 & A_2
    \end{bmatrix}R^{-1},
    \end{split}
\end{equation}
where $R,S,A_1,W_1$ are nonsingular matrices.
\end{thm}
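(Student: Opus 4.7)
The plan is to build the simultaneous block-diagonal structure of $A$ and $W$ from the core-nilpotent decompositions of $AW$ and $WA$, and then read off $A^{D,W}$ using the identity $A^{D,W}=A[(WA)^D]^2$ recorded in equation~(1) of the introduction.

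First I would invoke $k=\text{ind}(A,W)=\max\{\text{ind}(AW),\text{ind}(WA)\}$, which delivers the two direct-sum decompositions
\begin{equation*}
\mathbb{C}^m=\mathcal{R}((AW)^k)\oplus\mathcal{N}((AW)^k),\qquad \mathbb{C}^n=\mathcal{R}((WA)^k)\oplus\mathcal{N}((WA)^k).
\end{equation*}
Set $p=\dim\mathcal{R}((AW)^k)$ and $q=\dim\mathcal{R}((WA)^k)$. A short rank computation using $k\ge\text{ind}(AW),\text{ind}(WA)$ (namely $r((AW)^{k+1})=r(A(WA)^k)\le r((WA)^k)$ and the symmetric bound) gives $p=q=:t$.

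Second, the intertwining identities $A(WA)=(AW)A$ and $W(AW)=(WA)W$ imply that $A$ sends $\mathcal{R}((WA)^k)$ into $\mathcal{R}((AW)^k)$ and $\mathcal{N}((WA)^k)$ into $\mathcal{N}((AW)^k)$, and dually for $W$. I would pick bases of $\mathbb{C}^m$ respecting the first splitting and bases of $\mathbb{C}^n$ respecting the second, assembling them into invertible $S\in\mathbb{C}^{m\times m}$ and $R\in\mathbb{C}^{n\times n}$ whose first $t$ columns span the core parts. In these bases
\begin{equation*}
A=S\begin{bmatrix}A_1 & 0\\ 0 & A_2\end{bmatrix}R^{-1},\qquad W=R\begin{bmatrix}W_1 & 0\\ 0 & W_2\end{bmatrix}S^{-1},
\end{equation*}
with $A_1,W_1\in\mathbb{C}^{t\times t}$. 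Because the restriction $(AW)|_{\mathcal{R}((AW)^k)}$ is an isomorphism, its matrix $A_1W_1$ is invertible; the symmetric argument shows $W_1A_1$ is invertible. Since these are square blocks of the same size $t$, $A_1$ and $W_1$ are individually invertible, while $A_2W_2$ and $W_2A_2$ are nilpotent of index $\le k$.

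Finally, from this block form $WA=R\,\mathrm{diag}(W_1A_1,\,W_2A_2)\,R^{-1}$, and the Drazin inverse of a block-diagonal matrix with one invertible block and one nilpotent block is $(WA)^D=R\,\mathrm{diag}((W_1A_1)^{-1},\,0)\,R^{-1}$. Substituting into $A^{D,W}=A[(WA)^D]^2$ yields
\begin{equation*}
A^{D,W}=S\begin{bmatrix}A_1(W_1A_1)^{-2} & 0\\ 0 & 0\end{bmatrix}R^{-1}=S\begin{bmatrix}(W_1A_1W_1)^{-1} & 0\\ 0 & 0\end{bmatrix}R^{-1},
\end{equation*}
which is the asserted canonical form (the bottom-right block collapses to zero by nilpotency of $W_2A_2$). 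The main obstacle is the bookkeeping in the second step: one must verify that $p=q$ and that the bases can be chosen \emph{simultaneously} so that both $A$ and $W$ become block diagonal; once this alignment is achieved, the formula for $A^{D,W}$ is an immediate substitution.
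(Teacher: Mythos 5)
The paper does not prove this statement at all: it is recalled verbatim from the cited reference \cite{wei2003integral}, so there is no internal proof to compare yours against. Your argument is the standard derivation and it is essentially correct. The rank chain $r((AW)^{k+1})=r(A(WA)^kW)\le r((WA)^k)$ together with its symmetric counterpart gives $p=q$, the intertwining identities $A(WA)^k=(AW)^kA$ and $W(AW)^k=(WA)^kW$ justify choosing $S$ and $R$ adapted to the two core--nilpotent splittings so that $A$ and $W$ block-diagonalize simultaneously with $A_1,W_1$ square of the same size, and the invertibility of $A_1W_1$ and $W_1A_1$ (restrictions of $AW$, $WA$ to the core subspaces) forces $A_1$ and $W_1$ to be invertible while $A_2W_2$, $W_2A_2$ are nilpotent; substituting into $A^{D,W}=A[(WA)^D]^2$ then gives $A_1(W_1A_1)^{-2}=(W_1A_1W_1)^{-1}$ in the leading block. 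One point deserves emphasis: your computation yields a \emph{zero} block in the $(2,2)$ position of $A^{D,W}$, whereas the statement as printed has $A_2$ there. Your value is the correct one. It agrees with Wei's original formula, and the printed version is inconsistent with the defining equation $A^{D,W}WAWA^{D,W}=A^{D,W}$, which in block form forces $(A_2W_2)^2A_2=A_2$ and hence $A_2=0$ by nilpotency of $A_2W_2$. So the discrepancy is a typographical error in the quoted theorem, not a gap in your proof; be aware that it propagates into the term $Q_4A_2R_3$ appearing in the paper's later canonical-form theorem based on this decomposition, which should likewise vanish.
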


The following theorem gives another canonical form for the elements of the set $A\{1',2',3',{1^{k}}'\}$ using the representations of $A$ and $W$ given in equation (\ref{2003AMC}) by using the core-nilpotent decompositions of $AW$ and $WA$ and SVD of $WAW$. 
\begin{thm} \label{3.12}
    Let $A \in \mathbb{C}^{m \times n}, \ W \in \mathbb{C}^{n \times m}$ such that $k= \text{ind}(A,W)$ and $r(WAW)=r(A)=r$. Let $A, W$ have the form as given in equation $(\ref{2003AMC})$ and let SVD of $WAW$ be as follows,
    \begin{equation} \label{svd waw}
    WAW=P\begin{bmatrix}
        B_1 & 0 \\
        0 & 0
    \end{bmatrix}Q^{-1},
    \end{equation} where $P,Q$ are unitary and $B_1$ is a diagonal matrix with positive entries. Then

\begin{equation}
A^{\text{\textcircled{$-$}},W}=Q\begin{bmatrix}
       {B_1}^{-1} & 0\\
    Q_3(W_1A_1W_1)^{-1}R_1+Q_4A_2R_3&  0
    \end{bmatrix}P^{-1}.
\end{equation}
    Furthermore, any element $X \in A\{1',2',3',{1^{k}}'\}$ is given by
    \begin{equation} \label{15}
        X=A^{\text{\textcircled{$-$}},W}+Q\begin{bmatrix}
            0 &0\\
            E & 0
        \end{bmatrix}P^{-1},
    \end{equation}
    such that $E=U_3(B_1^{-1}-Q_1(W_1A_1W_1)^{-1}R_1)-U_4Q_3(W_1A_1W_1)^{-1}R_1$,  $Q^{-1}S=\begin{bmatrix}
        Q_1 & Q_2\\
        Q_3 &Q_4
    \end{bmatrix}$, $R^{-1}P=\begin{bmatrix}
        R_1 & R_2\\
        R_3 &R_4
    \end{bmatrix}$ and $Q^{-1}UQ=\begin{bmatrix}
        U_1 & U_2\\
        U_3 &U_4
    \end{bmatrix}$,
    where $U \in \mathbb{C}^{m \times m}$ and $Q_1,R_1,U_1 \in \mathbb{C}^{r \times r}$.
\end{thm}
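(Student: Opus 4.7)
The plan is to mimic the proof of Theorem~\ref{thm 3.10} but swap the SVD-based form of $A^{D,W}$ for the core--nilpotent form $(\ref{2003AMC})$, while continuing to use the SVD $(\ref{svd waw})$ for $WAW$. The two engine formulas remain the particular representation $A^{\text{\textcircled{$-$}},W} = (WAW)^{\dagger} + (I_m - (WAW)^{\dagger}WAW)\,A^{D,W}\,WAW\,(WAW)^{\dagger}$ from Remark~\ref{remark 13} and the parametrization $X = A^{\text{\textcircled{$-$}},W} + (I_m - (WAW)^{\dagger}WAW)\,U\,((WAW)^{\dagger} - (WAW)^{\dagger}WAW\,A^{D,W}\,WAW\,(WAW)^{\dagger})$ from Theorem~\ref{full set}. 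The new bookkeeping ingredients are the block partitions of $Q^{-1}S$, $R^{-1}P$, and $Q^{-1}UQ$ that connect the SVD basis $(P,Q)$ to the core--nilpotent basis $(R,S)$ and to the free parameter $U$.

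Concretely, I would first read off from $(\ref{svd waw})$ and the unitarity of $P, Q$ the four elementary blocks
\begin{equation*}
(WAW)^{\dagger} = Q\begin{bmatrix} B_1^{-1} & 0 \\ 0 & 0 \end{bmatrix}P^{-1},\qquad I_m - (WAW)^{\dagger}WAW = Q\begin{bmatrix} 0 & 0 \\ 0 & I_{m-r} \end{bmatrix}Q^{-1},
\end{equation*}
together with the one-sided products $(WAW)^{\dagger}WAW = Q\begin{bmatrix} I_r & 0 \\ 0 & 0 \end{bmatrix}Q^{-1}$ and $WAW(WAW)^{\dagger} = P\begin{bmatrix} I_r & 0 \\ 0 & 0 \end{bmatrix}P^{-1}$. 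Next I would compute $A^{D,W}\,WAW\,(WAW)^{\dagger}$ directly from $(\ref{2003AMC})$: the right projector $\begin{bmatrix} I_r & 0 \\ 0 & 0 \end{bmatrix}$ suppresses the second block column of $R^{-1}P$, and what survives is $S\begin{bmatrix}(W_1A_1W_1)^{-1}R_1 & 0 \\ A_2 R_3 & 0\end{bmatrix}P^{-1}$.

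Multiplying this on the left by $I_m - (WAW)^{\dagger}WAW$ and using $Q^{-1}S = [Q_{ij}]$, the projector $\begin{bmatrix} 0 & 0 \\ 0 & I_{m-r} \end{bmatrix}$ annihilates the top block row of $Q^{-1}S$, so only the row $[Q_3\ Q_4]$ acts on $\begin{bmatrix}(W_1A_1W_1)^{-1}R_1 & 0 \\ A_2 R_3 & 0\end{bmatrix}$, producing the single nonzero entry $Q_3(W_1A_1W_1)^{-1}R_1 + Q_4 A_2 R_3$ in the bottom-left position; adding $(WAW)^{\dagger}$ yields the asserted canonical form of $A^{\text{\textcircled{$-$}},W}$. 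For the general element $(\ref{15})$, a symmetric block computation shows that $(WAW)^{\dagger} - (WAW)^{\dagger}WAW\,A^{D,W}\,WAW\,(WAW)^{\dagger}$ is supported only in the top-left block of the $(Q,P)$-partition, whereas $(I_m - (WAW)^{\dagger}WAW)\,U$, written via $Q^{-1}UQ = [U_{ij}]$, has only its bottom block row nonzero; their product therefore has the desired form $Q\begin{bmatrix} 0 & 0 \\ E & 0 \end{bmatrix}P^{-1}$, and the explicit expression for $E$ drops out by matching entries in this final block product.

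The main obstacle is the pure bookkeeping of block-matrix products across three overlapping partitions --- the SVD basis $(P,Q)$, the core--nilpotent basis $(R,S)$, and the parameter basis of $U$ --- and keeping careful track of which blocks vanish after each left- or right-multiplication by the spectral projectors $\begin{bmatrix} I_r & 0 \\ 0 & 0 \end{bmatrix}$ and $\begin{bmatrix} 0 & 0 \\ 0 & I_{m-r} \end{bmatrix}$. No new theoretical input beyond Remark~\ref{remark 13}, Theorem~\ref{full set}, and the decompositions $(\ref{2003AMC})$, $(\ref{svd waw})$ is required.
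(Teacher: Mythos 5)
Your proposal is correct and follows essentially the same route as the paper: the paper likewise starts from Remark~\ref{remark 13} and Theorem~\ref{full set}, reads off $(WAW)^{\dagger}$ and the two projectors from the SVD \eqref{svd waw}, inserts the core--nilpotent form of $A^{D,W}$ from \eqref{2003AMC}, and carries out exactly the block bookkeeping you describe with the partitions of $Q^{-1}S$, $R^{-1}P$ and the partitioned $U$. The only step demanding extra care is the final product giving $E$, where the surviving terms must be matched against the top-left support of $(WAW)^{\dagger}-(WAW)^{\dagger}WAW\,A^{D,W}\,WAW(WAW)^{\dagger}$ exactly as you indicate.
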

\begin{proof}
    For $A \in \mathbb{C}^{m \times n}, \ W \in \mathbb{C}^{n \times m}$, using the representation $WAW$ as in equation \eqref{svd waw}. we get
    \begin{equation} \label{waw dagger}(WAW)^{\dagger}=Q\begin{bmatrix}
            B_1^{-1} & 0\\
            0 & 0
        \end{bmatrix}P^{-1},
    \end{equation}
Thus, 
\begin{equation} \label{waw(waw)dagger}
 (WAW)^{\dagger}(WAW)=Q\begin{bmatrix}
        I_{r} & 0 \\
        0 & 0
\end{bmatrix}Q^{-1} \ \text{and} \  WAW(WAW)^{\dagger}=P\begin{bmatrix}
        I_{r} & 0 \\
        0 & 0
\end{bmatrix}P^{-1}.
\end{equation}
    Further, let the core-nilpotent decompositions of $WA$ and $AW$ respectively are $WA=R\begin{bmatrix}
        C_2 & 0 \\
        0 & N_2
    \end{bmatrix}R^{-1}$ and $AW=S\begin{bmatrix}
        C & 0 \\
        0 & N
    \end{bmatrix}S^{-1}$, 
    where $N_2, \ N$ are nilpotent matrices of order less than equal to $k$, where $k=\text{ind}(A,W)$. So, by \cite{wei2003integral} $A$, $W$ and $A^{D,W}$ can be written as follows
\begin{equation}  \label{ADW 19}
\begin{split}
    A=S\begin{bmatrix}
        A_1 &0 \\
        0 & A_2
    \end{bmatrix}R^{-1}, \ W =R\begin{bmatrix}
        W_1 & 0\\
        0 & W_2
    \end{bmatrix}S^{-1} \ \text{and}, \\ A^{D,W}=S\begin{bmatrix}
        (W_1A_1W_1)^{-1} &0 \\
        0 & A_2
    \end{bmatrix}R^{-1},
\end{split}
\end{equation}
where $R, \ S \ ,A_1, \ W_1$ are nonsingular matrices such that $r(A_1)=r(W_1)=r(C_2)=r(C)$. Now,
\begin{equation}
\begin{split}
    (I_m-&(WAW)^{\dagger}(WAW))A^{D,W} WAW(WAW)^{\dagger}  = \\ & Q\begin{bmatrix}
        0 &0 \\
        0 & I_{m-r}
    \end{bmatrix}Q^{-1} S\begin{bmatrix}
        (W_1A_1W_1)^{-1} &0 \\
        0 & A_2
    \end{bmatrix}R^{-1} P\begin{bmatrix}
        I_{r} & 0 \\
        0 & 0 
    \end{bmatrix}P^{-1}.
    \end{split}
\end{equation}
Now, write $Q^{-1} S$ and $R^{-1} P$ in a partitioned form,
\begin{equation} \label{QS RP}
    Q^{-1} S=\begin{bmatrix}
        Q_1 & Q_2 \\
        Q_3 & Q_4
    \end{bmatrix} \ \text{and} \ R^{-1} P= \begin{bmatrix}
        R_1 & R_2 \\
        R_3 & R_4
    \end{bmatrix},
\end{equation}
where $Q_1, R_1 \in \mathbb{C}^{r \times r}$. Thus, 
\begin{equation*}
    \begin{split}
        A^{\text{\textcircled{$-$}},W} &=(WAW)^{\dagger} + (I_m-(WAW)^{\dagger}(WAW))A^{D,W}WAW(WAW)^{\dagger} \\&=Q \begin{bmatrix}
    {B_1}^{-1} & 0\\
    Q_3(W_1A_1W_1)^{-1}R_1+Q_4A_2R_3&  0
\end{bmatrix}P^{-1}.
    \end{split}
\end{equation*} Now, by Theorem $\ref{full set}$, any element $X \in A\{1',2',3',{1^{k}}'\}$ can be written as 
\begin{equation}\label{24c}
\begin{split}
    X= A^{\text{\textcircled{$-$}},W}+(I_m-(WAW)^{\dagger}WAW)U((WAW)^{\dagger} \\-(WAW)^{\dagger}WAWA^{D,W}WAW(WAW)^{\dagger}).
    \end{split}
\end{equation}
Consider the matrix $U$ in the partitioned form as $QUQ^{-1}=\begin{bmatrix}
    U_1 & U_2 \\
    U_3 & U_4
\end{bmatrix}$, where $U_1 \in \mathbb{C}^{r \times r}$. 
Now by substituting the representations of $(WAW)^{\dagger}$, $ (WAW)^{\dagger}WAW$, $ WAW(WAW)^{\dagger}$, $A^{D,W}$ from equations \eqref{waw dagger},  \eqref{waw(waw)dagger},  \eqref{ADW 19} respectively and the partition form of $U$ along with the representations in \eqref{QS RP} into the equation (\ref{24c}), we get the required equation (\ref{15}).
\end{proof} 
\begin{rem}
    Note that in Theorem \ref{thm 3.10} and Theorem \ref{3.12}, if we take $m=n$ and $W=I$, corresponding representations for $\{1,2,3,1^k\}$-inverses can be obtained.
\end{rem}

\section{Characterizations and properties of $\{1',2',3', {1^{k}}' \}$-inverses}\label{sec4}
This section presents various characterizations and properties of the $W$-weighted $\{1,2,3,1^k\}$-inverses. Further, a new representation of $A^{D,W}$ is obtained using the characterization of $\{1',2',3',{1^{k}}'\}$-inverse. The following theorem entails that matrix $A$ cannot be uniquely determined by its $\{1',2',3',{1^{k}}'\}$-inverse.
\begin{thm}
Let $A \in \mathbb{C}^{m \times n}, \ W \in \mathbb{C}^{n \times m}$ such that $k= \text{ind}(A,W)$. If $X$ is a $\{1',2',3',{1^{k}}'\}$ of $A$, then $X$ is also a $\{1',2',3',{1^{k}}'\}$-inverse of $A+(AW)^{k+1}(XW)^k(I_n-WXWA)$.
\end{thm}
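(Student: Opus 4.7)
The plan is to verify the four defining relations $(1')$, $(2')$, $(3')$, and $({1^{k}}')$ of a $\{1',2',3',{1^{k}}'\}$-inverse for $X$ with respect to $B := A + Z$, where $Z := (AW)^{k+1}(XW)^{k}(I_{n}-WXWA)$. All the algebra lives in $\mathbb{C}^{m\times m}$, so I abbreviate $a := AW$, $y := XW$, $p := AWXW = ay$, $q := XWAW = ya$, and $e := ZW$. From the hypotheses on $X$ I would first record the basic identities $aya = a$ and $yay = y$ (so $p$ and $q$ are idempotent) and $ya^{k+1}=a^{k}$ from $({1^{k}}')$; iterating the last gives $ya^{j}=a^{j-1}$ for $j\geq k+1$, and hence $qa^{j}=ya^{j+1}=a^{j}$ for $j\geq k$. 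The same hypotheses also yield $aq = a$, $qy = y$, and $qX = X$. After translating $(I_{n}-WXWA)W = W(I_{m}-q)$, the matrix $e$ takes the compact form
\[
e \;=\; a^{k+1}\,y^{k}\,(I_{m}-q),
\]
from which I read off the two crucial absorption identities $ey = 0$ and $eX = 0$, together with $pa^{k+1}=a^{k+1}$ (a direct consequence of $aya = a$), which gives $pe = e$, i.e.\ $AWXW\cdot Z = Z$.

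The relations $(1')$, $(2')$, $(3')$ then follow in a line each. Because $ey = 0$ and $eX = 0$, I get $BWXW = AWXW$ and $WBWX = WAWX$; Hermiticity of $WBWX$ is then $(3')$ for $X$ with respect to $A$. Combining $BWXW = AWXW$ with $AWXWZ = Z$ yields $BWXWB = AWXWA + AWXWZ = A + Z = B$, which is $(1')$; and $XWBWX = XWAWX + XW\,ZW\,X = X + y(eX) = X$, which is $(2')$.

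The substantive content is the weighted index equation $({1^{k}}')$, namely $y(a+e)^{k+1} = (a+e)^{k}$. Splitting off the leftmost factor,
\[
y(a+e)^{k+1} \;=\; ya(a+e)^{k} + ye(a+e)^{k} \;=\; q(a+e)^{k} + ye(a+e)^{k},
\]
and noting $ye = a^{k}y^{k}(I_{m}-q)$, the problem collapses to the single core identity
\[
(I_{m}-q)(a+e)^{k} \;=\; 0,
\]
which simultaneously gives $q(a+e)^{k} = (a+e)^{k}$ and $ye(a+e)^{k} = 0$.

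To prove this identity I would expand $(a+e)^{k}$ non-commutatively into length-$k$ words in the alphabet $\{a,e\}$. The pure word $a^{k}$ is killed because $qa^{k}=ya^{k+1}=a^{k}$. Every other word starts with a prefix $a^{i_{0}}e$ for some $i_{0}\geq 0$; substituting the factorization of $e$, this prefix becomes $a^{i_{0}+k+1}\,y^{k}\,(I_{m}-q)$, and since $i_{0}+k+1\geq k$, the identity $(I_{m}-q)a^{i_{0}+k+1}=0$ annihilates the whole word. Summing over all words gives $(I_{m}-q)(a+e)^{k}=0$. The main obstacle is precisely this combinatorial observation: that every appearance of $e$ in the non-commutative expansion carries an $a^{k+1}$ on its left, which is then absorbed by $I_{m}-q$. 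Once this is spotted, the remainder of the argument is elementary bookkeeping.
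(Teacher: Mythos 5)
Your proof is correct, and for the decisive step $({1^{k}}')$ it is organized differently from the paper's. The paper first shows $BWX=AWX$ (your $eX=0$) to dispose of $(1')$--$(3')$ exactly as you do, but then proves $({1^{k}}')$ by establishing, via induction on $p$, the closed form
\[
(BW)^{p}=(AW)^{p}+\sum_{j=0}^{p-1}(AW)^{k+p-j}(XW)^{k}(I_{n}-WXWA)W(AW)^{j},
\]
observing that the $j=k$ term vanishes at $p=k+1$ because $(I_{n}-WXWA)W(AW)^{j}=0$ for $j\geq k$, and then left-multiplying by $XW$. Your route replaces the explicit formula by the single projection identity $(I_{m}-XWAW)(BW)^{k}=0$, proved by annihilating each word of the non-commutative expansion of $(AW+ZW)^{k}$. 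The two arguments rest on the identical pair of facts --- $(I_{m}-XWAW)(AW)^{j}=0$ for $j\geq k$, and every copy of $Z$ carries $(AW)^{k+1}$ on its left --- and indeed the vanishing of the multi-$Z$ cross terms in the paper's induction is exactly your word-annihilation observation. What the paper's version buys is the explicit formula for all powers $(BW)^{p}$; what yours buys is the absence of any induction and a reusable standalone fact ($XWAW$ is a left identity on $(BW)^{k}$), which is arguably cleaner. One caveat: as printed, $Z=(AW)^{k+1}(XW)^{k}(I_{n}-WXWA)$ does not typecheck for $m\neq n$ (an $m\times m$ matrix multiplies an $n\times n$ one), so your identification $e=ZW=(AW)^{k+1}(XW)^{k}(I_{m}-XWAW)$ is not a literal computation but a (sensible) repair; it agrees with the only dimensionally consistent reading, e.g.\ $Z=(AW)^{k}A(WX)^{k}(I_{n}-WXWA)$, and reduces to the paper's formula when $m=n$, $W=I$. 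You should state that normalization explicitly rather than passing it off as a translation.
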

\begin{proof}
    Let $B=A+(AW)^{k+1}(XW)^k(I_n-WXWA)$, then $BWX= AWX+(AW)^{k+1}(XW)^k(I_n-WXWA)WX=AWX$. This gives,
    \begin{equation*}
        \begin{split}
            BWXWB&=AWXWB=AWXW(A+(AW)^{k+1}(XW)^k(I_n-WXWA)) \\&=B.
        \end{split}
    \end{equation*}
    Clearly, $XWBWX=XWAWX=X$ and $(WBWX)^* = (WAWX)^* = WAWX = WBWX$. Hence, $X \in B\{1',2',3'\}$.
    By using induction on $p\in \mathbb{N}$, we obtain
    \begin{equation*}
        (BW)^p=(AW)^p+\sum_{j=0}^{p-1}(AW)^{k+p-j}(XW)^{k}(I_n-WXWA)W(AW)^j.
    \end{equation*}
   For $p=k+1$, 
    \begin{equation*}
        (BW)^{k+1}=(AW)^{k+1}+\sum_{j=0}^{k-1}(AW)^{2k+1-j}(XW)^{k}(I_n-WXWA)W(AW)^j,
    \end{equation*}
    and hence, we get $XW(BW)^{k+1}=(BW)^k$. Thus $X\in B\{1', 2', 3', {1^{k}}'\}$.
\end{proof}
In \cite{wu20221}, it was demonstrated that the two matrices $A$ and $B$ are equal if they have a common $\{1,2,3\}$ and $\{1,2,4\}$-inverse. Next, an example is provided to show that it is not true for the $\{1',2',3'\}$ and $\{1',2',4'\}$-inverses.
\begin{ex} \label{ex A not equal B}
    Take $A=\begin{bmatrix}
        0 & 1
    \end{bmatrix}, \ B=\begin{bmatrix}
        b & 1
    \end{bmatrix} , \
    W=\begin{bmatrix}
        0 \\
        1
    \end{bmatrix}$, where $b \in \mathbb{C}$. Clearly, $X=\begin{bmatrix}
        0 & 1
    \end{bmatrix} \in A\{ 1',2',3'\} \cap B\{ 1',2',3'\}$ and $X \in A\{ 1',2',4'\} \cap B\{ 1',2',4'\}$ for all $b \in \mathbb{C}$, but $A \neq B$ for $b \neq 0$.
    \end{ex}
\begin{lem} \label{lemma 28}
Let $A,B \in \mathbb{C}^{m \times n}, \ W \in \mathbb{C}^{n \times m}$. If $A\{1',2',3'\} \cap B\{1',2',3'\}\neq \emptyset$ and $A\{1',2',4'\} \cap B\{1',2',4'\}\neq \emptyset$, then $A$ and $B$ belong to same equivalence class of the equivalence relation $\sim$, where $A \sim B$ if and only if $WAW=WBW$ for $A,B \in \mathbb{C}^{m \times n}$.
\end{lem}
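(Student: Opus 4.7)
The plan is to reduce this lemma to the classical fact, quoted in the paragraph immediately preceding Example~\ref{ex A not equal B} from \cite{wu20221}, that two complex matrices sharing a common $\{1,2,3\}$-inverse and a common $\{1,2,4\}$-inverse must coincide. We would then apply this fact to the pair $WAW$ and $WBW$ rather than to $A$ and $B$ directly.

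First, pick any $X\in A\{1',2',3'\}\cap B\{1',2',3'\}$. Because each of these sets is nonempty, Theorem~\ref{1,2,3 non empty} forces $r(WAW)=r(A)$ and $r(WBW)=r(B)$, so Corollary~\ref{A(1,2,3)=waw(1,2,3)} applies and gives $X\in (WAW)\{1,2,3\}\cap (WBW)\{1,2,3\}$. Similarly, pick any $Y\in A\{1',2',4'\}\cap B\{1',2',4'\}$; the dual of Corollary~\ref{A(1,2,3)=waw(1,2,3)} (obtained by the same one-line computation of multiplying $AWYWA=A$ on the left and on the right by $W$ to get $(WAW)Y(WAW)=WAW$, together with the direct reading of $(2')$ and $(4')$ as $(2)$ and $(4)$ for $WAW$) yields $Y\in (WAW)\{1,2,4\}\cap (WBW)\{1,2,4\}$.

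Thus the matrices $WAW$ and $WBW$ share both a common $\{1,2,3\}$-inverse (namely $X$) and a common $\{1,2,4\}$-inverse (namely $Y$). Invoking the Wu--Chen result quoted above, we conclude $WAW=WBW$, which by definition means $A\sim B$.

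There is no substantive obstacle here: the only thing that is not already packaged as a labelled result in the paper is the $\{1',2',4'\}$ analogue of Corollary~\ref{A(1,2,3)=waw(1,2,3)}, and this is a routine duplication of its proof. The heart of the argument is the observation that the $W$-weighted setting collapses onto the ordinary-inverse setting for the matrix $WAW$ once the rank condition $r(WAW)=r(A)$ has been secured, which is precisely the content of Theorem~\ref{1,2,3 non empty} and Corollary~\ref{A(1,2,3)=waw(1,2,3)}.
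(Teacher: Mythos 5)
Your argument is correct, and it takes a genuinely different route from the paper's. The paper proves the lemma by a direct chain of identities in the weighted setting: from $X\in A\{1',2',3'\}\cap B\{1',2',3'\}$ it derives $WA=WBWXWA$, from $Y\in A\{1',2',4'\}\cap B\{1',2',4'\}$ it derives $AW=AWYWBW$, and then combines these two identities to collapse $WAW$ onto $WBW$. You instead observe that, once Theorem~\ref{1,2,3 non empty} and Corollary~\ref{A(1,2,3)=waw(1,2,3)} (plus its routine $\{1,2,4\}$ dual) convert the hypotheses into the statement that $WAW$ and $WBW$ share a common $\{1,2,3\}$-inverse and a common $\{1,2,4\}$-inverse, the conclusion follows from the classical unweighted uniqueness fact. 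Your reduction is cleaner and makes transparent why the conclusion is about $WAW$ and $WBW$ rather than $A$ and $B$; the paper's computation is self-contained and, unrolled, is essentially a proof of that classical fact in the weighted notation. One small point of rigor: the Wu--Chen statement you quote is formulated for square matrices, whereas $WAW,WBW\in\mathbb{C}^{n\times m}$ may be rectangular; the rectangular version is standard (for instance, $(WAW)X$ and $(WBW)X$ are Hermitian idempotents with common null space $\mathcal{N}(X)$, hence equal, and similarly $Y(WAW)=Y(WBW)$, from which $WAW=WBW$ follows), but you should either cite a rectangular source such as \cite{ben2003generalized} or include that two-line argument rather than lean on \cite{wu20221} alone. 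Also note that the forward implication you need from Corollary~\ref{A(1,2,3)=waw(1,2,3)} (from primed to unprimed inverses) holds without the rank condition, simply by multiplying $(1')$ by $W$ on both sides, so your invocation of Theorem~\ref{1,2,3 non empty} is harmless but not strictly necessary.
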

\begin{proof}
    Suppose that $X \in A\{1',2',3'\} \cap B\{1',2',3'\}$, therefore
    \begin{equation*}
        \begin{split}
            WA&=WAWXWA =(WAWX)^*WA=X^*(WAW)^*WA\\&=(XWBWX)^*(WAW)^*WA=(WBWX)^*X^*(WAW)^*WA\\&=WBWX(X^*(WAW)^*WA)=WBWX(WAWXWA) =WBWXWA.
        \end{split}
    \end{equation*}
    
    Let $Y \in A\{1',2',4'\} \cap B\{1',2',4'\}$, then
    \begin{equation*}
        \begin{split}
            AW&=AWYWAW=AW(YWAW)^*=AW(WAW)^*Y^*\\&=AW(WAW)^*(YWBWY)^*=AW(WAW)^*Y^*(YWBW)^* \\& =AWYWAWYWBW=AWYWBW.
        \end{split}
    \end{equation*}
    Hence, it follows
    \begin{equation*}
        \begin{split}
            WAW&=WAWYWBW=WBWXWAWYWBW\\&=WBWXWBWXWAWYWBWXWBW\\&=WBWXWAWXWBW=WBWXWBW=WBW.
        \end{split}
    \end{equation*}
\end{proof}
In Example $\ref{ex A not equal B}$, it can be seen that though $A\neq B$, for the given $W$, $WAW=WBW$ holds.
The following result gives sufficient conditions for a matrix $A$ to be uniquely determined by its $\{1',2',3',{1^{k}}'\}$-inverse upto the equivalence relation $\sim$, i.e., multiplication by $W$ on both sides.
\begin{thm}
 \label{waw=wbw} $\displaystyle$
Let $A,B \in \mathbb{C}^{m \times n}, \ W \in \mathbb{C}^{n \times m}$. If $A$ and $B$ have an identical $\{1',2',3',{1^{k}}'\}$-inverse and an identical $\{1',2',4',{}^{l'}1\}$-inverse, then $A \sim B$ i.e. $WAW=WBW$.
\end{thm}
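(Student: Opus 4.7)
The plan is to reduce this statement almost immediately to Lemma \ref{lemma 28}. The key observation is that the conditions $(1'), (2'), (3')$ are already part of the definition of a $\{1',2',3',{1^k}'\}$-inverse, and analogously $(1'), (2'), (4')$ are part of the definition of a $\{1',2',4',{}^{l}1'\}$-inverse. Hence every $\{1',2',3',{1^k}'\}$-inverse of a matrix is automatically a $\{1',2',3'\}$-inverse of that matrix, and every $\{1',2',4',{}^{l}1'\}$-inverse is a $\{1',2',4'\}$-inverse. This gives the set inclusions $A\{1',2',3',{1^k}'\} \subseteq A\{1',2',3'\}$ and $A\{1',2',4',{}^{l}1'\} \subseteq A\{1',2',4'\}$ (and the same for $B$).

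The steps I would carry out are as follows. First, let $X$ denote the common $\{1',2',3',{1^k}'\}$-inverse of $A$ and $B$, so $X \in A\{1',2',3',{1^k}'\} \cap B\{1',2',3',{1^k}'\}$. By the set inclusion just observed, $X \in A\{1',2',3'\} \cap B\{1',2',3'\}$, which in particular shows this intersection is non-empty. Second, let $Y$ denote the common $\{1',2',4',{}^{l}1'\}$-inverse, so that analogously $Y \in A\{1',2',4'\} \cap B\{1',2',4'\}$, making this intersection non-empty as well. Third, the hypotheses of Lemma \ref{lemma 28} are now satisfied, and applying it yields $WAW = WBW$, i.e., $A \sim B$.

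There is essentially no obstacle: the real work was done in Lemma \ref{lemma 28} via the chains of equalities using the $(3')$-symmetry of $WAWX$ and the $(4')$-symmetry of $YWAW$. The present theorem is a clean corollary that records the fact that the extra conditions $({1^k}')$ and $({}^{l}1')$ do not weaken the hypothesis at all for the purpose of concluding $WAW = WBW$. I would also remark, immediately after the proof, that Example \ref{ex A not equal B} remains relevant: it shows that even under these stronger assumptions one cannot hope to conclude $A = B$, only $A \sim B$, so the theorem is sharp in that sense.
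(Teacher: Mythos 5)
Your proposal is correct and follows exactly the paper's route: the paper also proves this theorem by observing that the hypotheses reduce to the non-emptiness conditions of Lemma \ref{lemma 28} and then invoking that lemma. You have merely spelled out the set inclusions that the paper leaves implicit.
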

\begin{proof}
    The proof follows from Lemma \ref{lemma 28}.
\end{proof}
The following theorem investigates the class of matrices for which the sets $ A\{1',2',3'\}$ and $A\{1',2',3',{1^{k}}'\} $ are the same.
\begin{thm}
 \label{AW inv}
Let $A \in \mathbb{C}^{m \times n}, \ W \in \mathbb{C}^{n \times m}$ such that $n>m$ and $k \geq 0$ be some integer. Then the following statements are equivalent,
\begin{enumerate}
    \item[(i)] $AW$ is invertible or nilpotent.
    \item[(ii)] $ A\{1',2',3'\} =A\{1',2',3',{1^{k}}'\} $.
\end{enumerate}
\end{thm}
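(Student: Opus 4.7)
I would prove $(i)\Leftrightarrow(ii)$ by establishing each implication separately. The containment $A\{1',2',3',{1^{k}}'\}\subseteq A\{1',2',3'\}$ is immediate from the definitions, so $(i)\Rightarrow(ii)$ reduces to showing that every $X\in A\{1',2',3'\}$ also satisfies $({1^{k}}')$. Fix such an $X$. If $AW$ is invertible, postmultiplying $AWXWA=A$ by $W$ gives $(AWXW)(AW)=AW$, and cancelling the invertible $m\times m$ matrix $AW$ on the right yields $XW=(AW)^{-1}$; this forces $XW(AW)^{k+1}=(AW)^{k}$ at once. If instead $AW$ is nilpotent, then $k\geq\text{ind}(A,W)\geq\text{ind}(AW)$ gives $(AW)^{k}=(AW)^{k+1}=0$ and $({1^{k}}')$ degenerates to $0=0$.

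For the converse $(ii)\Rightarrow(i)$ I would parametrize $A\{1',2',3'\}$ via Theorem~\ref{lemma 7}, writing an arbitrary element as
\begin{equation*}
X=(WAW)^{\dagger}+\bigl(I_m-(WAW)^{\dagger}WAW\bigr)\,U\,(WAW)^{\dagger},\qquad U\in\mathbb{C}^{m\times m},
\end{equation*}
so that hypothesis (ii) forces $XW(AW)^{k+1}=(AW)^{k}$ to hold identically in $U$. Taking $U=0$ pins down $(WAW)^{\dagger}W(AW)^{k+1}=(AW)^{k}$, after which subtracting from the general identity yields
\begin{equation*}
\bigl(I_m-(WAW)^{\dagger}WAW\bigr)\,U\,(WAW)^{\dagger}W(AW)^{k+1}=0\quad\text{for every }U\in\mathbb{C}^{m\times m}.
\end{equation*}
Lemma~\ref{AWB=0} now forces either $I_m=(WAW)^{\dagger}WAW$ or $(WAW)^{\dagger}W(AW)^{k+1}=0$. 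In the first branch, $(WAW)^{\dagger}$ is a left inverse of $WAW$, whence $r(WAW)=m$; since $WAW=W(AW)$ and $AW\in\mathbb{C}^{m\times m}$, the chain $m\geq r(AW)\geq r(WAW)=m$ gives $r(AW)=m$, so $AW$ is invertible. In the second branch, combining with the $U=0$ identity gives $(AW)^{k}=0$, so $AW$ is nilpotent.

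The delicate step I anticipate is the first branch of the dichotomy: translating the algebraic identity $(WAW)^{\dagger}WAW=I_m$ into invertibility of the smaller $m\times m$ matrix $AW$ through the rank chain $m\geq r(AW)\geq r(WAW)=m$. Throughout I tacitly assume $r(WAW)=r(A)$ so that the parametrization in Theorem~\ref{lemma 7} is available and both sets in (ii) are genuinely nonempty.
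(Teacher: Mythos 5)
Your proof is correct and follows essentially the same route as the paper's: the same two-case direct verification for $(i)\Rightarrow(ii)$ (invertible $AW$ cancels to give $XWAW=I_m$, nilpotent $AW$ makes $({1^{k}}')$ vacuous), and for $(ii)\Rightarrow(i)$ the same parametrization of $A\{1',2',3'\}$ via Theorem~\ref{lemma 7} followed by the $U=0$ specialization and an appeal to Lemma~\ref{AWB=0}. You additionally spell out the rank chain showing that $I_m=(WAW)^{\dagger}WAW$ forces $AW$ to be invertible, and you correctly flag the tacit hypothesis $r(WAW)=r(A)$ needed for the parametrization to be nonvacuous; both points are left implicit in the paper.
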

\begin{proof}
    $(i) \implies (ii)$. If $AW$ is nilpotent then $ X \in A\{1',2',3'\}$ if and only if $X \in A\{1',2',3',{1^{k}}'\}$, since $(XW)(AW)^{k+1}=(AW)^k$ holds trivially. If $AW$ is invertible, then to verify $(XW)(AW)^{k+1}=(AW)^k$, it is enough to show $(XWAW)=I$. Now, if $X \in A\{1',2',3'\}$ then $AWXWA=A$ implies $AWXWAW=AW$. Further, by pre-multiplying both sides of $AWXWAW=AW$ by $(AW)^{-1}$, we get $XWAW=I$.\\
    $(ii)\implies (i)$. Let $ A\{1',2',3'\} =A\{1',2',3',{1^{k}}'\} $. By Theorem $\ref{lemma 7}$, $X=(WAW)^{\dagger}+ (I_m-(WAW)^{\dagger}WAW)U(WAW)^{\dagger} $ is an element of $A\{1',2',3'\}$ for all $U \in \mathbb{C}^{m \times m}$. It will satisfy equation $({1^{k}}')$ by hypothesis, i.e.,
    \begin{equation} \label{1^k}
        ((WAW)^{\dagger}+ (I_m-(WAW)^{\dagger}WAW)U(WAW)^{\dagger})W(AW)^{k+1}=(AW)^k.
    \end{equation}
    In particular, for $U=0$ in equation $(\ref{1^k})$ gives $(WAW)^{\dagger})W(AW)^{k+1}=(AW)^k$. Now, by substituting $(AW)^k$ for $(WAW)^{\dagger})W(AW)^{k+1}$ into the equation $(\ref{1^k})$, we get
    \begin{equation*}
        (I_m-(WAW)^{\dagger}WAW)U(WAW)^{\dagger})W(AW)^{k+1}=0,
    \end{equation*}
    for all $U \in \mathbb{C}^{m \times m}$.
Thus by Lemma $\ref{AWB=0}$, either $(I_m-(WAW)^{\dagger}WAW)=0$ or $(AW)^k=0$ which gives $AW$ is invertible or nilpotent.
\end{proof}
\begin{rem}
    The Theorem $\ref{AW inv}$ can be easily verified for the matrices $B$ and $W$ as given in Example $\ref{ex A not equal B}$. Clearly $BW$ is invertible, hence if we take $b\neq 0$, then $X\in B\{{1^{k}}'\}$.  Moreover, since $X \notin B\{{{}^{k}1}'\}$, further investigation is needed for the class of matrices for which the sets $ B\{1',2',4'\}$ and $B\{1',2',4',{{}^{k}1}'\} $ are the same.
\end{rem}
In the following theorem, we investigate the class of matrices for which the sets $ A\{1',2',4'\}$ and $A\{1',2',4',{{}^{k}1}'\} $ are the same.
\begin{thm} \label{WA inv}
Let $A \in \mathbb{C}^{m \times n}, \ W \in \mathbb{C}^{n \times m}$ such that $m>n$ and $k \geq 0$ be some integer. Then the following statements are equivalent,
\begin{enumerate}
    \item[(i)] $WA$ is invertible or nilpotent.
    \item[(ii)] $ A\{1',2',4'\} =A\{1',2',4',{{}^{k}1}'\} $.
\end{enumerate}
\end{thm}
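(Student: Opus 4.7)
The plan is to mirror the proof of Theorem \ref{AW inv}, with the equation $({{}^k1}')$ taking the role of $({1^k}')$ and the $\{1',2',4'\}$-inverse representation from equation $(\ref{4''})$ replacing its $\{1',2',3'\}$-counterpart. The overall strategy is: handle the two subcases of (i) directly, and derive (i) from (ii) by plugging the generic representation of elements of $A\{1',2',4'\}$ into $({{}^k1}')$ and invoking Lemma \ref{AWB=0}.

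For $(i)\Rightarrow(ii)$, I would split into two subcases. If $WA$ is nilpotent, then for $k$ at least the index of $WA$ both $(AW)^{k+2}=A(WA)^{k+1}W$ and $(AW)^kA=A(WA)^k$ are zero, so $({{}^k1}')$ holds trivially for every $X\in A\{1',2',4'\}$. If $WA$ is invertible, I would start from $AWXWA=A$, pre-multiply both sides by $W$ to obtain $WAWXWA=WA$, and post-multiply by $(WA)^{-1}$ to conclude $WAWX=I_n$. Equation $({{}^k1}')$ then follows from
\[
(AW)^{k+2}X=A(WA)^{k+1}WX=A(WA)^k(WAWX)=A(WA)^k=(AW)^kA.
\]

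For $(ii)\Rightarrow(i)$, I would use equation $(\ref{4''})$: every $X=(WAW)^\dagger+(WAW)^\dagger V(I_n-WAW(WAW)^\dagger)$ with $V\in\mathbb{C}^{m\times m}$ belongs to $A\{1',2',4'\}$ and, by hypothesis, must satisfy $({{}^k1}')$. Taking $V=0$ first yields $(AW)^{k+2}(WAW)^\dagger=(AW)^kA$; substituting this back into the equation for arbitrary $V$ leaves
\[
(AW)^{k+2}(WAW)^\dagger V(I_n-WAW(WAW)^\dagger)=0
\]
for all $V\in\mathbb{C}^{m\times m}$. Lemma \ref{AWB=0} then forces one of the outer factors to vanish. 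If $I_n-WAW(WAW)^\dagger=0$, then $WAW$ has full row rank $n$, so by Frobenius' inequality $r(WA)=n$, and since $WA\in\mathbb{C}^{n\times n}$ this means $WA$ is invertible. If instead $(AW)^{k+2}(WAW)^\dagger=0$, combining with the $V=0$ identity gives $(AW)^kA=0$, i.e., $A(WA)^k=0$; multiplying on the left by $W$ yields $(WA)^{k+1}=0$, so $WA$ is nilpotent.

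I do not anticipate any serious obstacle, as the argument is essentially a symmetric dualisation of the proof of Theorem \ref{AW inv}. The two points that require mild care are (a) pre- and post-multiplying on the correct sides in the invertibility subcase so that $(WA)^{-1}$ rather than $(AW)^{-1}$ is the relevant inverse, and (b) keeping track of the identity $(AW)^jA=A(WA)^j$ when moving between the two natural forms that appear in $({{}^k1}')$.
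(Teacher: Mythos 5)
Your proposal is correct and is exactly the argument the paper intends: the paper's own proof of this theorem is the one-line remark that it ``follows similarly as in Theorem \ref{AW inv}'', and your write-up is precisely that dualization, carried out with the correct sides for $(WA)^{-1}$ and the correct use of the representation \eqref{4''} together with Lemma \ref{AWB=0}. The only cosmetic point is that the free parameter in \eqref{4''} should live in $\mathbb{C}^{n\times n}$ (the paper's $V\in\mathbb{C}^{m\times m}$ is a typo you inherited), which does not affect the argument.
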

\begin{proof}
    The proof follows similarly as in Theorem $\ref{AW inv}$.
\end{proof}
The following example verifies Theorem $\ref{WA inv}$ and illustrates the fact that Theorem $\ref{AW inv}$ and Theorem $\ref{WA inv}$ are independent.
\begin{ex} \label{ex last}
    Take $A=\begin{bmatrix}
        b \\
        1
    \end{bmatrix}, \ W=\begin{bmatrix}
        0 & 1
    \end{bmatrix} \  $, where $b \in \mathbb{C}$ and $b\neq 0$. Clearly, $r(WAW)=r(A)$. It is easy to see that $WAW=\begin{bmatrix}
        0 & 1
    \end{bmatrix}$ . So, $X=\begin{bmatrix}
        0 \\
        1
    \end{bmatrix} \in A\{ 1',2',3'\} $ and also $X \in A\{ 1',2',4'\} $. Also, $WA=[1]$, which is invertible. Evidently, $X\in A\{{{}^{k}1}'\}$ but $X \notin A\{{1^{k}}'\}$.
    \end{ex}

Subsequently, the following results focus on several characterizations of the set $A\{1',2',3', {1^{k}}'\}$.
 
\begin{thm} \label{A2ts (i)} 
Let $A \in \mathbb{C}^{m \times n}, \ W \in \mathbb{C}^{n \times m}$ and $k= \text{ind}(A,W)$. Then 
\begin{itemize} 
    \item[(i)] $X \in A\{1',2',3', {1^{k}}'\}$ if and only if $XWAWX=X$, $r(WAW)=r(A)$, $\mathcal{N}(X)=\mathcal{N}((WA)^*)$ and $\mathcal{R}(X) \supseteq \mathcal{R}((AW)^k)$.
    \item[(ii)]  $X \in A\{1',2',3', {1^{k}}'\}$ if and only if $AWXWA=A$, $r(WAW)=r(A)$, $\mathcal{N}(X)=\mathcal{N}((WA)^*)$ and $\mathcal{R}(X) \supseteq \mathcal{R}((AW)^k)$.
\end{itemize}

\end{thm}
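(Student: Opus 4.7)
The plan is to observe that the two characterizations decouple cleanly: each one asserts membership of $X$ in $A\{1',2',3'\}$ (handled by Theorem \ref{a2ts}) together with the extra requirement coming from $({1^{k}}')$, which I will reformulate as the range inclusion $\mathcal{R}((AW)^k) \subseteq \mathcal{R}(X)$. So the whole argument reduces to a single calculation showing that $({1^{k}}')$ and this range inclusion are equivalent in the presence of $(2')$.

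For part (i), the forward direction is immediate: if $X \in A\{1',2',3',{1^{k}}'\}$ then in particular $X \in A\{1',2',3'\}$, so Theorem \ref{a2ts}(ii) yields $XWAWX = X$, $r(WAW) = r(A)$ and $\mathcal{N}(X) = \mathcal{N}((WA)^*)$; and $({1^{k}}')$ rewrites as $(AW)^k = XW(AW)^{k+1}$, giving $\mathcal{R}((AW)^k) \subseteq \mathcal{R}(X)$. For the converse, Theorem \ref{a2ts} places $X$ in $A\{1',2',3'\}$ and it remains only to recover $({1^{k}}')$. Here I pick $M$ with $(AW)^k = XM$ (available from the range-inclusion hypothesis) and compute
\begin{equation*}
XW(AW)^{k+1} = XWAW \cdot (AW)^k = XWAW \cdot XM = (XWAWX)M = XM = (AW)^k,
\end{equation*}
where the third equality uses $(2')$.

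Part (ii) follows the identical template, but with Theorem \ref{a2ts}(iii) invoked in place of (ii) so as to trade the outer-inverse equation $XWAWX = X$ for the inner-inverse equation $AWXWA = A$; the equivalence of these two (under the given rank hypothesis) rests on Lemma \ref{any 2 give 3}. The $({1^{k}}')$ half of the argument is word-for-word the same as in part (i), since the key computation above only required $(2')$, which is restored by Theorem \ref{a2ts} during the reverse direction.

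I do not foresee a genuine obstacle. The only delicate bookkeeping point is that the stated null-space condition $\mathcal{N}(X) = \mathcal{N}((WA)^*)$ must be identified with $\mathcal{N}(X) = \mathcal{N}((WAW)^*)$ when applying results about $\{1,2,3\}$-inverses of $WAW$ through Corollary \ref{A(1,2,3)=waw(1,2,3)}; this identification is forced by $r(WAW) = r(WA) = r(A)$ via Frobenius (cf.\ Remark \ref{rem2.1}) and is already embedded in Theorem \ref{a2ts}, so it costs nothing extra here.
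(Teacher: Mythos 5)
Your proposal is correct and follows essentially the same route as the paper: both directions reduce to Theorem \ref{a2ts} for the $\{1',2',3'\}$ part, the forward implication reads the range inclusion off $(1^{k}{}')$, and the converse factors $(AW)^k = XM$ and uses $(2')$ to compute $XW(AW)^{k+1} = XWAWXM = XM = (AW)^k$, which is exactly the paper's calculation. Your closing remark about identifying $\mathcal{N}((WA)^*)$ with $\mathcal{N}((WAW)^*)$ via the rank condition is a point the paper leaves implicit inside Theorem \ref{a2ts}, but it adds nothing beyond what is already established there.
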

\begin{proof}
    Let $X \in A\{1',2',3', {1^{k}}'\}$. In view of Theorem $\ref{a2ts}$, to prove the `if' part of $(i)$ and $(ii)$, it is enough to show that $\mathcal{R}(X) \supseteq \mathcal{R}((AW)^k)$. From equation (${1^{k}}'$), $XW(AW)^{k+1}=(AW)^k$ gives $\mathcal{R}(X) \supseteq \mathcal{R}((AW)^k)$. Conversely, if $\mathcal{R}(X) \supseteq \mathcal{R}((AW)^k)$ then there exists some matrix $U \in \mathbb{C}^{n \times m}$ such that $(AW)^k=XU$. Hence $XW(AW)^{k+1}=XWAWXU=XU=(AW)^k.$ Other conditions of the `only if' part for the proof $(i)$ and $(ii)$ are direct from Theorem $\ref{a2ts}$.
\end{proof}
Particularly, if $m=n$ and $W=I$ in Theorem \ref{A2ts (i)}, a characterizations of $\{1,2,3, 1^{k}\}$-inverse of $A$ are given in the following corollaries.
\begin{cor} \cite{wu20221}
    Let $A \in \mathbb{C}^{n \times n}$, and $k= \text{ind}(A)$. Then $X \in A\{1,2,3, 1^{k}\}$ if and only if $XAX=X$, $\mathcal{N}(X)=\mathcal{N}((A)^*)$ and $\mathcal{R}(X) \supseteq \mathcal{R}((A)^k)$.
\end{cor}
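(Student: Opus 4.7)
The plan is to invoke Theorem \ref{A2ts (i)}(i) directly with the specialization $m=n$ and $W=I_n$, and then verify that all four conditions in that theorem collapse to the three stated in the corollary.

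First, I would substitute $W=I_n$ into every occurrence of $W$ in Theorem \ref{A2ts (i)}(i). The product $WAW$ simply becomes $A$, so the rank condition $r(WAW)=r(A)$ becomes the tautology $r(A)=r(A)$ and can be dropped from the list. Similarly $WA=A$ and $AW=A$, so $\mathcal{N}((WA)^*)=\mathcal{N}(A^*)$ and $\mathcal{R}((AW)^k)=\mathcal{R}(A^k)$. The outer-inverse equation $XWAWX=X$ reduces to $XAX=X$. The index quantity $\text{ind}(A,W)=\max\{\text{ind}(AW),\text{ind}(WA)\}$ collapses to $\text{ind}(A)$, which matches the hypothesis of the corollary.

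Second, I would observe that after these substitutions the four conditions of Theorem \ref{A2ts (i)}(i) become exactly $XAX=X$, $\mathcal{N}(X)=\mathcal{N}(A^*)$, and $\mathcal{R}(X)\supseteq\mathcal{R}(A^k)$, so the set $A\{1',2',3',{1^k}'\}$ under this specialization coincides with $A\{1,2,3,1^k\}$ in the unweighted sense of \cite{wu20221}. Hence the equivalence in the corollary follows immediately from Theorem \ref{A2ts (i)}(i).

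There is essentially no obstacle here beyond bookkeeping; the only thing worth pointing out explicitly is that the redundant rank condition drops out because $WAW=A$ trivially forces the ranks to agree. So the proof would be a single short paragraph noting the substitution $W=I_n$, the collapse of each piece, and the conclusion by direct appeal to Theorem \ref{A2ts (i)}(i).
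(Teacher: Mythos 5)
Your proposal is correct and matches the paper exactly: the paper obtains this corollary precisely by specializing Theorem \ref{A2ts (i)}(i) to $m=n$ and $W=I$, under which the rank condition becomes vacuous and the remaining conditions reduce to the three stated. Nothing further is needed.
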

\begin{cor} 
    Let $A \in \mathbb{C}^{n \times n}$, and $k= \text{ind}(A)$. Then $X \in A\{1,2,3, 1^{k}\}$ if and only if $AXA=X$, $\mathcal{N}(X)=\mathcal{N}((A)^*)$ and $\mathcal{R}(X) \supseteq \mathcal{R}((A)^k)$.
\end{cor}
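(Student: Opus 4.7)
The plan is to obtain this corollary as a direct specialization of Theorem \ref{A2ts (i)}(ii) with $m=n$ and $W=I_{n}$. First, I would verify that under this substitution the defining equations of a $\{1',2',3',{1^{k}}'\}$-inverse collapse to the classical Penrose-type conditions for a $\{1,2,3,1^{k}\}$-inverse: $AWXWA=A$ becomes $AXA=A$, $XWAWX=X$ becomes $XAX=X$, $(WAWX)^{*}=WAWX$ becomes $(AX)^{*}=AX$, and $(XW)(AW)^{k+1}=(AW)^{k}$ becomes $XA^{k+1}=A^{k}$. Consequently, $A\{1',2',3',{1^{k}}'\}$ coincides with $A\{1,2,3,1^{k}\}$ when $W=I_{n}$.

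Next, I would record that the auxiliary data in the hypothesis collapse in the expected way: the rank equality $r(WAW)=r(A)$ becomes the tautology $r(A)=r(A)$, the index $\text{ind}(A,W)=\max\{\text{ind}(AW),\text{ind}(WA)\}$ reduces to $\text{ind}(A)$, the null space $\mathcal{N}((WA)^{*})$ becomes $\mathcal{N}(A^{*})$, and the range $\mathcal{R}((AW)^{k})$ becomes $\mathcal{R}(A^{k})$. With these identifications, the equivalence asserted in Theorem \ref{A2ts (i)}(ii) specializes verbatim to the corollary's statement.

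There is no substantive obstacle here. The nontrivial passage between the outer-inverse identity $XAX=X$ and the inner-inverse identity $AXA=A$ is already handled in the proof of Theorem \ref{A2ts (i)} via Lemma \ref{any 2 give 3} (applied after the reduction to $(WAW)\{1,2,3\}$), and the range inclusion $\mathcal{R}(X)\supseteq\mathcal{R}(A^{k})$ is argued there by producing $U$ with $A^{k}=XU$ and then using $XAX=X$ to recover $XA^{k+1}=A^{k}$. Thus the proof reduces to a single sentence invoking Theorem \ref{A2ts (i)}(ii) and observing that every hypothesis and conclusion degenerates correctly under the substitution $W=I_{n}$.
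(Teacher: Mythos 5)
Your proposal is correct and takes essentially the same route as the paper, which likewise obtains this corollary by setting $m=n$ and $W=I$ in Theorem \ref{A2ts (i)} with no further argument needed. One small remark: the corollary as printed reads $AXA=X$, evidently a typo for $AXA=A$, and your specialization correctly produces the latter.
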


\noindent The following theorem gives some properties of $\{1',2',3',{1^{k}}'\}$-inverses, which will be further used to get another characterization of the set $A\{1',2',3',{1^{k}}'\}$.
\begin{thm} \label{proj}
Let $A \in \mathbb{C}^{m \times n}, \ W \in \mathbb{C}^{n \times m}$ and $k= \text{ind}(A,W)$. Then for $X \in A\{1',2',3',{1^{k}}'\}$
\begin{enumerate}
    \item[(i)] $WAWX=P_{\mathcal{R}(WA)}$,
    \item[(ii)] $XWAW=P_{\mathcal{R}(X), \mathcal{N}(AW)}$.
\end{enumerate}
\end{thm}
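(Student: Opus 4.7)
The plan is to verify both equalities by establishing that the relevant products are idempotent projectors with the correct range (and, in the first case, that they are also Hermitian; in the second, that they have the prescribed kernel). All four ingredients should fall out directly from the defining identities $(1'),(2'),(3')$; notably, the equation $({1^{k}}')$ will not be needed, so both conclusions in fact hold for every $X\in A\{1',2',3'\}$.

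For part (i), I would first observe that $(WAWX)^{2}=WAW(XWAWX)=WAWX$ by the outer-inverse identity $(2')$, and that $(WAWX)^{*}=WAWX$ is just $(3')$, so $WAWX$ is an orthogonal projector. To identify its range with $\mathcal{R}(WA)$, I would proceed by two inclusions: $\mathcal{R}(WAWX)\subseteq \mathcal{R}(WAW)\subseteq \mathcal{R}(WA)$ is immediate, while the reverse inclusion uses $(1')$, since $WA=W(AWXWA)=(WAWX)(WA)$ exhibits each column of $WA$ as lying in $\mathcal{R}(WAWX)$. Hence $WAWX$ is the orthogonal projection onto $\mathcal{R}(WA)$.

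For part (ii), the plan is to show that $XWAW$ is idempotent with image $\mathcal{R}(X)$ and kernel $\mathcal{N}(AW)$. Idempotence comes from $(XWAW)(XWAW)=XW(AWXWA)W=XWAW$ by $(1')$. The range identity follows from the trivial inclusion $\mathcal{R}(XWAW)\subseteq\mathcal{R}(X)$ together with the opposite inclusion obtained from $(2')$: $X=XWAWX$ shows $\mathcal{R}(X)\subseteq\mathcal{R}(XWAW)$. For the kernel, $\mathcal{N}(AW)\subseteq\mathcal{N}(XWAW)$ is obvious, and for the reverse direction I would multiply $XWAWy=0$ on the left by $AW$ and invoke $(1')$ in the form $AWXWAW=AW$ to conclude $AWy=0$. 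Combining idempotence with these two subspace identifications gives $XWAW=P_{\mathcal{R}(X),\mathcal{N}(AW)}$.

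There is no serious obstacle here; the only care required is bookkeeping — tracking which of the four defining identities licenses each inclusion (in particular, that the chain $AWXWA=A\Rightarrow AWXWAW=AW$ is what supplies the nontrivial kernel inclusion in (ii), rather than anything about indices or Drazin-type identities). The proof should therefore be short and essentially computational.
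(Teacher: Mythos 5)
Your argument is correct, and it follows the same overall skeleton as the paper's proof (show the product is an idempotent projector, then identify its range and null space), but the subspace identification is carried out by a genuinely different and more self-contained route. The paper treats $X$ as an ordinary $\{1,2,3\}$-inverse of $WAW$ (via its Corollary \ref{A(1,2,3)=waw(1,2,3)}), reads off $WAWX=P_{\mathcal{R}(WAW)}$ and $XWAW=P_{\mathcal{R}(X),\mathcal{N}(WAW)}$ as standard facts, and then upgrades $\mathcal{R}(WAW)$ to $\mathcal{R}(WA)$ and $\mathcal{N}(WAW)$ to $\mathcal{N}(AW)$ by a dimension count resting on the rank equality $r(WAW)=r(WA)=r(AW)=r(A)$, which itself comes from the existence criterion and the Frobenius inequality. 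You instead extract the nontrivial inclusions $\mathcal{R}(WA)\subseteq\mathcal{R}(WAWX)$ (from $WA=(WAWX)(WA)$) and $\mathcal{N}(XWAW)\subseteq\mathcal{N}(AW)$ (from $AWXWAW=AW$) directly out of equation $(1')$, with no rank argument at all. What your version buys is transparency about exactly which defining identity licenses each step and independence from the earlier rank machinery; what the paper's version buys is brevity, since it leans on already-established facts about $(WAW)\{1,2,3\}$. Both proofs correctly observe (implicitly in the paper, explicitly in yours) that $({1^{k}}')$ plays no role, so the conclusions hold for all of $A\{1',2',3'\}$.
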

\begin{proof}
    \begin{itemize}
        \item[(i)] For $X {\in} A\{1',2',3',{1^{k}}'\} $, clearly $WAWX{=}P_{\mathcal{R}(WAW)}$. Now, since $r(WAW){=}r(WA)$ and $\mathcal{R}(WAW) {\subseteq} \mathcal{R}(WA)$ therefore,  $\mathcal{R}(WAW) {=} \mathcal{R}(WA)$. 
        \item[(ii)] For $X {\in} A\{1',2',3',{1^{k}}'\} $, clearly $XWAW{=}P_{\mathcal{R}(X), \mathcal{N}(WAW)}$. By $r(WAW)=r(AW)$ and $\mathcal{N}(WAW) {\supseteq} \mathcal{N}(AW)$, we get $\mathcal{N}(WAW)=\mathcal{N}(AW)$. 
    \end{itemize}
\end{proof}
The following result is obtained when choosing $W=I$ and $m=n$ in Theorem \ref{proj}.
\begin{cor}
    Let $A \in \mathbb{C}^{n \times n}$, such that $k= \text{ind}(A)$. Then for $X \in A\{1,2,3,1^{k}\}$
\begin{enumerate}
    \item[(i)] $AX=P_{\mathcal{R}(A)}$
    \item[(ii)] $XA=P_{\mathcal{R}(X), \mathcal{N}(A)}$.
\end{enumerate}
\end{cor}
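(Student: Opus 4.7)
The corollary is the $W = I_n$, $m = n$ specialization of Theorem \ref{proj}, so the plan is simply to verify that this substitution identifies both the hypothesis and the conclusion of the corollary with those of the parent theorem, after which the result is immediate.

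First, I would check that under $W = I_n$ every defining equation of a $\{1',2',3',{1^{k}}'\}$-inverse collapses to its classical counterpart: $AWXWA = A$ becomes $AXA = A$, $XWAWX = X$ becomes $XAX = X$, $(WAWX)^* = WAWX$ becomes $(AX)^* = AX$, and $XW(AW)^{k+1} = (AW)^k$ becomes $XA^{k+1} = A^k$. Thus $X \in A\{1,2,3,1^{k}\}$ exactly when $X \in A\{1',2',3',{1^{k}}'\}$ in this unweighted setting. The index condition $k = \text{ind}(A, W) = \max\{\text{ind}(AW), \text{ind}(WA)\}$ reduces to $k = \text{ind}(A)$, and the rank hypothesis $r(WAW) = r(A)$ implicit in Theorem \ref{proj} becomes the tautology $r(A) = r(A)$.

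With these identifications in place, substituting $W = I_n$ into the two conclusions of Theorem \ref{proj} yields (i) $AX = WAWX = P_{\mathcal{R}(WA)} = P_{\mathcal{R}(A)}$ and (ii) $XA = XWAW = P_{\mathcal{R}(X), \mathcal{N}(AW)} = P_{\mathcal{R}(X), \mathcal{N}(A)}$, which is precisely the statement of the corollary. There is no genuine obstacle here --- the only point to watch is the bookkeeping check that each weighted object (equations, index, rank, range, null space) reduces correctly to its unweighted analogue under $W = I_n$, and each of those reductions is immediate.
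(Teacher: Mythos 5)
Your proposal is correct and matches the paper exactly: the paper obtains this corollary precisely by setting $W=I$ and $m=n$ in Theorem \ref{proj}, and your verification that each weighted equation, the index condition, the rank hypothesis, and the range/null-space identifications all collapse to their classical counterparts is the whole content of that specialization.
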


The following theorem characterizes $\{1',2',3',{1^{k}}'\}$-inverses from a geometrical point of view.
\begin{thm} \label{30}
Let $A \in \mathbb{C}^{m \times n}, \ W \in \mathbb{C}^{n \times m}$ such that $k= \text{ind}(A,W)$ and $r(WAW)=r(A)$. Then $X$ satisfies the following system of equations
\begin{equation*}
    XWAWX=X, \  WAWX=P_{\mathcal{R}(WA)}, \ \mathcal{R}(X) \supseteq \mathcal{R}((AW)^k),
\end{equation*}
if and only if $X \in A\{1',2',3',{1^{k}}'\}$.
\end{thm}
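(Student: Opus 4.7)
The plan is to prove both implications separately, with the forward direction falling out of results already established in the paper and the reverse direction requiring a short additional argument that uses the rank hypothesis $r(WAW)=r(A)$ in an essential way.

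For the forward implication, suppose $X \in A\{1',2',3',{1^{k}}'\}$. Then $XWAWX=X$ is exactly equation $(2')$, the identity $WAWX=P_{\mathcal{R}(WA)}$ is precisely Theorem \ref{proj}(i), and the range inclusion $\mathcal{R}(X) \supseteq \mathcal{R}((AW)^k)$ is part of Theorem \ref{A2ts (i)}(i). Thus this direction is immediate from results already proved.

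For the converse, assume $X$ satisfies the three listed conditions. Condition $(2')$ is given outright. Since $P_{\mathcal{R}(WA)}$ is an orthogonal projection, it is self-adjoint, so $(WAWX)^* = P_{\mathcal{R}(WA)}^* = P_{\mathcal{R}(WA)} = WAWX$, which is $(3')$. To obtain $(1')$, the plan is to right-multiply $WAWX = P_{\mathcal{R}(WA)}$ by $WA$; since the columns of $WA$ lie in $\mathcal{R}(WA)$, this yields $WAWXWA = WA$, i.e., $W(AWXWA - A) = 0$. The columns of $AWXWA - A = A(WXWA - I_n)$ lie in $\mathcal{R}(A)$, and by Remark \ref{rem2.1}, $r(WAW)=r(A)$ is equivalent to $r(WA)=r(A)$, which forces the restriction of $W$ to $\mathcal{R}(A)$ to be injective, i.e., $\mathcal{N}(W)\cap \mathcal{R}(A)=\{0\}$. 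Hence $AWXWA = A$, establishing $(1')$.

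It remains to verify $({1^{k}}')$. The plan is to exploit the inclusion $\mathcal{R}(X) \supseteq \mathcal{R}((AW)^k)$ to write $(AW)^k = XU$ for some $U \in \mathbb{C}^{n\times m}$, and then use $XWAWX = X$ to collapse $XW(AW)^{k+1} = (XWAWX)U = XU = (AW)^k$. The main obstacle, if there is one, is the $(1')$ step; the rank hypothesis is the key ingredient that converts $W(AWXWA - A) = 0$ into $AWXWA = A$, and identifying that $\mathcal{R}(AWXWA - A) \subseteq \mathcal{R}(A)$ is crucial for invoking the injectivity of $W$ on $\mathcal{R}(A)$.
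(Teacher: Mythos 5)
Your proof is correct, and the forward direction is exactly the paper's (Theorem \ref{proj}(i) for the projector identity, equation $(2')$ and the range inclusion from Theorem \ref{A2ts (i)}). Where you diverge is the converse: the paper's one-line proof routes it back through Theorem \ref{A2ts (i)}(i), which means extracting the null-space condition $\mathcal{N}(X)=\mathcal{N}((WA)^*)$ from the hypotheses --- via $X=X(WAWX)=XP_{\mathcal{R}(WA)}$ one gets $\mathcal{N}(X)\supseteq\mathcal{N}(P_{\mathcal{R}(WA)})=\mathcal{N}((WA)^*)$ and the reverse inclusion from $\mathcal{N}(X)\subseteq\mathcal{N}(WAWX)$ --- and then invoking the already-proved characterization. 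You instead verify the four defining equations directly: $(2')$ and $(3')$ are immediate, $({1^{k}}')$ by the same factorization $(AW)^k=XU$ that the paper itself uses inside Theorem \ref{A2ts (i)}, and $(1')$ by right-multiplying the projector identity by $WA$ and cancelling $W$ on the left using $\mathcal{N}(W)\cap\mathcal{R}(A)=\{0\}$, which follows from $r(WA)=r(A)$. That cancellation is the same mechanism the paper uses in Theorem \ref{1,2,3 non empty} (there phrased as $A=U^*WA$), so the essential ingredient is shared; your version is self-contained and makes explicit where the rank hypothesis enters, at the cost of re-deriving what Theorem \ref{A2ts (i)} already packages. One pedantic note: Remark \ref{rem2.1} states that $r(WAW)=r(A)$ is equivalent to the conjunction $r(AW)=r(WA)=r(A)$, not to $r(WA)=r(A)$ alone; you only use the implication $r(WAW)=r(A)\Rightarrow r(WA)=r(A)$, which is what holds, so this does not affect the argument.
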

\begin{proof}
    Proof follows from Theorem $\ref{A2ts (i)}$ and Theorem $\ref{proj}$.
\end{proof}
In particular, for $m=n$ and $W=I$ in the Theorem \ref{30}, a new characterization of $\{1,2,3, 1^{k}\}$-inverse of $A$ is obtained.
\begin{cor}
    Let $A \in \mathbb{C}^{n \times n}$, such that $k= \text{ind}(A)$. Then $X \in \mathbb{C}^{n \times n}$ satisfies the following system of equations 
\begin{equation*}
    XAX=X, \ AX=P_{\mathcal{R}(A)}, \ \mathcal{R}(X) \supseteq \mathcal{R}((A)^k),
\end{equation*}
if and only if $X \in A\{1,2,3,1^{k}\}$.
\end{cor}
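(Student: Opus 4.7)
The plan is to recognize this corollary as the direct specialization of Theorem \ref{30} to the case $m = n$ and $W = I_n$, so the entire proof reduces to verifying that each hypothesis and each conclusion of Theorem \ref{30} collapses correctly under this substitution.

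First, I would observe that substituting $W = I_n$ makes the rank condition $r(WAW) = r(A)$ trivially satisfied, since it becomes $r(A) = r(A)$. Next, I would check that $\mathrm{ind}(A, W) = \max\{\mathrm{ind}(AW), \mathrm{ind}(WA)\} = \mathrm{ind}(A)$ when $W = I_n$, so the index hypothesis $k = \mathrm{ind}(A, W)$ in Theorem \ref{30} becomes exactly $k = \mathrm{ind}(A)$ as stated in the corollary. Then I would translate the three defining equations one at a time: $XWAWX = X$ becomes $XAX = X$; $WAWX = P_{\mathcal{R}(WA)}$ becomes $AX = P_{\mathcal{R}(A)}$; and $\mathcal{R}(X) \supseteq \mathcal{R}((AW)^k)$ becomes $\mathcal{R}(X) \supseteq \mathcal{R}(A^k)$.

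Finally, I would observe that under $W = I_n$ the defining equations of $A\{1', 2', 3', {1^k}'\}$ from Definition \ref{def} collapse to the Penrose equations $(1)$, $(2)$, $(3)$ together with $(1^k) \ XA^{k+1} = A^k$, which is exactly the set $A\{1, 2, 3, 1^k\}$ as introduced in the paper by Wu and Chen. Applying Theorem \ref{30} with $W = I_n$ and $m = n$ then yields the equivalence stated in the corollary.

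There is no real obstacle here: the corollary is purely a transcription of Theorem \ref{30} under a legal choice of parameters, and the only items to confirm are the identifications of $\mathrm{ind}(A, I_n)$ with $\mathrm{ind}(A)$ and of $A\{1', 2', 3', {1^k}'\}$ with $A\{1, 2, 3, 1^k\}$, both of which were already noted in the Remark following Theorem \ref{3.12}.
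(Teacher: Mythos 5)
Your proposal is correct and matches the paper exactly: the corollary is obtained by specializing Theorem \ref{30} to $m=n$ and $W=I$, which is precisely what the paper does (it states the corollary immediately after noting "for $m=n$ and $W=I$ in the Theorem \ref{30}" and gives no further argument). Your additional checks that the rank condition becomes trivial and that $\mathrm{ind}(A,I)=\mathrm{ind}(A)$ are sound and, if anything, slightly more careful than the paper's presentation.
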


The following theorem gives a characterization of the set $A\{1',2',3',{1^{k}}'\}$ involving the $W$-weighted Drazin inverse, which is further helpful in getting a representation of the $W$-weighted Drazin inverse in terms of the $\{1',2',3',{1^{k}}'\}$-inverse.
\begin{thm}
    Let $A \in \mathbb{C}^{m \times n}, \ W \in \mathbb{C}^{n \times m}$ such that $k= \text{ind}(A,W)$ and $r(WAW)=r(A)$. $X\in \mathbb{C}^{m \times n}$ satisfies
\begin{equation*}
    XWAWX=X, \  WAWX=P_{\mathcal{R}(WA)}, \ XW(AW)^k=A^{D,W}W(AW)^k,
\end{equation*}
if and only if $X \in A\{1',2',3',{1^{k}}'\}$.
\end{thm}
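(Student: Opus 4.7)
The plan is to prove both implications. For the necessity direction, assume $X \in A\{1',2',3',{1^{k}}'\}$. The identity $XWAWX = X$ is exactly equation $(2')$, and $WAWX = P_{\mathcal{R}(WA)}$ is Theorem~\ref{proj}(i). The delicate part is the third identity $XW(AW)^k = A^{D,W}W(AW)^k$. I would compute both sides separately. Using the representation $A^{D,W} = [(AW)^D]^2 A$, the right-hand side becomes $A^{D,W}W(AW)^k = [(AW)^D]^2(AW)^{k+1} = (AW)^D(AW)^k$, where the last equality uses the Drazin relation $(AW)^D(AW)^{k+1} = (AW)^k$, valid since $k \geq \text{ind}(AW)$. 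For the left-hand side, start from $({1^{k}}')$: $XW(AW)^{k+1} = (AW)^k$. Right-multiplying by $(AW)^D$ and using the commutation of $(AW)^D$ with $AW$ together with $(AW)^{k+1}(AW)^D = (AW)^k$, one obtains $XW(AW)^k = XW(AW)^{k+1}(AW)^D = (AW)^k(AW)^D = (AW)^D(AW)^k$, matching the expression for $A^{D,W}W(AW)^k$.

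For the sufficiency direction, assume the three listed conditions hold. Equation $(2')$ is immediate from the first condition, and $(3')$ holds because $P_{\mathcal{R}(WA)}$ is an orthogonal projector and hence Hermitian. For $(1')$, I would recycle the argument from the proof of Theorem~\ref{1,2,3 non empty}: the hypothesis $r(WAW) = r(A)$ yields $\mathcal{R}(A^*) = \mathcal{R}((WA)^*)$ and hence a matrix $U$ with $A = U^*WA$; then $AWXWA = U^*(WAWX)WA = U^* P_{\mathcal{R}(WA)} WA = U^*WA = A$. For $({1^{k}}')$, simply right-multiply the third condition by $AW$ to get $XW(AW)^{k+1} = A^{D,W}W(AW)^{k+1} = (AW)^k$, where the final equality is the defining equation $({1^{k}}')$ of the $W$-weighted Drazin inverse.

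The main obstacle is the forward derivation of the third condition. The equation $({1^{k}}')$ only directly gives $XW(AW)^{k+1} = (AW)^k$, so to extract an expression for $XW(AW)^k$ itself I must cancel one factor of $AW$ on the right; doing this cleanly requires invoking the Drazin commutation $(AW)^D(AW) = (AW)(AW)^D$ together with $(AW)^{k+1}(AW)^D = (AW)^k$, as well as the representation $A^{D,W} = [(AW)^D]^2 A$ to rewrite the target $A^{D,W}W(AW)^k$. Once this Drazin-inverse bookkeeping is in place, the remaining verifications are routine algebraic substitutions.
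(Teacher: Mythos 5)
Your proof is correct and follows essentially the same route as the paper's: both directions reduce to Theorem~\ref{proj} plus standard Drazin-inverse identities (via $A^{D,W}W=(AW)^D$ and $(AW)^{k+1}(AW)^D=(AW)^k$, valid since $k\geq\text{ind}(AW)$). Your converse derivation of $({1^{k}}')$ — right-multiplying $XW(AW)^k=A^{D,W}W(AW)^k$ by $AW$ and invoking the defining equation of $A^{D,W}$ — is in fact slightly more direct than the paper's chain through $(AW)^k=AWXW(AW)^k$, and you also spell out the verification of $(1')$ via $A=U^*WA$, which the paper leaves implicit.
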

\begin{proof}
     If $X \in A\{1',2',3',{1^{k}}'\}$, it is enough to show that $XW(AW)^k=A^{D,W}W(AW)^k$, in view of Theorem \ref{proj}. Now, $$XW(AW)^k=XW((AW)^{k+1}A^{D,W}W)=(AW)^kA^{D,W}W=A^{D,W}W(AW)^k.$$ Conversely, $XWAWX=X, \  WAWX=P_{\mathcal{R}(WA)}$ implies $X \in A\{1',2',3'\}$. Now, $(AW)^k{=}AWXW(AW)^k{=}AWA^{D,W}W(AW)^k{=}A^{D,W}W(AW)^kAW=XW(AW)^kAW=XW(AW)^{k+1}.$
\end{proof}
In the following theorem, a representation of $A^{D,W}$ is obtained in terms of $ \{1',2',3',{1^{k}}'\} $-inverse of $A$.
\begin{thm}

Let $A \in \mathbb{C}^{m \times n}, \ W \in \mathbb{C}^{n \times m}$ such that $k = \text{ind}(A,W)$. If $X \in A\{1',2',3',{1^{k}}'\}$ then $A^{D,W}=(XW)^{k+2}(AW)^kA$.
\end{thm}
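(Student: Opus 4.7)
The plan is to manipulate $(XW)^{k+2}(AW)^kA$ into a form that is manifestly equal to $A^{D,W}$, using two ingredients: the characterization $XW(AW)^k = A^{D,W}W(AW)^k$ (the theorem stated just above), and the easy induction on $j$, with base case $({1^k}')$, showing that $(XW)^j(AW)^{k+j} = (AW)^k$ for every $j\ge 1$.

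First I would peel one copy of $XW$ off the left of $(XW)^{k+2}(AW)^kA$ and apply the characterization together with the identity $W(AW)^kA = (WA)^{k+1}$:
\begin{equation*}
(XW)^{k+2}(AW)^kA \;=\; (XW)^{k+1}\bigl[XW(AW)^k\bigr]A \;=\; (XW)^{k+1}\,A^{D,W}(WA)^{k+1}.
\end{equation*}
Iterating the commutation relation $AWA^{D,W} = A^{D,W}WA$ from $(5')$ for $A^{D,W}$ pushes the $(WA)^{k+1}$ past $A^{D,W}$ as $(AW)^{k+1}$, giving $(XW)^{k+1}(AW)^{k+1}A^{D,W}$. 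One further application of $XW(AW)^{k+1} = (AW)^k$ collapses $(XW)^{k+1}(AW)^{k+1}$ to $(XW)^k(AW)^k$, so the problem reduces to showing $(XW)^k(AW)^kA^{D,W} = A^{D,W}$.

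To finish, I would observe that $\mathcal{R}(A^{D,W}) = \mathcal{R}((AW)^k)$: the inclusion $\supseteq$ follows from $A^{D,W}W(AW)^{k+1} = (AW)^k$, and the inclusion $\subseteq$ follows from the representation $A^{D,W} = [(AW)^D]^2A$ together with the standard equality $\mathcal{R}((AW)^D) = \mathcal{R}((AW)^k)$. Consequently every column of $A^{D,W}$ can be written in the form $(AW)^k u$, and the $j = k$ case of the induction gives
\begin{equation*}
(XW)^k(AW)^k\cdot (AW)^k u \;=\; (XW)^k(AW)^{2k}u \;=\; (AW)^k u.
\end{equation*}
Hence $(XW)^k(AW)^k$ restricts to the identity on $\mathcal{R}(A^{D,W})$, which yields $(XW)^k(AW)^kA^{D,W} = A^{D,W}$ and completes the proof.

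The most tempting dead end, which I would avoid, is to check the defining equations $({1^k}'),(2'),(5')$ of $A^{D,W}$ for $Y = (XW)^{k+2}(AW)^kA$ head-on: while $({1^k}')$ drops out of the induction, verifying $(5')$ directly produces an $AW$ sitting to the left of the $(XW)$-string, a configuration for which $({1^k}')$ offers no reduction. Invoking $XW(AW)^k = A^{D,W}W(AW)^k$ at the very first step trades this algebraic obstruction for a clean range-inclusion argument where the $W$-Drazin structure is already in place.
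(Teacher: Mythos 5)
Your proof is correct. Every step checks out: the reduction $(XW)^{k+2}(AW)^kA=(XW)^{k+1}A^{D,W}(WA)^{k+1}$ via $XW(AW)^k=A^{D,W}W(AW)^k$, the commutation $A^{D,W}(WA)^{k+1}=(AW)^{k+1}A^{D,W}$ from $(5')$, the collapse to $(XW)^k(AW)^kA^{D,W}$, the induction $(XW)^j(AW)^{k+j}=(AW)^k$, and the inclusion $\mathcal{R}(A^{D,W})\subseteq\mathcal{R}((AW)^k)$ (the only direction you actually need; the $k=0$ case is trivially covered since $(XW)^0(AW)^0=I_m$). The route differs from the paper's in a meaningful way. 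The paper works in the opposite direction: starting from the Drazin identity $A^{D,W}W=(AW)^k(A^{D,W}W)^{k+1}$, it substitutes $(AW)^k=XW(AW)^{k+1}$ repeatedly to obtain the intermediate identity $A^{D,W}W=(XW)^{k+1}(AW)^k$, then expands $A^{D,W}=(A^{D,W}W)^2A=((XW)^{k+1}(AW)^k)^2A$ and contracts the middle, again using $XW(AW)^k=A^{D,W}W(AW)^k$. That argument stays entirely inside algebraic identities of $A^{D,W}$ (commutativity of $AW$ with $A^{D,W}W$ and the idempotent-type relations), whereas you close with a range argument, importing the standard facts $A^{D,W}=[(AW)^D]^2A$ and $\mathcal{R}((AW)^D)=\mathcal{R}((AW)^k)$ to show $(XW)^k(AW)^k$ restricts to the identity on $\mathcal{R}(A^{D,W})$. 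The paper's version is more self-contained and yields the reusable byproduct $A^{D,W}W=(XW)^{k+1}(AW)^k$; yours is conceptually transparent about \emph{why} the identity holds (the string $(XW)^k(AW)^k$ is a projection acting as the identity on the relevant range) at the cost of leaning on known structure theory of the weighted Drazin inverse. Both are legitimate proofs of the statement.
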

\begin{proof} Using the properties of the $W$-weighted Drazin inverse, it follows,
\begin{equation*}
        \begin{split}
            A^{D,W}W&=(AW)^k(A^{D,W}W)^{k+1}\\
            &= XW(AW)^{k+1}(A^{D,W}W)^{k+1}\\
            &=(XW)^{k+1}(AW)^{2k+1}(A^{D,W}W)^{k+1}\\
            &=(XW)^{k+1}[(AW)^{k}(A^{D,W}W)^{k+1}(AW)^{k+1}]\\
            &= (XW)^{k+1}[(A^{D,W}W)(AW)^{k+1}]\\
            &= (XW)^{k+1}(AW)^{k}.
        \end{split}
    \end{equation*}
    Now,  $A^{D,W}=A^{D,W}WAWA^{D,W}=A^{D,W}WA^{D,W}WA=(A^{D,W}W)^2A=((XW)^{k+1}(AW)^{k})^2A$. Now, since $XW(AW)^k=A^{D,W}W(AW)^k$, hence we have $ (XW)^{k+1} (AW)^k = (A^{D,W} W)^{k+1}(AW)^k$ and 
    \begin{equation*}
    \begin{split}
        A^{D,W}&=((XW)^{k+1}(AW)^{k})^2A \\&=(XW)^{k+1}(AW)^{k}(XW)^{k+1}(AW)^{k}A \\ &=(XW)^{k+1}(AW)^{k}(A^{D,W}W)^{k+1}(AW)^{k}A\\&=(XW)^{k+1}(A^{D,W}W)(AW)^{k}A\\&= (XW)^{k+1}(XW)(AW)^{k}A\\&=(XW)^{k+2}(AW)^kA.
    \end{split}
    \end{equation*}
\end{proof}

The following theorem entails some spectral properties of $XW$, where $X \in A\{{1^{k}}'\}$.
\begin{thm}
Let $A \in \mathbb{C}^{m \times n}, \ W \in \mathbb{C}^{n \times m}$ such that $k= \text{ind}(A,W)$. If $X \in A\{{1^{k}}'\}$, then any eigenvector of $AW$ with respect to the non-zero eigenvalue $\lambda$, is an eigenvector of $XW$ with respect to the eigenvalue $1/{\lambda}$.
\end{thm}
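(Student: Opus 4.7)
The plan is to exploit the defining equation of the $\{{1^{k}}'\}$-inverse, namely $XW(AW)^{k+1}=(AW)^{k}$, together with the fact that eigenvectors of $AW$ are also eigenvectors of every polynomial in $AW$.

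First, I would let $v$ be an eigenvector of $AW$ associated with a nonzero eigenvalue $\lambda$, so that $(AW)v=\lambda v$. By a trivial induction on the exponent, this yields $(AW)^{j}v=\lambda^{j}v$ for every nonnegative integer $j$; in particular, $(AW)^{k+1}v=\lambda^{k+1}v$ and $(AW)^{k}v=\lambda^{k}v$.

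Next, I would apply the identity $XW(AW)^{k+1}=(AW)^{k}$ from the definition of a $\{{1^{k}}'\}$-inverse (cf.\ Definition~\ref{def}) to the vector $v$. The left-hand side becomes $\lambda^{k+1}(XW)v$, while the right-hand side becomes $\lambda^{k}v$. Hence
\begin{equation*}
\lambda^{k+1}(XW)v=\lambda^{k}v.
\end{equation*}
Since $\lambda\neq 0$, dividing both sides by $\lambda^{k+1}$ gives $(XW)v=(1/\lambda)\,v$, which is exactly the claim.

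There is no real obstacle here: the proof is a one-line consequence of the defining equation and the polynomial-eigenvector principle. The only mild subtlety worth stating explicitly is why $\lambda^{k+1}\neq 0$, which is immediate from $\lambda\neq 0$, ensuring that we can legitimately cancel it to recover $XWv=\lambda^{-1}v$.
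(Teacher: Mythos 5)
Your proposal is correct and is essentially the same argument as the paper's: both rely on $(AW)^{j}v=\lambda^{j}v$ and the defining identity $XW(AW)^{k+1}=(AW)^{k}$, then cancel the nonzero factor $\lambda^{k+1}$. The only difference is cosmetic — the paper rewrites $v=(1/\lambda)^{k+1}(AW)^{k+1}v$ before applying $XW$, whereas you apply the identity to $v$ first and divide afterwards.
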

\begin{proof}
    Let $v$ be the eigenvector of $AW$ with respect to the non-zero eigenvalue $\lambda$. Then
    \begin{equation*}
    \begin{split}
        XW v &= XW\left( \left( \frac{1}{\lambda} \right)^{k+1}{\lambda}^{k+1}v\right)=XW \left( \frac{1}{\lambda} \right)^{k+1}{(AW)}^{k+1}v \\&= \left( \frac{1}{\lambda} \right)^{k+1}XW(AW)^{k+1}v=\left( \frac{1}{\lambda} \right)^{k+1}(AW)^{k}v=\left( \frac{1}{\lambda} \right)v.
    \end{split}
    \end{equation*}
\end{proof}
The following theorem gives a relation on invariant subspaces of $AW$ and $XW$ for $A  \in \mathbb{C}^{m \times n}, \ W \in \mathbb{C}^{n \times m}$ and $X \in A\{1',2',3',{1^{k}}'\}$.
\begin{thm}
Let $A,X \in \mathbb{C}^{m \times n}, \ W \in \mathbb{C}^{n \times m}$ such that $k= \text{ind}(A,W)$ and $X \in A\{1',2',3'\}$. Then $X \in A\{1',2',3',{1^{k}}'\}$ if and only if $AW \mathcal{S} = \mathcal{S} $ implies $XW\mathcal{S}=\mathcal{S}$ for any subspace $\mathcal{S}$ of $\mathbb{C}^m$.
\end{thm}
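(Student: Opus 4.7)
The plan is to prove both directions by choosing subspaces on which $AW$ acts bijectively, and then leveraging the identity $XWAWX = X$ that already comes with the assumption $X \in A\{1',2',3'\}$.

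For the \emph{necessity} direction, I would assume $X \in A\{1',2',3',{1^{k}}'\}$ and suppose $AW\mathcal{S} = \mathcal{S}$ for a subspace $\mathcal{S} \subseteq \mathbb{C}^{m}$. Since $\mathcal{S}$ is finite-dimensional, $AW|_{\mathcal{S}} \colon \mathcal{S} \to \mathcal{S}$ is surjective and hence bijective; consequently $(AW)^{k}|_{\mathcal{S}}$ and $(AW)^{k+1}|_{\mathcal{S}}$ are bijections of $\mathcal{S}$ onto itself. I would then apply the defining equation $XW(AW)^{k+1} = (AW)^{k}$ in two directions: given $s \in \mathcal{S}$, write $s = (AW)^{k+1} t$ with $t \in \mathcal{S}$ to get $XWs = (AW)^{k} t \in \mathcal{S}$, proving $XW\mathcal{S} \subseteq \mathcal{S}$; and write $s = (AW)^{k} t'$ with $t' \in \mathcal{S}$ to get $s = XW(AW)^{k+1} t' = XW(AWt') \in XW\mathcal{S}$, proving $\mathcal{S} \subseteq XW\mathcal{S}$.

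For the \emph{sufficiency} direction, the critical step is identifying the right candidate for $\mathcal{S}$: I would take $\mathcal{S} = \mathcal{R}((AW)^{k})$. Since $k = \text{ind}(A,W) \geq \text{ind}(AW)$, one has $\mathcal{R}((AW)^{k+1}) = \mathcal{R}((AW)^{k})$, which immediately yields $AW\mathcal{S} = \mathcal{S}$. The hypothesis then gives $XW\mathcal{S} = \mathcal{S}$, and therefore $\mathcal{R}((AW)^{k}) \subseteq \mathcal{R}(XW) \subseteq \mathcal{R}(X)$. The equation $XWAWX = X$ from $X \in A\{2'\}$ says precisely that $XW \cdot AW$ acts as the identity on $\mathcal{R}(X)$, and in particular on $\mathcal{R}((AW)^{k})$. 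Applied to $z = (AW)^{k} y$ for arbitrary $y \in \mathbb{C}^{n}$, this gives $XW(AW)^{k+1} y = XW \cdot AW \cdot (AW)^{k} y = (AW)^{k} y$, which is exactly equation $({1^{k}}')$.

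The main obstacle is the sufficiency direction: there is no distinguished subspace to start from, so the insight is that $\mathcal{R}((AW)^{k})$ is the natural core-like invariant subspace on which $AW$ is stable. Recognizing that the range inclusion $\mathcal{R}((AW)^{k}) \subseteq \mathcal{R}(X)$ coming from $XW\mathcal{S} = \mathcal{S}$ can be upgraded to the pointwise identity $XW(AW)^{k+1} = (AW)^{k}$ is the crucial observation, and it is made possible precisely because $X$ already satisfies the outer-inverse relation $XWAWX = X$.
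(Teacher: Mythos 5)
Your proof is correct and follows essentially the same route as the paper: the necessity direction uses $\mathcal{S}=(AW)^{k+1}\mathcal{S}$ together with $XW(AW)^{k+1}=(AW)^k$, and the sufficiency direction applies the hypothesis to $\mathcal{S}=\mathcal{R}((AW)^k)$ and then exploits $XWAWX=X$ (the paper writes $(AW)^k=XW(AW)^kU$ and computes, which is the same observation that $XWAW$ fixes $\mathcal{R}((AW)^k)\subseteq\mathcal{R}(X)$). The only blemish is the notational slip ``$XW(AWt')$'' where you mean $XW\bigl((AW)^{k+1}t'\bigr)$; the logic is unaffected.
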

\begin{proof}
    Let $X \in A\{1',2',3',{1^{k}}'\}$ for $\mathcal{S} \subseteq  \mathbb{C}^{m}$ such that $AW\mathcal{S} = \mathcal{S}$. Then 
    \begin{equation*}
        \mathcal{S} =AW\mathcal{S} = (AW)^2\mathcal{S}=\cdots=(AW)^k\mathcal{S}=(AW)^{k+1}\mathcal{S}.
    \end{equation*}
    Now,
    \begin{equation*}
        XW\mathcal{S}=(XW)(AW)^{k+1}\mathcal{S}=(AW)^k\mathcal{S}=\mathcal{S}.
    \end{equation*}
    Conversely, let $AW\mathcal{S}=\mathcal{S}$ implies $XW\mathcal{S}=\mathcal{S}$. Since $AW\mathcal{R}((AW)^k)=\mathcal{R}((AW)^{k+1})=\mathcal{R}((AW)^k)$ and by hypothesis, $XW\mathcal{R}((AW)^k)=\mathcal{R}((AW)^k)$. Hence there exist some $U \in \mathbb{C}^{m \times m}$ such that  $(AW)^k=XW(AW)^kU$ , so
    \begin{equation*}
        \begin{split}
            (XW)(AW)^{k+1} &=XWAW(AW)^k
            = XWAWXW(AW)^kU\\&=XW(AW)^kU=(AW)^k.
        \end{split}
    \end{equation*}
    Hence $X \in A\{1',2',3',{1^{k}}'\} $.
\end{proof}

\backmatter
\section*{Statements and Declarations}

\bmhead{Funding} No funding has been provided for this research.
\bmhead{Competing Interests} The authors have no competing interets to disclose.

\bmhead{Author Contributions} Both authors contributed sufficiently to the work.

% \noindent
% If any of the sections are not relevant to your manuscript, please include the heading and write `Not applicable' for that section. 

%%===========================================================================================%%
%% If you are submitting to one of the Nature Portfolio journals, using the eJP submission   %%
%% system, please include the references within the manuscript file itself. You may do this  %%
%% by copying the reference list from your .bbl file, paste it into the main manuscript .tex %%
%% file, and delete the associated \verb+\bibliography+ commands.                            %%
%%===========================================================================================%%

%\bibliography{snbibliography}  %common bib file
%% if required, the content of .bbl file can be included here once bbl is generated
%%\input sn-article.bbl

\end{document}